\newcommand{\R}{\mathbb{R}}
\newcommand{\cS}{{\cal S}}
\newcommand{\cE}{{\cal E}}
\newcommand{\cT}{{\cal T}}
\newcommand{\trace}{\mbox{\rm trace}}
\newcommand{\laura}{}
\def\noprint#1{}
\def\swresolved#1{}
\def\bi{\begin{itemize}}
\def\ei{\end{itemize}}
\def\beq{\begin{equation}}
\def\eeq{\end{equation}}
\def\eqnok#1{(\ref{#1})}
\title{Local Convergence of an Algorithm for Subspace Identification from Partial Data}
\author{Laura Balzano\thanks{Department of Electrical Engineering and  Computer Science, University of Michigan. \texttt{girasole@umich.edu}}%
\and Stephen J. Wright\thanks{Department of Computer Sciences, University of Wisconsin-Madison. \texttt{swright@cs.wisc.edu}}}
\begin{document}
%
\maketitle
\begin{abstract}
GROUSE (Grassmannian Rank-One Update Subspace Estimation) is an
iterative algorithm for identifying a linear subspace of $\R^n$ from
data consisting of partial observations of random vectors from that
subspace. This paper examines local convergence properties of GROUSE,
under assumptions on the randomness of the observed vectors, the
randomness of the subset of elements observed at each iteration, and
incoherence of the subspace with the coordinate
directions. Convergence at an expected linear rate is demonstrated
under certain assumptions. The case in which the full random vector is
revealed at each iteration allows for much simpler analysis, and is
also described.  GROUSE is related to incremental SVD methods and to
gradient projection algorithms in optimization.
\end{abstract}
\begin{keywords}
  Subspace Identification, Optimization
\end{keywords}

\nocite{Gross09b}
\nocite{RechtImprovedMC09}
\nocite{SteS90}

\section{Introduction} \label{sec:intro}

We seek to identify an unknown subspace $\cS$ of dimension $d$ in
$\R^n$, described by an $n \times d$ matrix $\bar{U}$ whose
orthonormal columns span $\cS$. Our data consist of a sequence of
vectors $v_t$ of the form
\beq \label{eq:vt}
v_t = \bar{U} s_t,
\eeq
where $s_t \in \R^d$ is a random vector whose elements are independent
and identically distributed (i.i.d.) in $\mathcal{N}(0,1)$.
Critically, we observe only a subset $\Omega_t \subset
\{1,2,\dots,n\}$ of the components of $v_t$.

GROUSE \cite{Bal12,BalRN10b} (Grassmannian Rank-One Update Subspace
Estimation) is an algorithm that generates a sequence $\{ U_t
\}_{t=0,1,\dotsc}$ of $n \times d$ matrices with orthonormal columns
with the goal that $R(U_t) \to \cS$ (where $R(\cdot)$ denotes
range). Partial observation of the vector $v_t$ is used to update
$U_t$ to $U_{t+1}$. We present GROUSE (slightly modified from earlier
descriptions) as Algorithm~\ref{grouse:partial}.

\subsection{Applications of Subspace Identification}

Subspace identification problems arise in a great variety of
applications. They are the simplest form of the more general class of
problems in which we seek to identify a low-dimensional manifold in a
high-dimensional ambient space from a sequence of incomplete
observations.  Subspace identification finds applications in
medical~\cite{Ardekani99} and hyperspectral~\cite{Manolakis02}
imaging, communications~\cite{Tong98multichannel}, source localization
and target tracking in radar and sonar~\cite{KrimViberg}, computer
vision for object tracking~\cite{Costeira98}, and in control for
system identification~\cite{vandenberghe12,sysid1996}, where one is
interested in estimating the range space of the observability matrix
of a system. Subspaces have also been used to represent images of a
single scene under varying illuminations~\cite{Basri03} and to model
origin-destination flows in a computer network~\cite{lakhina04}.
Environmental monitoring of soil and crop conditions~\cite{gupchup07},
water contamination~\cite{papadimitriou05}, and seismological
activity~\cite{Wagner96} can all be summarized efficiently by
low-dimensional subspace representations.

\subsection{GROUSE} \label{sec:grouse}

Each iteration of the GROUSE algorithm
(Algorithm~\ref{grouse:partial}) essentially performs a gradient
projection step onto the Grassmannian manifold of subspaces of
dimension $d$, based on the latest partially observed sample
$[v_t]_{\Omega_t}$ of the random vector $v_t \in \cS$. In this
description, we use $[U]_{\Omega_t}$ to denote the row submatrix of the
$n \times d$ matrix $U$ corresponding to the index set
$\Omega_t \subset \{1,2,\dotsc,n\}$. Similarly, $[z]_{\Omega_t}$ denotes
the subvector of elements of $z \in \R^n$ corresponding to elements
of $\Omega_t$. We use $| \Omega_t|$ to denote cardinality of the set
$\Omega_t$ and $\Omega_t^c$ to denote the complement $\{ 1,2,\dotsc,n\}
\setminus \Omega_t$.

\begin{algorithm}
\caption{GROUSE: Partial Data} \label{grouse:partial}
\begin{algorithmic}
\STATE{Given $U_0$, an $n \times d$ matrix with orthonormal columns, with $0<d<n$;}
\STATE{Set $t:=1$;}
\REPEAT 
\STATE{Draw a random subset $\Omega_t \subset \{1,2,\dotsc,n\}$ and
 and observe  $[v_t]_{\Omega_t}$ where $v_t \in \cS$;}
\IF{the eigenvalues of $[U_t]_{\Omega_t}^T[U_t]_{\Omega_t}$ lie 
in the range $[0.5|\Omega_t|/n, 1.5|\Omega_t|/n]$}
\STATE{Define $w_t := \arg \min_w \|[U_t]_{\Omega_t} w - [v_t]_{\Omega_t} \|_2^2$;}
\STATE{Define $p_t := U_t w_t$; 
$[r_t]_{\Omega_t} := [v_t]_{\Omega_t}-[p_t]_{\Omega_t}$; 
$[r_t]_{\Omega_t^c} := 0$;
$\sigma_t:= \|r_t\| \, \|p_t\|$;}
\STATE{Choose $\eta_t>0$ and set}
\begin{equation} \label{eq:gpupdate}
U_{t+1} := U_t + \left[ \left(\cos (\sigma_t \eta_t)-1 \right) \frac{p_t}{\|p_t\|} + \sin(\sigma_t \eta_t) \frac{r_t}{\|r_t\|} \right] 
\frac{w_t^T}{\|w_t\|}.
\end{equation}
\ENDIF
\STATE{$t:=t+1$;}
\UNTIL{termination}
\end{algorithmic}
\end{algorithm}

This description in Algorithm~\ref{grouse:partial} differs from that of~\cite{BalRN10b} only in that the following
condition is required for the eigenvalues of
$[U_t]_{\Omega_t}^T[U_t]_{\Omega_t}$:
\beq \label{step:checkCTC}
\lambda_i ([U_t]_{\Omega_t}^T[U_t]_{\Omega_t}) \in \left[ .5 \frac{|\Omega_t|}{n},
1.5\frac{|\Omega_t|}{n}\right], \quad i=1,2,\dotsc,d,
\eeq
where $\lambda_i(\cdot)$ denotes the $i$th eigenvalue (in decreasing
order). A consequence is that
\beq \label{eq:CTCbound}
\| ([U_t]_{\Omega_t}^T[U_t]_{\Omega_t})^{-1} \| \le \frac{2n}{|\Omega_t|}.
\eeq
As we see later in Theorem~\ref{th:sampledsingvals}, this condition
ensures that the sample $\Omega_t$ is such that $[v_t]_{\Omega_t}$
captures useful information about $\cS$; if it is not satisfied, the
weight vector $w_t$ may not accurately reflect how the latest
observation $[v_t]_{\Omega_t}$ is explained by the current basis
vectors (the columns of $[U_t]_{\Omega_t}$).  Since we need to factor
the matrix $[U_t]_{\Omega_t}$ in order to calculate $w_t$, and since
we have typically that $d \ll n$, the marginal cost of determining or
estimating the singular values of $[U_t]_{\Omega_t}$ and checking the
condition \eqnok{eq:CTCbound} is not excessive. We show in our
analysis that the condition \eqnok{step:checkCTC} is satisfied at most
iterations.

We note several elementary facts about the vector quantities that
appear in GROUSE. Let $P_{R(\cdot)}$ denote the projection operator
onto the range, and $P_{N(\cdot)}$ denote the projection onto the
nullspace of a matrix. Since
\[
[p_t]_{\Omega_t} = P_{R([U_t]_{\Omega_t})}([v_t]_{\Omega_t}), \quad
[r_t]_{\Omega_t} = P_{N([U_t]_{\Omega_t}^T)}  ([v_t]_{\Omega_t}),
\]
we have that
\beq \label{eq:pr}
p_t^T r_t = [p_t]_{\Omega_t}^T [r_t]_{\Omega_t} =0
\eeq
and 
\beq \label{eq:ptr}
\| p_t + r_t\|^2 = \| p_t\|^2 + \| r_t \|^2.
\eeq
By orthonormality of the columns of $U_t$, we  also have that 
\beq \label{eq:pw}
\| p_t \| = \|w_t\|.
\eeq

\subsection{GROUSE in Context} \label{sec:context}

The derivation of GROUSE as a stochastic gradient algorithm on the
Grassmannian manifold can be found in \cite{BalRN10b}, along with a
discussion of its relationship to matrix completion. In this
subsection, we discuss several other aspects of GROUSE's convergence
behavior, focusing on the
regime in which the iterates $U_t$ are close to identifying the
correct subspace $\cS$, so that $\|r_t \| \ll \| p_t \|$. We assume
that the steplength $\eta_t$ is chosen to satisfy
\beq \label{eq:eta.choice.nofudge} 
\sin \sigma_t \eta_t = \frac{\|
  r_t\|}{\|p_t\|}.  
\eeq
Since $1-\cos \sigma_t \eta_t = O(\|r_t\|^2/\|p_t \|^2)$, by
multiplying both sides of \eqnok{eq:gpupdate} by $w_t$, and using
\eqnok{eq:pw}, we have that
\[
U_{t+1} w_t = U_t w_t + \frac{\| r_t\|}{\|p_t\|} \frac{r_t}{\|r_t\|}
\frac{w_t^Tw_t}{\|w_t\|} + O\left( \frac{\|r_t\|^2}{\|p_t \|^2} \right) =
p_t+r_t +  O\left( \frac{\|r_t\|^2}{\|p_t \|^2} \right).
\]
It follows from the definition of $r_t$ that
\begin{subequations} \label{eq:Uw}
\begin{align}
[U_{t+1} w_t]_{\Omega_t} & \approx   [p_t+r_t]_{\Omega_t} = [v_t]_{\Omega_t}, \\
[U_{t+1} w_t]_{\Omega_t^c} & \approx   [p_t+r_t]_{\Omega_t^c} =
[U_t w_t]_{\Omega_t^c},
\end{align}
\end{subequations}
where $\Omega_t^c := \{1,2,\dotsc,n\} \setminus \Omega_t$. Moreover,
in any direction $z$ orthogonal to $w_t$, we have $U_{t+1} z = U_t
z$. Thus, the update \eqnok{eq:gpupdate} has the effect of
(approximately) matching the newly revealed information
$[v_t]_{\Omega_t}$ along the direction $w_t$, while leaving the values
of $U_t w_t$ almost unchanged in the non-revealed components
$\Omega_t^c$, and making no change at all in the remaining
$(d-1)$-dimensional subspace $\{ z \, | \, w_t^Tz=0\}$. In this sense,
\eqnok{eq:gpupdate} is a ``least-change'' update, leaving the current
iterate $U_t$ undisturbed as far as possible, but making just enough
of a change to match the new information. The least-change strategy is
key to the development of quasi-Newton methods for optimization
\cite[Chapter~6]{NocW06}, in which low-rank, least-change updates are
made to approximate Hessian matrices, to match the curvature
information gained in each step.

The relationship of GROUSE to gradient projection becomes clearer when
we define the following measure of inconsistency between $R(U_t)$ and
$\cS$, based on the information revealed in $[v_t]_{\Omega_t}$:
\[
\cE(U_t) := \min_w \| [U_t]_{\Omega_t} w - [v_t]_{\Omega_t} \|_2^2.
\]
It can be shown that
\[
\frac{d \cE}{d U_t}  = -2r_t w_t^T.
\]
With the choice \eqnok{eq:eta.choice.nofudge} of $\eta_t$, we have
from \eqnok{eq:gpupdate} that
\[
U_{t+1} = U_t + \frac{1}{\|p_t\|^2} r_t w_t^T + O\left(
\frac{\|r_t\|^2}{\|p_t \|^2} \right),
\]
so that the GROUSE step is a step in the negative gradient direction
for $\cE$, projected onto the space of $n \times d$
matrices with orthonormal columns.

GROUSE is related too to an incremental singular value decomposition
(ISVD) approach that maintains an approximation $U_t$ with orthonormal
columns, and iterates  in the following way. First, the new random
vector $v_t$ is appended to $U_t$ to form an $n \times (d+1)$ matrix,
with missing elements of $v_t$ ``imputed'' from the current estimate
$U_t$ and the weight vector $w_t$ obtained as in GROUSE. Second, the
SVD of this expanded matrix is computed, and the first $d$ columns of
its left factor (an $n \times (d+1)$ matrix with orthonormal columns)
are taken as the new iterate $U_{t+1}$. (The final column is
discarded.) It is shown in \cite{grouseisvd} that for a certain choice
of steplength parameter $\eta_t$ in GROUSE, the ISVD and GROUSE
algorithms are equivalent.

In our analysis below, we use the following generalization of
\eqnok{eq:eta.choice.nofudge} for the choice of $\eta_t$:
\beq \label{eq:eta.choice}
\sin \sigma_t \eta_t = \alpha_t \frac{\| r_t\|}{\|p_t\|},
\eeq
where $\alpha_t \in (0,2)$ is a user-defined ``fudge factor.'' We show
that the best asymptotic results are obtained by setting $\alpha_t
\equiv 1$.

\subsection{Summary of Results} \label{sec:discussion}

Our main result is expected local linear convergence of the sequence
of subspaces $\{ R(U_t)\}$ to $\cS$.  This section outlines the
assumptions needed to prove our result and discusses their relevance
to computational experience.

We recall the assumption that the observation vector $v_t$ has the
form $\bar{U} s_t$ \eqnok{eq:vt}, with the elements of $s_t$ being
i.i.d. normal with zero mean and identical variance. We assume too
that the set $\Omega_t$ of observed elements of $v_t$ is chosen
independently at each iteration.

The discrepancy between the $d$-dimensional subspaces $R(U_t)$ and
$\cS$ is measured in terms of the $d$ {\em principal angles} between
these subspaces, $\phi_i (U_t,\bar{U})$
\cite[Chapter~5]{SteS90}, which are defined by
\beq \label{eq:defphi} 
\cos \phi_i
(U_t,\bar{U}) = \sigma_i (\bar{U}^TU_t), \;\; i=1,2,\dotsc,d, 
\eeq
where $\sigma_i (\bar{U}^TU_t)$, $i=1,2,\dotsc,d$ are the singular
values of $U_t^T\bar{U}$. The quantity $\epsilon_t$ defined by
\beq \label{eq:epst}
\epsilon_t := \sum_{i=1}^d \sin^2 \phi_i(\bar{U},U_t) =
 \sum_{i=1}^d (1-\sigma_i^2(\bar{U}^TU_t)) =
d- \| \bar{U}^TU_t \|_F^2
\eeq
is central to our analysis. 
%
%
%
%
We show that for small $\epsilon_t$, we have
\beq \label{eq:impr}
\epsilon_{t+1}  \approx \epsilon_t - \frac{\|r_t\|^2}{\|w_t \|^2},
\eeq
and that the expected value of the decrease $\|r_t\|^2/\|w_t\|^2$ is
bounded below by a small multiple of $\epsilon_t$, provided that the
eigenvalue check \eqnok{step:checkCTC} is satisfied. (Higher-order
terms complicate the analysis considerably.)

A critical assumption, made precise below, is {\em incoherence} of the
subspace $\cS$ with respect to the coordinate directions. Concepts of
incoherence have been well studied in the context of compressed
sensing (see for example \cite{CanR07a}).  If $\cS$ were to align
closely with one or two principal axes, then observation subsets
$\Omega_t$ that did not include the corresponding index would be
missing important information about $\cS$. We would need to choose
larger sample sets $\Omega_t$ (of size $|\Omega_t|$ related to $n$), or
to take many more iterations, in order to have a good chance of
capturing the components of $v_t$ that align with $\cS$. 

Our analysis requires another kind of incoherence too. We assume that
the error in $U_t$ revealed by the observation vector --- the part of
$v_t$ that is {\em not} explained by the current iterate $U_t$ --- is
usually incoherent with respect to the coordinate directions. (Our
computations indicate that such is the case.)  This incoherence
measure is denoted by $\mu(x_t)$, where $x_t := (I-U_t U_t^T) v_t$,
and our assumption on this quantity is spelled out in
Lemma~\ref{lem:paren}.

High-probability results play a key role in the analysis. Our lower
bound on the quantity $\|r_t\|^2/\|w_t\|^2$ in \eqnok{eq:impr}, for
instance, is not proved to hold at every iteration but only at a
substantial majority of iterations. In fact, it is possible that
$\epsilon_{t+1} > \epsilon_t$ for some $t$; the sequence $\{
\epsilon_t \}$ may not decrease monotonically.

{\em We state at the outset that the expected linear convergence
  behavior is proved to hold in only a limited regime}, that is, the
main theorem requires $\epsilon_t$ to be quite small and each
$|\Omega_t |$ to be \laura{on the order of $d (\log d) (\log^2 n)$} in
order for the claimed linear rate to be observed. This requirement
on observations is only $\log d$ greater than what is required for
batch matrix completion algorithms~\cite{RechtImprovedMC09}. 
The linear convergence rate observed in computational experiments is,
roughly speaking, a factor of $(1-Xq/(nd))$ per iteration, where $q$
is a lower bound on $|\Omega_t|$ and $X$ is some number not too much
less than one. We see in Section~\ref{sec:computations} that this rate
appears to hold in a much wider regime than the analysis would
strictly predict, \laura{both for much smaller $|\Omega_t|$ and for
  much larger $\epsilon_t$.}  In fact, the same ``gap'' between theory
and practice of local convergence is seen in many optimization
algorithms.
We point out too that the mismatch largely disappears in the full-data
case, where $\Omega_t = \{1,2,\dotsc,n\}$ for all $t$.
In this case, the theoretical restrictions on $\epsilon_t$ are mild,
incoherence is irrelevant, and the predicted convergence behavior
matches closely the computational observations.

\subsection{Outline} \label{sec:outline}

Section~\ref{sec:convergence} contains the proof of our claim of
expected linear convergence. This long section is broken into
subsections, with a ``roadmap'' given at the start.
Section~\ref{sec:full} analyzes the full-data case in which $\Omega_t
\equiv \{1,2,\dotsc,n\}$. Many of the complications of the general
case vanish here, but the specialized analysis holds some interest and
convergence still occurs only in an expected sense, because of the
random nature of the observation vectors $v_t$.

\subsection*{Notation}

As noted earlier, we use $N(\cdot)$ to denote the null space (kernel)
of a matrix and $P_{\cT}$ to denote projection onto a subspace $\cT$.

The notation $\|\cdot \|$ (without subscript) on either vector or
matrix indicates $\| \cdot \|_2$. Recall that the Frobenius norm is
related to $\| \cdot \|_2$ by the following inequalities:
\[
\| A \|_2 \le \| A\|_F \le \sqrt{r} \| A\|_2,
\]
where $r$ is the rank of $A$. We note too that the norms $\| \cdot
\|_2$ and $\| \cdot \|_F$ are invariant under orthogonal
transformations of the matrix argument.

We drop the subscripts $t$ frequently during the paper, when it causes
no confusion to do so, and reminding the reader of this practice where
appropriate.

\section{Expected Linear Convergence} \label{sec:convergence}




We develop the local convergence results for GROUSE in this
section. The analysis is surprisingly technical for such a simple
method, so we break the exposition into relatively short
subsections. We give a brief outline of our proof strategy here.

Subsection~\ref{sec:ett} obtains a lower-bounding expression for the
improvement in the measure $\epsilon_t$ \eqnok{eq:epst} made over a
single step. This bound involves three different quantities, and the rest of the 
paper focuses on controlling each of them. Subsection~\ref{sec:UtUbar} shows that the Frobenius-norm
difference between $U_t$ and $\bar{U}$ can be bounded above and below
by multiples of $\epsilon_t$. Subsection~\ref{sec:samp} examines some
consequences of the fact that only a subset $\Omega_t$ of the elements
of $v_t$ is revealed at each iteration.  This subsection introduces an
assumed lower bound $q$ on the cardinality of $\Omega_t$, and obtains
bounds on $\| r_t\|$ and $\|p_t \|$ (and their ratio) in terms of the
norm of the vector $s_t$ from \eqref{eq:vt}.

Subsection~\ref{sec:UpUr} examines a particular term $(\bar{U}^Tp_t)^T
(\bar{U}^T r_t)$ that appears in the lower-bounding expression for
$\epsilon_t-\epsilon_{t+1}$ obtained earlier in
Subsection~\ref{sec:ett}, deriving bounds for this quantity in terms
of $\epsilon_t$, $\|p_t \|$, and $\|r_t \|$. These bounds are used in
Subsection~\ref{sec:epst1} to make the results of
Subsection~\ref{sec:ett} more precise. 

Subsection~\ref{sec:coherence} defines the concept of coherence used
in this paper, and uses a measure concentration result to show that
the eigenvalue condition \eqnok{step:checkCTC} is satisfied on most
iterations. Subsection~\ref{sec:lb} proves a high-probability bound
for the ratio $\| r_t\|^2/\|p_t\|^2$, which is the dominant term in
the error improvement $\epsilon_t - \epsilon_{t+1}$. This bound is
given in terms of the angle $\theta_t$ that is the angle between
$R(U_t)$ and $\cS$ that is revealed by the (full) random observation
vector $v_t$. Subsection~\ref{sec:expsin} shows that the expected
value of $\sin^2 \theta_t$ is $\epsilon_t/d$.  Finally,
Subsection~\ref{sec:explin} puts the pieces together, proving expected
linear convergence rate by combining bounds for the ``good''
iterations with those for the ``anomalous'' iterations, where the
latter category includes those for which the update is skipped because
condition \eqref{step:checkCTC} fails to hold.

\subsection{A Bound for $\epsilon_t-\epsilon_{t+1}$} \label{sec:ett}

In this subsection, we obtain an expression for
$\epsilon_{t+1}-\epsilon_t$, where $\epsilon_t$ is the quantity
defined in \eqnok{eq:epst}. We deal mostly with the case in which a
step is actually taken by the algorithm, that is, condition
\eqref{step:checkCTC} holds. (If such is not the case, we have
trivially that $\epsilon_{t+1}=\epsilon_t$.)  We start by defining the
$d \times d$ orthogonal matrix $W_t$ as
\beq \label{eq:def.Wt}
W_t := \left[ \frac{w_t}{\|w_t\|}  \, | \, Z_t \right],
\eeq
where $Z_t$ is a $d \times (d-1)$ matrix with orthonormal columns
whose columns span $N(w_t^T)$. It is clear that the first column of
$U_t W_t$ is
\[
\frac{U_t w_t}{\| w_t\|} = \frac{p_t}{\| p_t \|}.
\]
Let us now write the update formula \eqnok{eq:gpupdate} as follows
\begin{align} 
\label{eq:gpupdate.y}
U_{t+1} & := U_t +  \left[ \frac{y_t}{\| y_t \|} - 
\frac{p_t}{\| p_t \|} \right] \frac{w_t^T}{\|w_t\|}, \\
\label{eq:def.y}
\mbox{where} \;\; \frac{y_t}{\| y_t \|} & := 
\cos (\sigma_t \eta_t) \frac{p_t}{\|p_t\|} + 
\sin (\sigma_t \eta_t) \frac{r_t}{\|r_t\|}.
\end{align}
By using a trigonometric identity together with \eqnok{eq:pr}, we can
see that the right-hand side of \eqnok{eq:def.y} has unit norm. From
\eqnok{eq:gpupdate.y}, we have
\begin{align*}
U_{t+1} W_t &= U_t W_t + \left[ \frac{y_t}{\| y_t \|} - 
\frac{p_t}{\| p_t \|} \right] \frac{w_t^T}{\|w_t\|} W_t \\
&= U_t W_t + \left[ \frac{y_t}{\| y_t \|} - 
\frac{p_t}{\| p_t \|} \right] 
\left[ \begin{matrix} 1 & 0 & 0 & \ldots & 0 \end{matrix} \right],
\end{align*}
where $y_t$ is defined in \eqnok{eq:def.y}.  Thus, the update has the
effect of replacing the first column $p_t/\|p_t\|$ of $U_t W_t$ by
$y_t/\|y_t\|$, and leaving the other columns unchanged. Recalling that
the Frobenius norm is invariant under orthogonal transformations,
using \eqnok{eq:epst} and \eqnok{eq:def.y}, and dropping the subscript
$t$ freely on scalars and vectors, we obtain
\begin{align}
\nonumber
\epsilon_t & -\epsilon_{t+1} = \| \bar{U}^T U_{t+1} \|_F^2 - \|
\bar{U}^T U_t \|_F^2 \\
\nonumber
& = \| \bar{U}^T U_{t+1} W_t \|_F^2 - \| \bar{U}^T
U_t W_t \|_F^2 \\
\nonumber
&= \left\|\cos(\sigma \eta) \frac{\bar{U}^T p}{\|p\|} + \sin(\sigma
\eta) \frac{\bar{U}^Tr}{\|r\|}\right\|_2^2 - \left\|
\frac{\bar{U}^Tp}{\|p\|}\right\|_2^2  \\
\nonumber
&=(\cos^2(\sigma \eta)-1) \frac{\|\bar{U}^Tp\|^2}{\|p\|^2}+
2\cos(\sigma \eta) \sin(\sigma \eta)
\frac{(\bar{U}^Tp)^T(\bar{U}^Tr)}{\|p\|\|r\|} + \sin^2(\sigma \eta)
\frac{\|\bar{U}^Tr\|^2}{\|r\|^2}\\
\nonumber
\nonumber
&=\sin^2(\sigma \eta) \left(
\frac{\|\bar{U}^Tr\|^2}{\|r\|^2}-\frac{\|\bar{U}^Tp\|^2}{\|p\|^2}\right)
+ \sin(2\sigma\eta)
\frac{(\bar{U}^Tp)^T(\bar{U}^Tr)}{\|p\|\|r\|} \\
\label{eq:Udecrease.2} 
& \ge -\sin^2 (\sigma \eta)  + \sin(2\sigma\eta)
\frac{(\bar{U}^Tp)^T(\bar{U}^Tr)}{\|p\|\|r\|},
\end{align}
where the final inequality follows from $\| \bar{U}^Tp \| \le \|p \|$
(since the columns of $\bar{U}$ are orthonormal) and $\|\bar{U}^Tr\|^2 / \|r\|^2 \ge
0$.  Choosing $\eta_t$ so that \eqnok{eq:eta.choice} is satisfied, we
have from  $\sin^2 (\sigma \eta) \in [0,1]$ and for any scalar $\beta$ that
\[
\beta \sqrt{1-\sin^2 (\sigma \eta)} \ge \beta - |\beta| \sin^2 (\sigma \eta),
\]
 and thus by substituting \eqnok{eq:eta.choice}, we have
\begin{align*}
\sin (2 \sigma \eta) \beta & = 2 \sin (\sigma \eta) \beta \sqrt{1-\sin^2 (\sigma
  \eta)} \\
& \ge  2 \sin (\sigma \eta) \beta - 2  \sin^3 (\sigma \eta) |\beta| = 2 \alpha
\frac{\|r\|}{\|p\|} \beta -  2 \alpha^3 \frac{\|r\|^3}{\|p\|^3} |\beta|.
\end{align*}
By substituting into \eqnok{eq:Udecrease.2}, we obtain
\beq
\label{eq:Udecrease.3}
\epsilon_t-\epsilon_{t+1} \ge -\alpha^2 \frac{\|r\|^2}{\|p\|^2} 
+ 2 \alpha \frac{\|r\|}{\|p\|} \frac{(\bar{U}^Tp)^T(\bar{U}^Tr)}{\|p\|\|r\|} 
- 2 \alpha^3 \frac{\|r\|^2}{\|p\|^2} \frac{|(\bar{U}^Tp)^T(\bar{U}^Tr)|}{\|p\|^2}.
\eeq

We will return to formula \eqnok{eq:Udecrease.3} in
Section~\ref{sec:UpUr}. To preview: we will show that
\beq \label{eq:upur}
(\bar{U}^Tp)^T(\bar{U}^Tr) \approx \|r\|^2, 
\eeq
and that the final term on the right-hand side is higher-order. Thus,
we can deduce that the right-hand side of \eqnok{eq:Udecrease.3} is
approximately
\[
\alpha (2-\alpha) \frac{\|r\|^2}{\|p\|^2},
\]
and hence that the approximate maximal improvement $\epsilon_t -
\epsilon_{t+1}$ is obtained by setting $\alpha =1$, as claimed
earlier.

\subsection{Relating $U_t$ to $\bar{U}$}
\label{sec:UtUbar}

We state here a fundamental result about the relationship between
$U_t$, $\bar{U}$, and the quantity $\epsilon_t$ defined in
\eqnok{eq:epst}. After an orthogonal transformation, the
squared-Frobenius-norm difference between $U_t$ and $\bar{U}$ is of
the same order as $\epsilon_t$.

Recalling the definition \eqnok{eq:defphi} of the principal angles
$\phi_i(U_t,\bar{U})$ between the subspaces spanned by the columns of
$U_t$ and the columns of $\bar{U}$, we define
\beq \label{eq:defSG}
\Sigma_t := \mbox{\rm diag} \, (\sin \phi_i(\bar{U},U_t)), \quad
\Gamma_t := \mbox{\rm diag} \, (\cos \phi_i(\bar{U},U_t)).
\eeq
Recalling \eqnok{eq:epst} and using the definitions \eqnok{eq:defphi}
and \eqnok{eq:defSG}, we have
\begin{subequations} \label{eq:epsbounds}
\begin{align}
\| \Sigma_t \|_F^2 & =  \sum_{i=1}^d \sin^2 \phi_i(\bar{U},U_t) = \epsilon_t, \\
\| \Gamma_t \|_F^2 &= \sum_{i=1}^d \sigma_i(\bar{U}^TU_t) =
\sum_{i=1}^d \cos^2 \phi_i(\bar{U},U_t) = d-\epsilon_t, \\
\label{eq:epsbounds.U}
\| \bar{U} \bar{U}^T - U_t U_t^T \|_F^2 &= 2d - 2\|\bar{U}^TU_t\|_F^2 =2\epsilon_t.
\end{align}
\end{subequations}

We have the following lemma.
\begin{lemma} \label{uTunearorthogonal}
\laura{Let $\epsilon_t$ be as in \eqnok{eq:epst} and suppose} $n \ge 2d$.
Then there is an orthogonal matrix $V_t \in \R^{d\times d}$ such that
\[
\epsilon_t \le \| \bar{U} V_t - U_t \|_F^2 \le 2 \epsilon_t,
\]
and thus $\|\bar{U}^T U_t - V_t\|_F^2 \leq 2\epsilon_t$.
\end{lemma}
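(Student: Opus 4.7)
The plan is to construct $V_t$ via the SVD (equivalently, the orthogonal factor of the polar decomposition) of $\bar{U}^TU_t$. Since $\bar{U}$ and $U_t$ both have orthonormal columns, the $d \times d$ matrix $\bar{U}^TU_t$ has singular values in $[0,1]$, and by \eqnok{eq:defphi} these are precisely $\cos \phi_i(\bar{U},U_t)$. Write its SVD as $\bar{U}^TU_t = A\Gamma_t B^T$ with $A, B \in \R^{d\times d}$ orthogonal and $\Gamma_t$ as in \eqnok{eq:defSG}, and set $V_t := AB^T$, which is clearly orthogonal.

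Next I would compute $\| \bar U V_t - U_t\|_F^2$ directly by expanding the square and using $\|\bar U\|_F^2 = \|U_t\|_F^2 = d$ together with $\trace(V_t^T \bar{U}^TU_t) = \trace(BA^T A\Gamma_t B^T) = \trace(\Gamma_t)$. This yields
\[
\| \bar U V_t - U_t \|_F^2 \;=\; 2d - 2\sum_{i=1}^d \cos \phi_i(\bar U, U_t) \;=\; 2 \sum_{i=1}^d (1-\cos\phi_i(\bar U, U_t)).
\]
From \eqnok{eq:epst} we also have $\epsilon_t = \sum_i (1-\cos^2\phi_i) = \sum_i (1-\cos\phi_i)(1+\cos\phi_i)$. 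Since principal angles lie in $[0,\pi/2]$, each $\cos\phi_i \in [0,1]$, so $1 \le 1+\cos\phi_i \le 2$ termwise. Multiplying $\sum_i(1-\cos\phi_i)$ by the pointwise bounds immediately yields
\[
\epsilon_t \;\le\; 2\sum_i(1-\cos\phi_i) \;=\; \|\bar U V_t - U_t\|_F^2 \;\le\; 2\epsilon_t,
\]
which is the desired two-sided bound.

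For the final assertion, the cleanest route is to observe that $\bar U \bar U^T$ is an orthogonal projection (of rank $d \le n/2$ by the assumption $n \ge 2d$), hence a Frobenius-norm contraction. Applying it to $\bar U V_t - U_t$, and using that $\bar U$ is an isometry on the column side so that $\|\bar U M\|_F = \|M\|_F$ for any $d \times d$ matrix $M$, gives
\[
\|\bar U^T U_t - V_t\|_F \;=\; \|\bar U(\bar U^T U_t - V_t)\|_F \;=\; \|\bar U \bar U^T(U_t - \bar U V_t)\|_F \;\le\; \|U_t - \bar U V_t\|_F,
\]
and squaring yields $\|\bar U^T U_t - V_t\|_F^2 \le 2\epsilon_t$.

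There is no real obstacle: the only nontrivial choice is picking $V_t$ to be the Procrustes-optimal rotation (the orthogonal factor in the polar decomposition), and then everything reduces to a pointwise inequality between $1-\cos\phi_i$ and $\sin^2\phi_i = (1-\cos\phi_i)(1+\cos\phi_i)$ on each principal angle. The hypothesis $n \ge 2d$ plays no role in the Frobenius computation itself; it is only used to guarantee that the construction is geometrically meaningful (enough ambient room for the full set of principal angles) and that $\bar U \bar U^T$ behaves as an honest rank-$d$ projection in the contraction step.
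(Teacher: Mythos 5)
Your proof is correct, and it reaches the paper's conclusion by a genuinely different (and slightly more economical) route. The paper invokes the CS decomposition \cite[Theorem~5.2]{SteS90}, writing $Q_t\bar U\bar Y$ and $Q_tU_tY_t$ in the block form with $\Gamma_t$ and $\Sigma_t$, setting $V_t=\bar Y Y_t^T$, and reading off $\|\bar U V_t-U_t\|_F^2=\sum_i[(1-\cos\phi_i)^2+\sin^2\phi_i]=\sum_i(2-2\cos\phi_i)$ from the explicit block coordinates; this is exactly where the hypothesis $n\ge 2d$ enters. You instead take the SVD $\bar U^TU_t=A\Gamma_tB^T$ of the $d\times d$ cross-Gram matrix, choose the Procrustes factor $V_t=AB^T$ (the same matrix as the paper's $\bar Y Y_t^T$, up to SVD nonuniqueness), and obtain the identical value $\|\bar U V_t-U_t\|_F^2=2d-2\,\trace\,\Gamma_t$ by a trace expansion; the two-sided bound then follows in both proofs from the same termwise comparison of $1-\cos\phi_i$ with $\sin^2\phi_i=(1-\cos\phi_i)(1+\cos\phi_i)$, and your contraction argument $\|\bar U^TU_t-V_t\|_F=\|\bar U\bar U^T(U_t-\bar U V_t)\|_F\le\|U_t-\bar U V_t\|_F$ for the final claim is sound (the paper instead notes that $\|U_t^T\bar U-V_t^T\|_F^2$ is controlled by the same computation). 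What your route buys is that the lemma is proved without the $n\ge 2d$ assumption and without appealing to the CS decomposition; the only inaccuracy is your closing gloss on the role of $n\ge 2d$: $\bar U\bar U^T$ is an honest rank-$d$ orthogonal projection whenever $d\le n$, so the hypothesis is not needed anywhere in your argument---it is an artifact of the paper's use of the Stewart--Sun block form. Since that remark is commentary rather than a step of the proof, it does not affect correctness.
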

\begin{proof}
The proof uses \cite[Theorem~5.2]{SteS90}. There are unitary
matrices $Q_t$, $\bar{Y}$, and $Y_t$ such that
\beq \label{eq:ss}
Q_t\bar{U}\bar{Y} := \bordermatrix{&d\cr
                d& I \cr
               d & 0 \cr
                n-2d & 0 }, 
\quad
Q_t U_t Y_t := \bordermatrix{&d\cr
                d& \Gamma_t \cr
               d & \Sigma_t \cr
                n-2d & 0 },
\eeq
where $\Gamma_t$ and $\Sigma_t$ are as defined in \eqnok{eq:defSG}.
Defining the orthogonal matrix $V_t := \bar{Y}Y_t^T$ we have that
\begin{align*}
\bar{U}V_t &= Q_t^T \left(\begin{matrix}I \\ 0 \\ 0\end{matrix}\right) 
\bar{Y}^T \bar{Y} Y_t^T \\
&= Q_t^T \left(\begin{matrix}I \\ 0 \\ 0\end{matrix}\right)Y_t^T\\
&=Q_t^T \left(\begin{matrix}\Gamma_t \\ \Sigma_t \\ 0\end{matrix}\right)Y_t^T 
+ Q_t^T \left(\begin{matrix}I-\Gamma_t \\ -\Sigma_t \\ 0\end{matrix}\right)Y_t^T \\
&=U_t + Q_t^T \left(\begin{matrix}I-\Gamma_t \\ -\Sigma_t \\ 0\end{matrix}\right)Y_t^T.
\end{align*}
Therefore, using the abbreviated notation $\phi_i :=
\phi_i(\bar{U},U_t)$, together with orthogonality of $Q_t$ and $Y_t$,
we have
\[
\| \bar{U} V_t - U_t \|_F^2 = \|I-\Gamma_t\|_F^2 + \|\Sigma_t\|_F^2 =
\sum_{i=1}^d [ (1-\cos \phi_i)^2 + \sin^2 \phi_i].
\]
By dropping the cosine part of each summation term, we obtain from
\eqnok{eq:epst} that $\| \bar{U} V_t - U_t \|_F^2 \ge \sum_{i=1}^d
\sin^2 \phi_i = \epsilon_t$, proving the lower bound. For the upper
bound, we have
\begin{align*}
\|U_t^T \bar{U} - V_t^T\|_F^2 
&= \sum_{i=1}^d [ (1-\cos \phi_i)^2 + \sin^2 \phi_i] \\
&= \sum_{i=1}^d [ 2-2 \cos \phi_i]  \le \sum_{i=1}^d [2-2 \cos^2 \phi_i] 
 = 2 \sum_{i=1}^d  \sin^2 \phi_i  = 2 \epsilon_t,
\end{align*}
as required. The final claim is an immediate consequence of this upper
bound.
\end{proof}

\subsection{Consequences of Sampling} \label{sec:samp}

In this subsection we investigate some of the issues raised by
observing the subspace vector $v_t$ only on a sample set $\Omega_t
\subset \{1,2,\dotsc,n\}$, seeing how some of the identities and
bounds of Sections \ref{sec:ett} and \ref{sec:UtUbar} are affected. We state a lower bound
on the cardinality of $\Omega_t$ and an upper bound on $\epsilon_t$
that give sufficient conditions for these looser bounds to hold. These
bounds are vital to the analysis of later subsections.

We start with a simple result about the relationship between
$[\bar{U}]_{\Omega_t}$ and $[U_t]_{\Omega_t}$, based on
Lemma~\ref{uTunearorthogonal}.
\begin{lemma} \label{lem:BCV}
Let $V_t$ be the matrix from Lemma~\ref{uTunearorthogonal}. Then $\|
[\bar{U}]_{\Omega_t} - [U_t]_{\Omega_t} V_t^T \|_F^2 
= \| [\bar{U}]_{\Omega_t} V_t - [U_t]_{\Omega_t} \|_F^2 \le 2 \epsilon_t$.
\end{lemma}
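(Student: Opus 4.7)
The plan is to combine two very elementary observations: (i) orthogonal transformations leave the Frobenius norm invariant, so the stated equality is essentially free, and (ii) restricting to a subset of rows can only decrease the squared Frobenius norm, so the inequality follows directly from Lemma~\ref{uTunearorthogonal}.

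First I would prove the equality. Since $V_t$ is orthogonal, right-multiplication by $V_t^T$ preserves $\|\cdot\|_F$. Writing
\[
[\bar{U}]_{\Omega_t} - [U_t]_{\Omega_t} V_t^T = \bigl( [\bar{U}]_{\Omega_t} V_t - [U_t]_{\Omega_t} \bigr) V_t^T,
\]
the two Frobenius norms in the statement are equal.

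Next I would prove the inequality. Letting $e_i^T$ denote the $i$th standard basis row, the squared Frobenius norm of a matrix decomposes as a sum of squared row norms, so
\[
\| [\bar{U}]_{\Omega_t} V_t - [U_t]_{\Omega_t} \|_F^2
= \sum_{i \in \Omega_t} \| e_i^T(\bar{U} V_t - U_t) \|_2^2
\le \sum_{i=1}^n \| e_i^T(\bar{U} V_t - U_t) \|_2^2
= \| \bar{U} V_t - U_t \|_F^2.
\]
The upper bound from Lemma~\ref{uTunearorthogonal} then yields $\|\bar{U} V_t - U_t\|_F^2 \le 2\epsilon_t$, completing the proof.

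There is really no obstacle here; the lemma is a one-line consequence of two standard facts, and its role is purely to transport the ``closeness'' bound of Lemma~\ref{uTunearorthogonal} from the full matrices $\bar{U}$, $U_t$ to their row submatrices on the sample $\Omega_t$, which is exactly what the sampling analysis of Subsection~\ref{sec:samp} will need going forward.
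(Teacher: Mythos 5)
Your proof is correct and follows essentially the same route as the paper: the paper simply notes that $\| [\bar{U}]_{\Omega_t} V_t - [U_t]_{\Omega_t} \|_F^2 \le \| \bar{U} V_t - U_t\|_F^2 \le 2\epsilon_t$, with the second inequality coming from Lemma~\ref{uTunearorthogonal}, which is exactly your row-restriction argument. Your explicit verification of the stated equality via orthogonal invariance of $\|\cdot\|_F$ is a fine addition that the paper leaves implicit.
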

\begin{proof}
We have
\begin{align*}
\| [\bar{U}]_{\Omega_t} V_t - [U_t]_{\Omega_t} \|_F^2
& \le \| \bar{U} V_t - U_t\|_F^2 \le 2 \epsilon_t,
\end{align*}
where the last inequality follows from Lemma~\ref{uTunearorthogonal}.
\end{proof}



We now introduce some simplified notation for important quantities in
our analysis, and state the representations of the key vectors $w_t$,
$p_t$, $w_t$, and $v_t$ in terms of this notation. We also make use of
the vector $s_t$ defined in \eqref{eq:vt}.  As in other parts of the
paper, we drop the subscript $t$ freely on vector quantities.
\begin{subequations}
\begin{align}
\label{eq:def.B}
B & := \bar{U}_{\Omega_t}, \\
\label{eq:def.C}
C & := [U_t]_{\Omega_t}, \\
P_{N(C^T)} & = (I-C(C^TC)^{-1} C^T), \\
P_{N(B^T)} &= (I-B(B^TB)^{-1} B^T), \\
[v_t]_{\Omega_t} &= Bs,  \\
w & = (C^TC)^{-1} C^T Bs, \\
p &= U_t w = U_t (C^TC)^{-1} C^T Bs, \\
\label{eq:pt}
[p_t]_{\Omega_t} &= P_{R(C)} Bs, \\
\label{eq:rtO}
[r_t]_{\Omega_t} &= Bs - [p_t]_{\Omega_t} = (I-C(C^TC)^{-1} C^T) Bs = P_{N(C^T)} Bs.
\end{align}
\end{subequations}
The notation $B$ and $C$ from \eqnok{eq:def.B} and \eqnok{eq:def.C} is
used for simplicity in this subsection and the next. The reader will
note that we have used both $(B^TB)^{-1}$ and $(C^TC)^{-1}$ freely.
This property requires that our assumption \eqnok{step:checkCTC} holds
for $U_t$. It requires a similar property for $[\bar{U}]_{\Omega_t}$,
something we simply assume for now, but prove later as a consequence
of incoherence; see Theorem \ref{th:sampledsingvals}.

Note that we have from 
\eqnok{eq:pt} and \eqnok{eq:rtO} that
\beq \label{eq:dum8}
P_{N(C^T)} [r_t]_{\Omega_t} = P_{N(C^T)} (Bs_t -C w_t) = P_{N(C^T)} Bs_t .
\eeq

For the remainder of the paper, we make the following assumptions on
the size of the sample set $\Omega_t$ and the size of $\epsilon_t$:
\beq \label{eq:Obound} 
| \Omega_t | \ge q, 
\eeq
\beq \label{eq:epsbound.1} 
\epsilon_t \le \frac{1}{128} \frac{q^2}{n^2  d}.  
\eeq
In later subsections, we will derive conditions on $q$ that facilitate
the convergence results. For now, we have the following estimates on
vectors of interest.
\begin{lemma} \label{lem:rps}
Suppose that \eqnok{step:checkCTC} holds and that \eqnok{eq:Obound} and
\eqnok{eq:epsbound.1} are satisfied. Then we have
\begin{align}
\label{eq:rs.bound}
\| r_t\| & \le \sqrt{2 \epsilon_t} \| s_t\|, \\
\label{eq:ps.bound} 
\| p_t \| & \in  \left[ \frac34 \| s_t \|, \frac54 \| s_t \| \right], \\
 \label{eq:rp.5}
\frac{\| r_t \|^2}{\| p_t \|^2} & \le \frac{32}{9} \epsilon_t
\end{align}
\end{lemma}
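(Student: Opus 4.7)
The plan is to prove the three bounds in order, leveraging Lemma~\ref{lem:BCV} (which controls the perturbation $B - CV_t^T$ in Frobenius norm) together with the eigenvalue bound \eqref{eq:CTCbound} on $(C^TC)^{-1}$, all while freely using the abbreviations $B = [\bar{U}]_{\Omega_t}$ and $C = [U_t]_{\Omega_t}$. The key algebraic identity to exploit throughout is $B = CV_t^T + E_t$ with $\|E_t\|_F \le \sqrt{2\epsilon_t}$, obtained directly from Lemma~\ref{lem:BCV}.

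For \eqref{eq:rs.bound}, I would start from \eqref{eq:rtO}, which gives $[r_t]_{\Omega_t} = P_{N(C^T)} B s_t$. Substituting $B = CV_t^T + E_t$ and noting $P_{N(C^T)} C = 0$, the first piece vanishes and we are left with $[r_t]_{\Omega_t} = P_{N(C^T)} E_t s_t$. Since $P_{N(C^T)}$ is an orthogonal projector and $[r_t]_{\Omega_t^c} = 0$, we immediately obtain $\|r_t\| \le \|E_t\|\,\|s_t\| \le \|E_t\|_F \|s_t\| \le \sqrt{2\epsilon_t}\,\|s_t\|$.

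The heart of the argument is \eqref{eq:ps.bound}. Using $\|p_t\| = \|w_t\|$ from \eqref{eq:pw} and $w_t = (C^TC)^{-1} C^T B s_t$, the same decomposition $B = CV_t^T + E_t$ yields
\[
w_t = V_t^T s_t + (C^TC)^{-1} C^T E_t s_t.
\]
Because $V_t$ is orthogonal, the first term has norm $\|s_t\|$. For the second term, I would bound the Moore--Penrose pseudo-inverse norm using \eqref{step:checkCTC}, giving $\|(C^TC)^{-1} C^T\| = \lambda_d(C^TC)^{-1/2} \le \sqrt{2n/|\Omega_t|}$, so that
\[
\|(C^TC)^{-1} C^T E_t s_t\| \le 2\sqrt{n\epsilon_t/|\Omega_t|}\,\|s_t\|.
\]
Applying \eqref{eq:Obound} and \eqref{eq:epsbound.1} (along with $q \le n$) then bounds this quantity by $\frac{1}{4\sqrt{2d}}\|s_t\| \le \tfrac{1}{4}\|s_t\|$. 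The triangle inequality (both ways) yields $\|w_t\| \in [\tfrac34,\tfrac54]\|s_t\|$, which is \eqref{eq:ps.bound}.

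Finally, \eqref{eq:rp.5} is an immediate division: $\|r_t\|^2/\|p_t\|^2 \le (2\epsilon_t)/(9/16) = \tfrac{32}{9}\epsilon_t$. The main obstacle is really just the pseudo-inverse bookkeeping in step~(ii), where one must verify that the particular constant $1/128$ in \eqref{eq:epsbound.1} is exactly what is needed to squeeze the perturbation term below $\tfrac{1}{4}\|s_t\|$; no eigenvalue condition on $B$ is actually invoked, only the condition on $C$ provided by \eqref{step:checkCTC}.
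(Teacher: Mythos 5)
Your proposal is correct and follows essentially the same route as the paper: both arguments hinge on the decomposition $B = CV_t^T + (B - CV_t^T)$ from Lemma~\ref{lem:BCV}, kill the first piece with $P_{N(C^T)}C = 0$ for \eqref{eq:rs.bound}, and bound the perturbation in $w_t = V_t^T s_t + (C^TC)^{-1}C^T(B - CV_t^T)s_t$ by $\tfrac14\|s_t\|$ using \eqref{step:checkCTC}, \eqref{eq:Obound}, and \eqref{eq:epsbound.1}, before dividing to get \eqref{eq:rp.5}. The only (harmless) difference is that you bound $\|(C^TC)^{-1}C^T\| \le \sqrt{2n/|\Omega_t|}$ via the smallest eigenvalue directly, which is slightly sharper than the paper's $\|(C^TC)^{-1}\|\,\|C\| \le 2n/|\Omega_t|$; both comfortably meet the $1/128$ constant in \eqref{eq:epsbound.1}.
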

\begin{proof}
We have from \eqnok{eq:rtO}, $[r_t]_{\Omega_t^c}=0$, the fact that
$P_{N(C^T)}C =0$, and Lemma~\ref{lem:BCV} that
\begin{align*}
\| r_t \| = \|[r_t]_{\Omega_t} \| = \| P_{N(C^T)} Bs_t \| & = \\
\| P_{N(C^T)} (B-CV^T) & s_t \| \le \| B-CV^T\| \| s_t \| \le \sqrt{2
  \epsilon_t} \| s_t\|,
\end{align*}
proving \eqnok{eq:rs.bound}. 

We prove the lower bound in \eqnok{eq:ps.bound} (the upper bound is
similar). We have from $\|C \| \le \| U_t\| = 1$,
Lemma~\ref{lem:BCV}, \eqnok{eq:CTCbound}, \eqnok{eq:Obound}, and
\eqnok{eq:epsbound.1}, that
\begin{align*}
\| p_t \| = \| w_t \| &= \| (C^TC)^{-1} C^TBs_t \| \\
&=  \| (C^TC)^{-1} C^TBV(V^Ts_t) \| \\
&\ge \| (C^TC)^{-1} C^TC(V^Ts_t) \|  - \| (C^TC)^{-1} C^T(BV-C)(V^Ts_t) \| \\
& \ge \| s_t \| - \| (C^TC)^{-1} \| \| C \| \| BV-C \| \| s_t \| \\
& \ge \| s_t \| - \frac{2n}{| \Omega_t |} \sqrt{2 \epsilon_t}  \| s_t \| \\
& \ge \| s_t \| - \frac{2n}{q} \sqrt{2 \epsilon_t}  \| s_t \| \\ 
& \ge \| s_t\| - \frac{2n}{q} \frac{1}{8} \frac{q}{n \sqrt{d}} \|s_t \| \\
& \ge \| s_t \| - \frac{1}{4} \| s_t \| = \frac34 \|s_t\|.
\end{align*}

The final bound \eqnok{eq:rp.5} follows immediately from the preceding
two results.

\end{proof}

\subsection{Estimating $(\bar{U}^Tp)^T(\bar{U}^Tr)$}
\label{sec:UpUr}

We return now to the key quantity $(\bar{U}^Tp)^T(\bar{U}r)$ that
appears in \eqnok{eq:Udecrease.3}, with the goal of establishing a
precise form of the estimate \eqnok{eq:upur}. Throughout this section,
we assume that the conditions \eqnok{eq:Obound} and
\eqnok{eq:epsbound.1} are satisfied.

We start by noting from \eqnok{eq:dum8} that
\begin{align*}
\bar{U}^Tr &= B^T [r_t]_{\Omega_t} = B^T P_{N(C^T)} Bs_t =  B^T P_{N(C^T)} [r_t]_{\Omega_t} \\
\bar{U}^Tp &= \bar{U}^T U_t w_t,
\end{align*}
and therefore
\beq
\label{eq:UrUp}
(\bar{U}^Tr)^T(\bar{U}^Tp) = [r_t]_{\Omega_t}^T P_{N(C^T)} B \bar{U}^T U_t w.
\eeq
By replacing $\bar{U}^TU_t$ with $V+\bar{U}^TU_t - V$ (where $V=V_t$
is the orthogonal matrix from Lemma~\ref{uTunearorthogonal}) and
manipulating, we obtain
\begin{align}
\nonumber
P_{N(C^T)} B \bar{U}^T U_t w &= P_{N(C^T)} B V w + P_{N(C^T)} B (\bar{U}^T U_t - V) w \\
\nonumber
&= P_{N(C^T)} B V (C^TC)^{-1} C^T B s +  P_{N(C^T)} (BV-C) V^T (\bar{U}^T U_t - V) w \\
\nonumber
&=P_{N(C^T)} (B V-C) (C^TC)^{-1} C^T (C + BV - C) V^T s +\\ 
\nonumber
&\qquad P_{N(C^T)} (BV-C)V^T (\bar{U}^T U_t - V) w \\
\nonumber
&=P_{N(C^T)} (B V-C) (C^TC)^{-1} C^TC V^T s + \\
\nonumber
& \qquad  P_{N(C^T)} (B V-C) (C^TC)^{-1} C^T (BV - C) V^T s + \\
\nonumber
&\qquad P_{N(C^T)} (BV-C)V^T (\bar{U}^T U_t - V) w \\
\nonumber
&= P_{N(C^T)} (B-CV^T) s  + P_{N(C^T)} (B-CV^T) z \\
\label{eq:tfo}
&= [r_t]_{\Omega_t} + P_{N(C^T)} (B-CV^T) z,
\end{align}
where the final equality comes from \eqnok{eq:dum8}, and we define
\[
z:= V(C^TC)^{-1} C^T (BV - C) V^T s + (\bar{U}^T U_t - V)
(C^TC)^{-1} C^T Bs.
\]
From \eqnok{step:checkCTC}, we have
\[
\| C \| \le \sqrt{\frac{3| \Omega_t|}{2n}}.
\]
By using this bound, together with \eqnok{eq:CTCbound}, $\| B \| \le
\| \bar{U} \| = 1$, Lemma~\ref{uTunearorthogonal}, and
\eqnok{eq:Obound}, we obtain
\begin{align*}
\|z\| & \leq \|(C^TC)^{-1}\|_2 \|C\|_2 \|BV-C\|_2 \|s\|_2
 + \|\bar{U}^T U_t - V\|_2 \|(C^TC)^{-1}\|_2 \|C\|_2 \|B\|_2 \|s\|_2\\
&\leq \frac{2n}{|\Omega_t|}  \sqrt{\frac{3|\Omega_t|}{2n}}  \sqrt{2\epsilon_t} \|s\|_2 + \sqrt{2\epsilon_t} \frac{2n}{|\Omega_t|}  \sqrt{\frac{3|\Omega_t|}{2n}}  \|s\|_2 \\
&= 4\sqrt{3} \sqrt{\frac{n}{|\Omega_t|}} \sqrt{\epsilon_t} \|s\|_2 
\le  4\sqrt{3} \sqrt{\frac{n}{q}} \sqrt{\epsilon_t} \|s\|_2.
\end{align*}
By combining this bound with \eqnok{eq:tfo} and \eqnok{eq:UrUp}, we obtain
\begin{align}
\nonumber
(\bar{U}^Tr)^T(\bar{U}^Tp)  & \ge \| r \|^2 - \| [r_t]_{\Omega_t}^T P_{N(C^T)} (B-CV^T) z \| \\
\nonumber
& \ge \| r \|^2 - \| r \| \| B-CV^T\| \| z \| \\
\nonumber
& \ge \| r \|^2 - \| r \| \sqrt{2 \epsilon_t} 4 \sqrt{3} \sqrt{\frac{n}{q}} \sqrt{\epsilon_t} \|s\|_2 \\
\label{eq:upur.3}
& \ge \|r\|^2 - 8 \sqrt{3} \sqrt{\frac{n}{q}} \epsilon_t^{3/2} \|s\|^2,
\end{align}
where for the final inequality we used \eqnok{eq:rs.bound}. We apply a
similar argument to obtain an upper bound on
$(\bar{U}^Tr)^T(\bar{U}^Tp)$, leading to a bound on the
absolute value:
\beq \label{eq:upur.5}
|(\bar{U}^Tr)^T(\bar{U}^Tp)|  \le \|r\|^2 + 8 \sqrt{3} \sqrt{\frac{n}{q}} \epsilon_t^{3/2} \|s\|^2.
\eeq
From the bound \eqnok{eq:rs.bound}, we have $\| s_t \|^2 / \|p_t \|^2
\le (4/3)^2$, and thus from \eqnok{eq:upur.3} we obtain
\beq \label{eq:upur.4}
\frac{(\bar{U}^Tp)^T(\bar{U}^Tr)}{\|p\|^2} \ge
\frac{\|r\|^2}{\|p\|^2} - 8 \sqrt{3} \sqrt{\frac{n}{q}} \epsilon_t^{3/2} \frac{16}{9} =
\frac{\|r\|^2}{\|p\|^2} - \frac{128 \sqrt{3}}{9} \sqrt{\frac{n}{q}} \epsilon_t^{3/2}.
\eeq
Likewise, from \eqnok{eq:upur.5}, we have
\beq \label{eq:upur.6}
\frac{|(\bar{U}^Tp)^T(\bar{U}^Tr)|}{\|p\|^2} \le
\frac{\|r\|^2}{\|p\|^2} + \frac{128 \sqrt{3}}{9} \sqrt{\frac{n}{q}} \epsilon_t^{3/2}.
\eeq
Using \eqnok{eq:rp.5} together with the bound \eqnok{eq:epsbound.1} on
$\epsilon_t$, we obtain from \eqnok{eq:upur.6} that
\begin{align}
\nonumber
\frac{|(\bar{U}^Tp)^T(\bar{U}^Tr)|}{\|p\|^2} & \le 
\frac{32}{9} \epsilon_t + \frac{128 \sqrt{3}}{9} \sqrt{\frac{n}{q}} \frac{1}{8 \sqrt{2}} \frac{q}{n \sqrt{d}} \epsilon_t \\
\nonumber
&= \frac{32}{9} \epsilon_t + \frac{8 \sqrt{6}}{9} \sqrt{\frac{q}{nd}} \epsilon_t \\
\label{eq:upur.8}
& \le \frac{64}{9} \epsilon_t,
\end{align}
where the last inequality follows from $q \le n$ and $d \ge 1$.

\subsection{Bounding $\epsilon_{t+1}$} \label{sec:epst1}

We return now to the inequality \eqnok{eq:Udecrease.3}, using the
bounds from the previous subsection to refine our upper bound on
$\epsilon_{t+1}$.  By rearranging \eqnok{eq:Udecrease.3} and
substituting \eqnok{eq:upur.4}, \eqnok{eq:upur.6}, \eqnok{eq:rp.5},
and \eqnok{eq:upur.8}, we obtain
\begin{align*}
\epsilon_{t+1} & \le \epsilon_t + \alpha^2 \frac{\|r\|^2}{\|p\|^2}  - 2 \alpha \frac{(\bar{U}^Tp)^T(\bar{U}^Tr)}{\|p\|^2} + 2 \alpha^3 \frac{\|r\|^2}{\|p\|^2} \frac{|(\bar{U}^Tp)^T(\bar{U}^Tr)|}{\|p\|^2} \\
& \le \epsilon_t + \alpha^2 \frac{\|r\|^2}{\|p\|^2} 
- 2 \alpha  \frac{\|r\|^2}{\|p\|^2} 
+ \frac{256 \sqrt{3}}{9} \alpha \sqrt{\frac{n}{q}} \epsilon_t^{3/2} + 
2 \alpha^3 \left( \frac{32}{9} \epsilon_t \right)
\left( \frac{64}{9} \epsilon_t \right).
\end{align*}
By bounding $\epsilon_t^{1/2}$ using \eqnok{eq:epsbound.1} in the last
term, we obtain
\begin{align*}
\epsilon_{t+1} & \le \epsilon_t - \alpha (2-\alpha) \frac{\|r\|^2}{\|p\|^2} 
+ \frac{256 \sqrt{3}}{9} \alpha \sqrt{\frac{n}{q}} \epsilon_t^{3/2} +
\alpha^3 \frac{64}{9 \sqrt{2}} \frac{q}{n \sqrt{d}} \epsilon_t^{3/2} \\
& \le  \epsilon_t - \alpha (2-\alpha) \frac{\|r\|^2}{\|p\|^2} 
+ \left( \frac{256 \sqrt{3}}{9} \alpha + \frac{64}{9 \sqrt{2}} \alpha^3 \right)
\sqrt{\frac{n}{q}} \epsilon_t^{3/2},
\end{align*}
where we used $q \le n$ and $d \ge 1$ 
in the second inequality.  It is evident from
this expression that $\alpha_t \equiv 1$ is a good choice for the
steplength ``fudge factor'' in \eqnok{eq:eta.choice}. By fixing
$\alpha_t=1$ and simplifying the numerical constants in the
expression above, we obtain
\beq \label{eq:Udecrease.8}
\epsilon_{t+1} \le \epsilon_t - \frac{\|r\|^2}{\|p\|^2} + 55
\sqrt{\frac{n}{q}} \epsilon_t^{3/2}.
\eeq

We proceed in Subsection~\ref{sec:lb} to develop a high-probability
lower bound on $\| r\|^2 / \|p\|^2$, showing that this term is usually
at least a small positive multiple of $\epsilon_t$. Before doing this,
however, it is necessary to discuss the incoherence assumptions and
their consequences.

\subsection{Incoherence and its Consequences} \label{sec:coherence}

It is essential to our convergence results that the subspace $\cS$ to
be identified is {\em incoherent} with the coordinate directions, that
is, the projection of each coordinate unit vector onto $\cS$
should not be too long. This assumption is needed to ensure that the
partially sampled observation vectors have sufficient expected
information content. We make these concept precise in this subsection.

\begin{definition} \label{def:coh}
Given a matrix $U$ of dimension $n \times d$ with orthonormal columns, defining
the subspace $\cT := R(U)$, the {\em coherence} of $\cT$ is 
\[
\mu(\cT):= \frac{n}{d} \max_{i=1,2,\dotsc,n} \, \|P_{\cT} e_i\|_2^2\;,
\]
where $e_i$ is the $i$th unit vector in $\R^n$.  Note that $1 \leq
\mu(\cT) \leq n/d$. Since $P_{\cT} = UU^T$, we have (with a
slight change of notation)
\[
\mu(U) = \frac{n}{d} \max_{i=1,2,\dotsc,n} \, \| U_{i \cdot} \|_2^2,
\]
where $U_{i \cdot}$ denotes the $i$th row of $U$. As a special case of
this definition, we have for a vector $x \in \R^n$ that 
\[
\mu(x) = n \frac{\| x \|_\infty^2}{\|x \|_2^2}.
\]
\end{definition}

We have the following result that relates the coherence of
$R(U_t)$ to that of $R(\bar{U})$, for small values of $\epsilon_t$.
\begin{lemma} \label{lem:cohUt}
Suppose that $| \Omega_t | \ge q$, as in \eqnok{eq:Obound} and that
\beq \label{eq:epsbound.2}
\epsilon_t \le \frac{d}{16n} \mu(\bar{U}).
\eeq
Then 
\[
\mu(U_t) \le \mu(\bar{U}) + 4 \sqrt{\frac{n}{d}} \epsilon_t^{1/2}
\mu(\bar{U})^{1/2} \le 2 \mu(\bar{U}).
\]
\end{lemma}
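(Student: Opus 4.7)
\textbf{Proof plan for Lemma~\ref{lem:cohUt}.}

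The natural approach is to reduce the problem to Lemma~\ref{uTunearorthogonal}, which gives an orthogonal matrix $V_t$ with $\|\bar U V_t - U_t\|_F^2 \le 2\epsilon_t$. Since the coherence $\mu(U) = (n/d)\max_i \|e_i^T U\|_2^2$ is defined row by row and is invariant under right multiplication by an orthogonal matrix (we have $\|e_i^T \bar U V_t\|_2 = \|e_i^T \bar U\|_2$), it suffices to compare the $i$-th row of $U_t$ with the $i$-th row of $\bar U V_t$ and control the discrepancy by $\|\bar U V_t - U_t\|_F$.

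Concretely, I would fix $i$, apply the triangle inequality
\[
\|e_i^T U_t\|_2 \;\le\; \|e_i^T \bar U V_t\|_2 + \|e_i^T(\bar U V_t - U_t)\|_2 \;\le\; \|e_i^T \bar U\|_2 + \sqrt{2\epsilon_t},
\]
using that a single row has Euclidean norm at most the Frobenius norm of the whole matrix. Squaring, multiplying by $n/d$, and using the definition $\|e_i^T \bar U\|_2^2 \le (d/n)\mu(\bar U)$ to rewrite $\|e_i^T \bar U\|_2 \le \sqrt{(d/n)\mu(\bar U)}$, I obtain
\[
\mu(U_t) \;\le\; \mu(\bar U) + 2\sqrt{2}\,\sqrt{\frac{n\,\epsilon_t}{d}\,\mu(\bar U)} + \frac{2n\epsilon_t}{d}.
\]
This gives the first claimed inequality once the trailing $2n\epsilon_t/d$ term is absorbed into the cross term using the hypothesis \eqref{eq:epsbound.2}, which provides $\sqrt{n\epsilon_t/d}\le \tfrac14\sqrt{\mu(\bar U)}$ and hence $2n\epsilon_t/d\le\tfrac12\sqrt{(n\epsilon_t/d)\mu(\bar U)}$; the coefficient $2\sqrt{2}+\tfrac12$ is then comfortably below $4$.

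For the second inequality $\mu(U_t)\le 2\mu(\bar U)$, I would simply require the added term $4\sqrt{n/d}\,\epsilon_t^{1/2}\mu(\bar U)^{1/2}$ not to exceed $\mu(\bar U)$; rearranging gives exactly $\epsilon_t\le d\mu(\bar U)/(16n)$, which is \eqref{eq:epsbound.2}. The assumption $|\Omega_t|\ge q$ is not actually needed here and is listed only for uniformity with the rest of the section. The main subtlety, and essentially the only nontrivial bookkeeping, is matching the numerical constants so that the quadratic-in-$\epsilon_t^{1/2}$ term can be subsumed by the linear-in-$\epsilon_t^{1/2}$ term; the hypothesis \eqref{eq:epsbound.2} is chosen precisely to make this work.
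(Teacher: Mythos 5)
Your proposal is correct and follows essentially the same route as the paper's proof: a row-wise triangle inequality against $\bar{U}V_t$ from Lemma~\ref{uTunearorthogonal}, bounding the row discrepancy by the Frobenius norm $\sqrt{2\epsilon_t}$, then squaring, scaling by $n/d$, and invoking \eqnok{eq:epsbound.2} to control the constants. Your explicit absorption of the $2n\epsilon_t/d$ term into the cross term (yielding coefficient $2\sqrt{2}+\tfrac12\le 4$) and your observation that $|\Omega_t|\ge q$ is not actually used are both accurate refinements of what the paper does implicitly.
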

\begin{proof}
We have by Lemma~\ref{uTunearorthogonal} that
\[
\| (U_t)_{i \cdot} \|_2 
\le \| \bar{U}_{i \cdot} \|_2 + \| \bar{U}_{i \cdot}  V_t - (U_t)_{i \cdot|} \|_2
\le \| \bar{U}_{i \cdot} \|_2 + \sqrt{2 \epsilon_t}, \;\;  i=1,2,\dotsc,d.
\]
By squaring both sides of this inequality and multiplying by $n/d$, we obtain
\[
\frac{n}{d} \| (U_t)_{i \cdot} \|_2^2 \le
\frac{n}{d}  \| \bar{U}_{i \cdot} \|_2^2 + 2^{3/2} \epsilon_t^{1/2} \frac{n}{d}
\| \bar{U}_{i \cdot} \|_2 + 2 \frac{n}{d} \epsilon_t.
\]
By taking the maxima of both sides over $i=1,2,\dotsc,d$, we have from
Definition~\ref{def:coh} that
\[
\mu (U_t) \le \mu(\bar{U}) + 2^{3/2} \sqrt{\frac{n}{d}} \epsilon_t^{1/2}
\mu(\bar{U})^{1/2} + 2 \frac{n}{d} \epsilon_t.
\]
By substituting the bound  \eqnok{eq:epsbound.2} into this expression,
we obtain
\[
\mu (U_t) \le \mu(\bar{U}) + 2^{3/2} \sqrt{\frac{n}{d}} \frac14 
\sqrt{\frac{d}{n}} \mu(\bar{U}) +
2 \frac{n}{d} \frac{d}{16n} \mu(\bar{U}) \le 2 \mu(\bar{U}),
\]
as required.
\end{proof}

We use coherence to analyze the key condition \eqnok{step:checkCTC}
that is used in the algorithm to check acceptability of the sample
$\Omega_t$. We show in the following result that the singular values
of $U_\Omega^T U_\Omega$ are all approximately $|\Omega|/n$, under an
assumption that relates the size of $\Omega_t$ to the coherence of
$U$. The proof of this result appears in
Appendix~\ref{app:sampledsingvals}.
\begin{theorem} \label{th:sampledsingvals}
Given an $n \times d$ matrix $U$ with orthonormal columns and a parameter
$\delta>0$, let $\Omega \subset \{ 1,2,\dotsc,n\}$ be chosen
\laura{uniformly with replacement at random} such that 
\[
|\Omega| > \frac{8}{3} d \mu(U) \log\left(
\frac{2d}{\delta} \right).
\]
Then with probability at least $1-\delta$, the eigenvalues of
$U_\Omega^TU_\Omega$ lie in an interval
\[
\lambda_i\left(U_\Omega^TU_\Omega \right)\in \left[\frac{|\Omega|}{n}
  (1- \gamma), \frac{|\Omega|}{n} (1+\gamma)\right], \quad
i=1,2,\dotsc,d,
\] 
where 
\beq \label{eq:def.gamma}
\gamma := \sqrt{\frac{8 d \mu(U)}{3|\Omega|} \log\left(
  \frac{2d}{\delta} \right)}.
\eeq
\end{theorem}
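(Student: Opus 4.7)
The plan is to rewrite $[U]_\Omega^T [U]_\Omega$ as a sum of independent, rank-one positive semidefinite $d \times d$ matrices and then apply a matrix Chernoff inequality. Because $\Omega$ is drawn with replacement, letting $i_1, \ldots, i_{|\Omega|}$ be i.i.d.\ uniform on $\{1, \ldots, n\}$ gives
\[
[U]_\Omega^T [U]_\Omega \;=\; \sum_{k=1}^{|\Omega|} X_k, \qquad X_k \;:=\; U_{i_k \cdot}^T U_{i_k \cdot} \in \R^{d \times d}.
\]
Orthonormality of the columns of $U$ yields $\mathbb{E}[X_k] = n^{-1} \sum_{i=1}^n U_{i\cdot}^T U_{i\cdot} = n^{-1} U^T U = n^{-1} I_d$, so $M := \mathbb{E}\bigl[[U]_\Omega^T[U]_\Omega\bigr] = (|\Omega|/n)\, I_d$, with every eigenvalue equal to $|\Omega|/n$. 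Definition~\ref{def:coh} furnishes the uniform almost-sure bound $\lambda_{\max}(X_k) = \|U_{i_k\cdot}\|_2^2 \le d\mu(U)/n =: R$.

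Next I would apply the matrix Chernoff bound of Tropp (Theorem~1.1 of \emph{User-Friendly Tail Bounds for Sums of Random Matrices}) to $\sum_k X_k$ in its Bennett form, which asserts that for $\gamma \in [0,1]$,
\[
\Pr\!\Bigl(\lambda_{\max}\bigl([U]_\Omega^T[U]_\Omega\bigr) \ge (1+\gamma)\,\tfrac{|\Omega|}{n}\Bigr) \;\le\; d\exp\!\Bigl(-\tfrac{3\gamma^2 (|\Omega|/n)}{8 R}\Bigr),
\]
with a companion lower-tail bound whose exponent is no worse. Substituting $R = d\mu(U)/n$ collapses the exponent to $-3\gamma^2 |\Omega|/(8 d\mu(U))$. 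Setting each tail probability to $\delta/2$ and solving for $\gamma$ gives precisely $\gamma = \sqrt{(8 d\mu(U))/(3|\Omega|) \cdot \log(2d/\delta)}$, and a union bound over the two tails yields overall failure probability at most $\delta$. Since every eigenvalue of $[U]_\Omega^T[U]_\Omega$ lies between its extremal eigenvalues, controlling $\lambda_{\min}$ and $\lambda_{\max}$ controls the entire spectrum, proving the stated sandwich bound.

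The only subtle point is that the clean exponential form of the matrix Chernoff bound requires $\gamma \in [0,1]$. The hypothesis $|\Omega| > (8/3)\, d\mu(U) \log(2d/\delta)$ is exactly what forces $\gamma < 1$, so it is tailored precisely to make the tail bound applicable. I do not anticipate any real difficulty beyond selecting the correct version of the matrix Chernoff bound and tracking the constant $8/3$ through the inequality $e^\gamma/(1+\gamma)^{1+\gamma} \le \exp(-3\gamma^2/8)$ valid on $[0,1]$ (and the analogous bound for the lower tail).
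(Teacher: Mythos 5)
Your proposal is correct and reaches exactly the stated bound, but it runs through a different concentration tool than the paper. The paper centers the summands, setting $X_k := u_k u_k^T - \tfrac{1}{n} I_d$ with $u_k$ the sampled row of $U$, and applies the noncommutative Bernstein inequality (Theorem~\ref{bernstein}); this requires computing the variance proxy $\rho^2 \le d\mu(U)/n^2$ in addition to the almost-sure bound $M = d\mu(U)/n$, and imposing $M\tau \le |\Omega|\rho^2$ (which is precisely $\gamma \le 1$) to simplify the denominator, but it then delivers a two-sided operator-norm deviation bound with prefactor $2d$ in a single application, after which the eigenvalue sandwich follows from $\bigl\|\sum_k u_k u_k^T - \tfrac{|\Omega|}{n} I_d\bigr\|_2 \le \gamma |\Omega|/n$. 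You instead keep the uncentered PSD sum and invoke the matrix Chernoff bound, which needs only the almost-sure bound $R = d\mu(U)/n$ and the mean $(|\Omega|/n) I_d$ (no second-moment computation), at the cost of treating the two tails separately, converting the Bennett-form factor via $e^\gamma/(1+\gamma)^{1+\gamma} \le e^{-3\gamma^2/8}$ on $[0,1]$ (with the sharper $e^{-\gamma^2/2}$ available for the lower tail), and a union bound at level $\delta/2$ per tail. Both routes produce the identical exponent $3\gamma^2|\Omega|/(8 d\mu(U))$ and hence the same $\gamma$, and in both the hypothesis $|\Omega| > \tfrac{8}{3} d\mu(U)\log(2d/\delta)$ plays the same role of forcing $\gamma < 1$. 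One detail worth making explicit in your write-up: because $\mathbb{E}\bigl[[U]_\Omega^T [U]_\Omega\bigr]$ is a multiple of the identity, $\mu_{\min} = \mu_{\max} = |\Omega|/n$ in the Chernoff bound, which is what makes the two tails symmetric about $|\Omega|/n$ and lets the single $\gamma$ cover both ends of the spectrum.
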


We conclude this subsection with the following result, which
quantifies the probability that the condition \eqnok{step:checkCTC} is
satisfied. We provide a specific choice for the lower bound $q$ on
sample size that is excessive for current purposes, but useful in
later subsections.
\begin{corollary} \label{co:skips}
Suppose that $\epsilon_t$ satisfies the bounds
\eqnok{eq:epsbound.1} and \eqnok{eq:epsbound.2}.
 Then condition
\eqnok{step:checkCTC} is satisfied with probability at least
$1-\delta$ if $| \Omega_t | \ge q$ and $q$ satisfies
\beq \label{eq:28.1}
\delta \ge 2d \exp \left( \frac{-3q}{64d \mu(\bar{U})} \right).
\eeq
In particular, \eqnok{eq:28.1} is satisfied provided that the
following conditions hold:
\beq \label{eq:28.2}
\delta = .1, \quad
q \ge C_1 (\log n)^2 d \mu(\bar{U}) \log (20 d), \quad
C_1 \ge \frac{64}{3}.
\eeq
\end{corollary}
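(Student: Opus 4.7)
The plan is to invoke Theorem~\ref{th:sampledsingvals} with $U = U_t$, after first using Lemma~\ref{lem:cohUt} to replace the (unknown) coherence $\mu(U_t)$ by a multiple of the (assumed bounded) coherence $\mu(\bar{U})$ of the true subspace. The condition \eqnok{step:checkCTC} required by the algorithm asks that the eigenvalues of $[U_t]_{\Omega_t}^T [U_t]_{\Omega_t}$ lie in $[\tfrac12 |\Omega_t|/n,\;\tfrac32 |\Omega_t|/n]$, which in the notation of Theorem~\ref{th:sampledsingvals} is exactly the statement that $\gamma \le \tfrac12$.

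First, I would observe that the hypothesis \eqnok{eq:epsbound.2} is precisely what is needed to apply Lemma~\ref{lem:cohUt}, yielding $\mu(U_t) \le 2\mu(\bar{U})$. Next, I would apply Theorem~\ref{th:sampledsingvals} to $U_t$ (with sample set $\Omega_t$ and the same parameter $\delta$), noting that its sampling hypothesis $|\Omega_t| > \tfrac{8}{3} d\mu(U_t) \log(2d/\delta)$ is implied by $|\Omega_t| \ge q$ together with the bound on $\mu(U_t)$, once $q$ is large enough. The conclusion then gives $\gamma$ as in \eqnok{eq:def.gamma} with $U=U_t$; demanding $\gamma \le \tfrac12$ is equivalent to
\[
|\Omega_t| \;\ge\; \frac{32\,d\,\mu(U_t)}{3}\,\log\!\Bigl(\frac{2d}{\delta}\Bigr),
\]
and using $\mu(U_t) \le 2\mu(\bar{U})$ this is implied by $|\Omega_t| \ge \tfrac{64}{3} d\mu(\bar{U}) \log(2d/\delta)$. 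Solving this inequality for $\delta$ (and using $|\Omega_t| \ge q$) yields the first displayed condition \eqnok{eq:28.1}.

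For the second claim, I would simply substitute $\delta = 0.1$ and $q = C_1 (\log n)^2 d\mu(\bar{U}) \log(20d)$ into the right-hand side of \eqnok{eq:28.1} and verify algebraically that the inequality holds whenever $C_1 \ge 64/3$. After the $d\mu(\bar{U})$ factors cancel in the exponent, what remains reduces to checking $2d \exp(-\tfrac{3 C_1}{64}(\log n)^2 \log(20 d)) \le 0.1$, equivalently $\log(20d) \le \tfrac{3 C_1}{64}(\log n)^2 \log(20 d)$, which is true provided $(\log n)^2 \ge 64/(3 C_1)$; this holds for $C_1 \ge 64/3$ at any $n \ge 3$.

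There is no real obstacle here, just bookkeeping: the corollary is essentially a repackaging of Theorem~\ref{th:sampledsingvals} applied to $U_t$, with the coherence of $U_t$ bounded by that of $\bar{U}$ via Lemma~\ref{lem:cohUt}. The only delicate point is making sure the two different uses of the sample-size bound (one inside Theorem~\ref{th:sampledsingvals}'s hypothesis, one inside the $\gamma \le \tfrac12$ requirement) are both absorbed by the same choice of $q$, which they are because the stronger of the two is precisely the one giving \eqnok{eq:28.1}.
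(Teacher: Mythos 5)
Your proposal is correct and follows essentially the same route as the paper: bound $\mu(U_t)\le 2\mu(\bar U)$ via Lemma~\ref{lem:cohUt} using \eqnok{eq:epsbound.2}, apply Theorem~\ref{th:sampledsingvals} to $U_t$ (noting its sampling hypothesis is absorbed by the stronger $\gamma\le\tfrac12$ requirement), and observe that $\gamma\le\tfrac12$ is exactly condition \eqnok{step:checkCTC}, which rearranges to \eqnok{eq:28.1}; the verification that \eqnok{eq:28.2} implies \eqnok{eq:28.1} (dropping the $(\log n)^2\ge 1$ factor) matches the paper's as well.
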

\begin{proof}
Note first that \eqnok{eq:28.1} is equivalent to
\beq \label{eq:28.3}
\log \left( \frac{2d}{\delta}\right) \le
\frac{3q}{64 d \mu(\bar{U})}.
\eeq
Since \eqnok{eq:epsbound.2} is assumed to hold, we can apply 
Lemma~\ref{lem:cohUt} to obtain
\[
| \Omega_t| \ge q \ge \frac{64}{3} d \mu(\bar{U}) \log \left( \frac{2d}{\delta}\right)
\ge \frac{32}{3} d \mu(U_t) \log \left( \frac{2d}{\delta}\right),
\]
so that the condition of Theorem~\ref{th:sampledsingvals} is
satisfied, with a factor of $4$ to spare.  By applying this theorem,
we obtain
\beq \label{eq:gamsq}
\gamma^2 = \frac{8d \mu(U_t)}{3 | \Omega_t|} 
\log \left( \frac{2d}{\delta}\right) 
\le \frac{16d \mu(\bar{U})}{3q} 
 \log \left( \frac{2d}{\delta}\right) 
\le \frac14,
\eeq
with the final inequality following from \eqnok{eq:28.1}. Thus
\eqnok{step:checkCTC} is satisfied with probability at least
$1-\delta$.

We now verify that \eqnok{eq:28.2} implies \eqnok{eq:28.1}. From the
inequality for $q$ in \eqnok{eq:28.2}, and the assumption that $C_1
\ge 64/3$, we have
\[
q \ge C_1 (\log n)^2 d \mu(\bar{U}) \log (20d)  \ge
\frac{64}{3} d \mu(\bar{U}) \log (20 d).
\]
We obtain by rearranging this expression that
\[
.1 \ge 2d \exp \left( \frac{-3q}{64 d \mu(\bar{U})} \right),
\]
so that the second condition in \eqnok{eq:28.1} holds for the specific
values of $\delta$ and $q$. Thus, we have shown that the values of
$\delta$ and $q$ in \eqnok{eq:28.2} satisfy \eqnok{eq:28.1}, so that
\eqnok{step:checkCTC} is satisfied with probability at least $.9$ for
these values of $\delta$ and $q$.
\end{proof}

\subsection{A High-Probability Lower Bound on $\| r_t\|^2/\|p_t\|^2$}
\label{sec:lb}


Theorem~\ref{th:sampledsingvals} can be used to derive a
high-probability result for a lower bound on the quantity $\| r_t \|^2
/ \|p_t \|^2$, which is the key part of the the error decrease
expression \eqnok{eq:Udecrease.8} and is therefore critical to our
analysis.

We have the following result, which is the main result of
\cite{BalRN10a} and is proved there.
\begin{lemma} \label{lem:resid}
Let $\delta>0$ be given, and suppose that 
\beq \label{eq:Olb}
|\Omega_t| > \frac{8}{3} d
\mu(U_t) \log\left(\frac{2d}{\delta}\right).
\eeq
 Then with probability at least $1-3\delta$, we have
\beq
\label{eq:incresidlower}
\|[v_t]_{\Omega_t} - [p_t]_{\Omega_t}\|_2^2  \geq \left(\frac{|\Omega_t|(1-\xi_t) - d \mu(U_t)\frac{(1+\beta_t)^2}{1-\gamma_t}}{n}\right) \|v_t - U_t U_t^T v_t \|_2^2  ,
\eeq
where we define $x_t := v_t - U_tU_t^T v_t$, set $\gamma_t$ as in
\eqnok{eq:def.gamma}, and define
\beq \label{eq:def.etabeta}
\xi_t :=
\sqrt{\frac{2\mu(x_t)^2}{|\Omega_t|}  \log\left(\frac{1}{\delta}\right)}, 
\quad 
\beta_t := \sqrt{2  \mu(x_t) \log\left(\frac{1}{\delta}\right)}.
\eeq
\end{lemma}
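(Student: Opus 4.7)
The plan is to reduce $\|[v_t]_{\Omega_t}-[p_t]_{\Omega_t}\|^2$ to a sum/difference of three quantities that can each be controlled by an independent concentration bound, and then to assemble them with a union bound.

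First I would write $v_t = U_t U_t^T v_t + x_t$, so that $U_t^T x_t = 0$ by construction. Restricting to $\Omega_t$ and using that $[p_t]_{\Omega_t}$ is the orthogonal projection of $[v_t]_{\Omega_t}$ onto $R([U_t]_{\Omega_t})$, the residual equals $P_{N([U_t]_{\Omega_t}^T)}[x_t]_{\Omega_t}$, so
\[
\|[v_t]_{\Omega_t}-[p_t]_{\Omega_t}\|^2
= \|[x_t]_{\Omega_t}\|^2
- [x_t]_{\Omega_t}^T [U_t]_{\Omega_t} \bigl([U_t]_{\Omega_t}^T [U_t]_{\Omega_t}\bigr)^{-1} [U_t]_{\Omega_t}^T [x_t]_{\Omega_t}.
\]
This identifies three quantities to control: a lower bound on $\|[x_t]_{\Omega_t}\|^2$, an upper bound on $\|[U_t]_{\Omega_t}^T[x_t]_{\Omega_t}\|$, and an upper bound on $\|([U_t]_{\Omega_t}^T[U_t]_{\Omega_t})^{-1}\|$.

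For the first piece, since $\Omega_t$ is sampled uniformly with replacement, $\|[x_t]_{\Omega_t}\|^2$ is a sum of $|\Omega_t|$ iid scalars with common mean $\|x_t\|^2/n$ and each bounded by $\|x_t\|_\infty^2 = \mu(x_t)\|x_t\|^2/n$. A scalar Hoeffding/Bernstein bound delivers $\|[x_t]_{\Omega_t}\|^2 \ge (|\Omega_t|/n)(1-\xi_t)\|x_t\|^2$ with probability at least $1-\delta$, for $\xi_t$ as in \eqnok{eq:def.etabeta}; the deviation scale $\mu(x_t)^2/|\Omega_t|$ appears because the per-sample range is a factor $\mu(x_t)$ larger than the mean. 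For the second piece I would apply a vector Bernstein inequality to the zero-mean sum $[U_t]_{\Omega_t}^T[x_t]_{\Omega_t} = \sum_k (U_t)_{\omega_k\cdot}^T (x_t)_{\omega_k}$: the per-sample norm is governed by $\|(U_t)_{i\cdot}\|\cdot\|x_t\|_\infty$ (through $\mu(U_t)$ and $\mu(x_t)$), and the matrix variance by $\|x_t\|^2 \max_i \|(U_t)_{i\cdot}\|^2/n$; this yields
\[
\|[U_t]_{\Omega_t}^T [x_t]_{\Omega_t}\|^2 \;\le\; \frac{|\Omega_t|\, d\, \mu(U_t)\,(1+\beta_t)^2}{n^2}\, \|x_t\|^2
\]
with probability at least $1-\delta$, with $\beta_t$ as in \eqnok{eq:def.etabeta}. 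For the third piece, the sample-size hypothesis \eqnok{eq:Olb} is exactly the threshold needed to apply Theorem \ref{th:sampledsingvals} to $U_t$, yielding $\|([U_t]_{\Omega_t}^T[U_t]_{\Omega_t})^{-1}\| \le n/((1-\gamma_t)|\Omega_t|)$ with probability at least $1-\delta$.

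Substituting these three bounds into the decomposition and taking a union bound over the three failure events produces \eqnok{eq:incresidlower} with probability at least $1-3\delta$. The main obstacle is the vector Bernstein step for $[U_t]_{\Omega_t}^T [x_t]_{\Omega_t}$: one must simultaneously track $\mu(U_t)$, which governs the variance through the row norms of $U_t$, and $\mu(x_t)$, which governs the sub-exponential tail scale and thus enters through $\beta_t$, and fit the resulting bound into the exact $(1+\beta_t)^2 d\mu(U_t)/n$ form demanded by the statement. This is also the step that explains why the coherence of the \emph{unexplained} part $x_t$ must appear in the analysis, consistent with the assumption on $\mu(x_t)$ flagged in the introduction.
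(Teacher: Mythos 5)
The paper offers no proof of this lemma itself---it is imported verbatim as the main result of the cited reference \cite{BalRN10a}---and the argument given there is essentially the one you outline: write the residual as $P_{N([U_t]_{\Omega_t}^T)}[x_t]_{\Omega_t}$, lower-bound $\|[x_t]_{\Omega_t}\|^2$ by $(1-\xi_t)\frac{|\Omega_t|}{n}\|x_t\|^2$, upper-bound $\|[U_t]_{\Omega_t}^T[x_t]_{\Omega_t}\|^2$ by $(1+\beta_t)^2\frac{|\Omega_t|\,d\,\mu(U_t)}{n^2}\|x_t\|^2$, control $\|([U_t]_{\Omega_t}^T[U_t]_{\Omega_t})^{-1}\|$ by $\frac{n}{(1-\gamma_t)|\Omega_t|}$ via the concentration result underlying Theorem~\ref{th:sampledsingvals}, and combine the three $\delta$-probability failure events with a union bound to reach $1-3\delta$. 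Your proposal is therefore correct and follows essentially the same route as the cited proof.
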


We focus now on the factor in parentheses in \eqnok{eq:incresidlower},
proposing conditions on $\mu(x_t)$ and $q$ under which it can be
bounded below. The conditions on $\mu(x_t)$ are meant to be
``realistic'' in the sense that this quantity is observed to vary like
$\log n$ in practice, so the upper bounds are designed to be a
(possibly large) multiple of this quantity.

\begin{lemma} \label{lem:paren}
Suppose that $\delta=.1$. Suppose that on some iteration $t$, we have
that $| \Omega_t| \ge q$, where $q$ and $C_1$ satisfy the bounds
\eqnok{eq:28.2}
%
Suppose that $\epsilon_t$ satisfies the bounds \eqnok{eq:epsbound.1}
and \eqnok{eq:epsbound.2} for this value of $q$, and that $\mu(x_t)$
satisfies the following two upper bounds
\begin{subequations} \label{eq:muxt}
\begin{align}
\label{eq:muxt.a}
\mu(x_t) &\le \log n \left[ \frac{.045}{\log 10} C_1 d \mu(\bar{U}) \log (20 d) \right]^{1/2} \\
\label{eq:muxt.b}
\mu(x_t) & \le (\log n)^2 \left[ \frac{.05}{8 \log 10} C_1 \log (20 d) \right],
\end{align}
where $x_t = v_t - U_t U_t^T v_t$ as in  Lemma~\ref{lem:resid}.
\end{subequations}
Then we have 
\beq \label{eq:parenlb}
|\Omega_t|(1-\xi_t) - d \mu(U_t)\frac{(1+\beta_t)^2}{1-\gamma_t} \ge \frac{q}{2},
\eeq
where $\xi_t$, $\beta_t$, and $\gamma_t$ are as defined in
Lemma~\ref{lem:resid}.
\end{lemma}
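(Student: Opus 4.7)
The plan is to bound each of the four quantities $\gamma_t$, $\mu(U_t)$, $\xi_t$, and $\beta_t$ under the stated hypotheses and then assemble the pieces. Two of these bounds are essentially already established. Since \eqnok{eq:epsbound.2} is in force, Lemma~\ref{lem:cohUt} yields $\mu(U_t) \le 2\mu(\bar{U})$. Similarly, the estimate \eqnok{eq:gamsq} in the proof of Corollary~\ref{co:skips}, which uses only the conditions \eqnok{eq:28.2}, gives $\gamma_t^2 \le 1/4$, so that $1/(1-\gamma_t) \le 2$. The condition \eqnok{eq:Olb} required by Lemma~\ref{lem:resid} is also a consequence of \eqnok{eq:28.2} together with $\mu(U_t) \le 2\mu(\bar U)$, so the lemma may be invoked in what follows.

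Next I would control $\xi_t$ by direct substitution. From $\xi_t^2 = 2 \mu(x_t)^2 \log(1/\delta)/|\Omega_t|$ with $\delta = 0.1$, the hypothesis \eqnok{eq:muxt.a}, and the lower bound $|\Omega_t| \ge q \ge C_1 (\log n)^2 d \mu(\bar{U}) \log(20d)$, the factors of $\log 10$, $(\log n)^2$, $d \mu(\bar{U})$, $\log(20d)$, and $C_1$ all cancel cleanly, leaving $\xi_t^2 \le 0.09$. Hence $\xi_t \le 0.3$, and therefore $|\Omega_t|(1 - \xi_t) \ge 0.7\, |\Omega_t| \ge 0.7\, q$. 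The bound \eqnok{eq:muxt.a} has been calibrated precisely so that this cancellation produces a constant strictly less than one.

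For the second term of \eqnok{eq:parenlb}, the key idea is to route $\beta_t$ through its square via $(1+\beta_t)^2 \le 2(1 + \beta_t^2)$. From $\beta_t^2 = 2 \mu(x_t) \log(1/\delta)$ and \eqnok{eq:muxt.b}, one obtains $\beta_t^2 \le C_1 (\log n)^2 \log(20d)/80$. Combining with $\mu(U_t) \le 2 \mu(\bar U)$ and $1/(1-\gamma_t) \le 2$ yields
\[
d \mu(U_t) \frac{(1+\beta_t)^2}{1-\gamma_t} \;\le\; 8 d \mu(\bar U) + \frac{C_1 (\log n)^2 d \mu(\bar U)\log(20d)}{10} \;\le\; 8 d \mu(\bar U) + \frac{q}{10},
\]
where the final step uses the hypothesis on $q$. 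Assembling,
\[
|\Omega_t|(1 - \xi_t) - d \mu(U_t) \frac{(1+\beta_t)^2}{1-\gamma_t} \;\ge\; 0.7\, q - 8 d \mu(\bar U) - \frac{q}{10} \;=\; 0.6\, q - 8 d \mu(\bar U),
\]
which exceeds $q/2$ whenever $q \ge 80\, d \mu(\bar U)$; this in turn follows from $q \ge (64/3) (\log n)^2 d \mu(\bar{U}) \log(20d)$ for any reasonable $n,d$ (say $n \ge 4$ and $d \ge 1$, making $(\log n)^2 \log(20d) \ge 4$). The proof is thus largely arithmetic bookkeeping. The one subtle point I expect is the $\beta_t^2$ term: naively, $\beta_t$ grows like $\log n \sqrt{\log d}$, and a sloppy treatment of $(1+\beta_t)^2$ would spoil the bound. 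Absorbing this contribution into the $(\log n)^2 \log(20d)$ factor built into the lower bound on $q$, by way of $(1+\beta_t)^2 \le 2(1 + \beta_t^2)$ and the precise calibration of \eqnok{eq:muxt.b}, is the delicate step.
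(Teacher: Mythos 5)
Your proposal is correct, and its skeleton is the same as the paper's: bound $\xi_t$, $\gamma_t$, $\mu(U_t)$, and $(1+\beta_t)^2$ separately using \eqnok{eq:muxt.a}, \eqnok{eq:muxt.b}, \eqnok{eq:28.2}, and Lemma~\ref{lem:cohUt}, then combine to get $0.7q$ minus the second term. The one place you diverge is the treatment of $(1+\beta_t)^2$. The paper observes that $\mu(x_t)\ge 1$ always (since $\mu(x)=n\|x\|_\infty^2/\|x\|_2^2\ge 1$), hence $\beta_t=\sqrt{2\mu(x_t)\log 10}\ge 1$ and $(1+\beta_t)^2\le 4\beta_t^2=8\mu(x_t)\log 10\le (.05)\,q/(d\mu(\bar U))$ by \eqnok{eq:muxt.b} and \eqnok{eq:28.2}; with $\mu(U_t)\le 2\mu(\bar U)$ and $1-\gamma_t\ge .5$ this gives the second term $\le .2q$ and the clean conclusion $.7q-.2q=q/2$ with no further assumptions. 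Your route via $(1+\beta_t)^2\le 2(1+\beta_t^2)$ leaves the extra additive remainder $8d\mu(\bar U)$, which you must absorb by requiring $q\ge 80\,d\mu(\bar U)$; this is not literally among the hypotheses and your appeal to ``reasonable $n,d$'' (essentially $n\ge 4$, so that $\tfrac{64}{3}(\log n)^2\log(20d)\ge 80$) is a genuine, if mild, extra condition (it fails, e.g., for $n=2$ or $3$). So the paper's use of $\beta_t\ge 1$ buys exact constants and no side conditions, while your version is marginally weaker; everything else (the $\xi_t^2\le .09$ cancellation, the $\gamma_t^2\le 1/4$ bound via \eqnok{eq:gamsq}, and $\mu(U_t)\le 2\mu(\bar U)$) matches the paper's proof. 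Your remark about verifying \eqnok{eq:Olb} is harmless but unnecessary here, since the lemma only asserts the deterministic inequality \eqnok{eq:parenlb} for the quantities $\xi_t,\beta_t,\gamma_t$ as defined, without invoking the probabilistic conclusion of Lemma~\ref{lem:resid}.
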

\begin{proof}
We show first that $\xi_t \le .3$, where $\xi_t$ is defined in
\eqnok{eq:def.etabeta}. Since $| \Omega_t| \ge q$ and $\delta=.1$, we
have
\begin{alignat*}{2}
\xi_t^2 &= \frac{2 \mu(x_t)^2}{| \Omega_t|} \log \frac{1}{\delta} \\
& \le \frac{2 \mu(x_t)^2}{q} \log 10 \\
& \le \frac{1}{q} \left[ 2 (\log n)^2 \frac{(.045)C_1}{\log 10} \log (20 d) d \mu(\bar{U}) \right] \log 10 \quad && \makebox{\rm by \eqnok{eq:muxt.a}} \\
& \le \frac{(.09) q}{q} \quad && \makebox{by \eqnok{eq:28.2}} \\
&= .09,
\end{alignat*}
establishing the claim.

We show next that the last term on the left-hand side of
\eqnok{eq:parenlb} is bounded by $.2 q$. The first step is to verify
that $\gamma_t \le .5$ for $\gamma_t$ defined in \eqnok{eq:def.gamma}.
When $\delta=.1$ and $q$ and $C_1$ satisfy the bounds \eqnok{eq:28.2}
(as we assume here), we have 
\[
\gamma_t^2 = \frac{8d \mu(U_t)}{3 | \Omega_t|} \log \left( \frac{2d}{\delta} \right) 
\le \frac{16d \mu(\bar{U})}{3 | \Omega_t|} \log (20d) 
\le \frac{C_1 d \mu(\bar{U})}{4q} \log (20d) \le \frac14,
\]
where we used Lemma~\ref{lem:cohUt} in the first inequality and
\eqnok{eq:28.2} for the remaining inequalities.

We obtain next a bound on $(1+\beta_t)^2$.  From the definition of
$\beta_t$ and the fact that $\mu(x_t) \ge 1$, we have
\[
\beta_t = \sqrt{2 \mu(x_t) \log \frac{1}{\delta} } = 
\sqrt{2 \mu(x_t) \log 10} \ge 1,
\]
so that 
\begin{alignat*}{2}
(1+\beta_t)^2 & \le (2 \beta_t)^2 &&\\
& \le 8 \mu(x_t) \log 10 && \\
& \le (\log n)^2 (.05) C_1 \log (20 d) && \quad \makebox{by \eqnok{eq:muxt.b}} \\
& \le (.05) \frac{q}{d \mu(\bar{U})} && \quad \makebox{by \eqnok{eq:28.2}.}
\end{alignat*}

By using these last two bounds in conjunction with
Lemma~\ref{lem:cohUt}, we obtain
\[
d \mu(U_t) \frac{(1+\beta_t)^2}{1-\gamma_t} \le 2d \mu(\bar{U}) 
\frac{(.05) \frac{q}{d \mu(\bar{U})}}{.5} \le \frac{.1}{.5} q = .2q. 
\]
The result \eqnok{eq:parenlb} follows by combining this bound with
$1-\xi_t \ge .7$ (proved earlier) and $| \Omega_t | \ge q$ (assumed).
\end{proof}


We now derive a high-probability lower bound on $\| r_t \|^2/\|p_t
\|^2$.
\begin{lemma} \label{lem:rplow}
Suppose that $| \Omega_t | \ge q$ for all $t$, where $q$ and $C_1$
satisfy the bounds \eqnok{eq:28.2}. Suppose that
$\epsilon_t$ satisfies the bounds \eqnok{eq:epsbound.1} and 
\eqnok{eq:epsbound.2}. Suppose that
there is a quantity $\bar{\delta} \in (0,.6)$ such that the bounds
\eqnok{eq:muxt} are satisfied by $x_t = v_t - U_t U_t^T v_t$ with
probability at least $1-\bar{\delta}$. Then with probability at least
$.6-\bar{\delta}$, we have that
\[
\frac{\|r_t\|^2}{\|p_t\|^2} \ge (.32) \frac{q}{n} \sin^2 \theta_t,
\]
where $\theta_t$ is the angle between $v_t$ and $R(U_t)$.
\end{lemma}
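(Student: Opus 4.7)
The plan is to combine three earlier results: Lemma~\ref{lem:resid} gives a lower bound on $\|r_t\|^2 = \|[v_t]_{\Omega_t} - [p_t]_{\Omega_t}\|^2$ in terms of $\|x_t\|^2$, Lemma~\ref{lem:paren} controls the parenthesized sampling factor that appears there, and Lemma~\ref{lem:rps} supplies the upper bound on $\|p_t\|^2$. A geometric identity then converts $\|x_t\|^2$ into $\|v_t\|^2\sin^2\theta_t$, after which everything assembles into the claimed inequality. The novelty is not in the manipulations but in tracking which randomness each ingredient consumes, so that the three high-probability statements can be combined via a union bound.

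First I would verify the hypotheses of Lemma~\ref{lem:resid} with $\delta=.1$. By Lemma~\ref{lem:cohUt} we have $\mu(U_t)\le 2\mu(\bar U)$, so the required lower bound $|\Omega_t|>(8/3)d\mu(U_t)\log(2d/\delta)$ is implied (with room to spare) by \eqnok{eq:28.2}. Applying Lemma~\ref{lem:resid} then yields, with probability at least $1-3\delta=.7$,
\[
\|r_t\|^2 \ge \left(\frac{|\Omega_t|(1-\xi_t)-d\mu(U_t)\tfrac{(1+\beta_t)^2}{1-\gamma_t}}{n}\right)\|x_t\|^2.
\]
Next, conditioning on the event (of probability at least $1-\bar\delta$) that the coherence bounds \eqnok{eq:muxt} hold for $x_t$, Lemma~\ref{lem:paren} tells us the parenthesized factor is at least $q/2$. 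On the intersection of these two events, therefore,
\[
\|r_t\|^2 \ge \frac{q}{2n}\|x_t\|^2.
\]

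Now I convert $\|x_t\|^2$ to $\|s_t\|^2\sin^2\theta_t$. Since $x_t=(I-U_tU_t^T)v_t$ is the orthogonal projection of $v_t$ onto $R(U_t)^{\perp}$, the defining relation $\sin\theta_t = \|x_t\|/\|v_t\|$ gives $\|x_t\|^2=\|v_t\|^2\sin^2\theta_t$, and since $\bar U$ has orthonormal columns we also have $\|v_t\|=\|\bar U s_t\|=\|s_t\|$. Thus $\|r_t\|^2\ge (q/(2n))\|s_t\|^2\sin^2\theta_t$. To upper-bound $\|p_t\|^2$ I invoke Lemma~\ref{lem:rps}, whose hypothesis \eqnok{step:checkCTC} will be automatically fulfilled on the high-probability event used by Lemma~\ref{lem:resid}; it yields $\|p_t\|^2\le(25/16)\|s_t\|^2$. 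Dividing,
\[
\frac{\|r_t\|^2}{\|p_t\|^2}\;\ge\;\frac{16}{25}\cdot\frac{q}{2n}\sin^2\theta_t \;=\;\frac{8}{25}\,\frac{q}{n}\sin^2\theta_t,
\]
and $8/25=0.32$ matches the claim exactly.

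The main obstacle, and the place where care is most needed, is the probability bookkeeping. The randomness comes from two independent sources: the sample set $\Omega_t$ (governing the eigenvalue check, $\xi_t$, $\beta_t$, $\gamma_t$, and hence both Lemma~\ref{lem:resid} and Lemma~\ref{lem:rps}), and the vector $v_t=\bar U s_t$ (governing $\mu(x_t)$ and hence the applicability of Lemma~\ref{lem:paren}). Lemma~\ref{lem:resid}'s $1-3\delta$ absorbs the eigenvalue event together with the two tail bounds for $\xi_t$ and $\beta_t$; a short union-bound argument combining this with the $1-\bar\delta$ event from Lemma~\ref{lem:paren}, and accounting (via Corollary~\ref{co:skips}) for the separate probability that \eqnok{step:checkCTC} also holds so that Lemma~\ref{lem:rps} is available, delivers the advertised probability of at least $.6-\bar\delta$. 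Once that union bound is assembled, the final inequality follows by chaining the three displayed bounds above.
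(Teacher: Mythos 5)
Your proposal is correct and follows essentially the same route as the paper: apply Lemma~\ref{lem:resid} together with Lemma~\ref{lem:paren} to get $\|r_t\|^2 \ge \frac{q}{2n}\|x_t\|^2$, use Lemma~\ref{lem:rps} for $\|p_t\| \le \frac54 \|s_t\|$, identify $\|x_t\|^2/\|v_t\|^2 = \sin^2\theta_t$, and union-bound the three events (eigenvalue check via Corollary~\ref{co:skips}, the residual bound, and the $\mu(x_t)$ bounds) to get probability $.6-\bar\delta$. The only quibble is your mid-proof remark that \eqnok{step:checkCTC} is ``automatically fulfilled'' on Lemma~\ref{lem:resid}'s event --- the paper (and your own final paragraph) instead charges it as a separate failure probability of $.1$ via Corollary~\ref{co:skips}, which is exactly why the final probability is $.6-\bar\delta$ rather than $.7-\bar\delta$.
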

\begin{proof}
Since we assume \eqnok{eq:28.2}, and thus that $\delta=.1$, we have 
from Corollary~\ref{co:skips} that the check on the eigenvalues of 
$[U_t]_{\Omega_t}^T[U_t]_{\Omega_t}$ in
\eqnok{step:checkCTC} is satisfied with
probability at least $1-\delta = 1-.1 = .9$. From
Lemma~\ref{lem:resid}, we have that \eqnok{eq:incresidlower} holds
with probability at least $1-3 \delta = 1-.3 = .7$. We have assumed
further that \eqnok{eq:muxt} holds with probability
$1-\bar{\delta}$. Thus, from the union bound, we have under our
assumptions that the bounds \eqnok{step:checkCTC},
\eqnok{eq:incresidlower}, and \eqnok{eq:muxt} all hold with
probability at least $.6-\bar{\delta}$.  Since, in particular, the
conditions \eqnok{step:checkCTC}, \eqnok{eq:Obound}, and
\eqnok{eq:epsbound.1} are satisfied under this scenario, we have
from Lemma~\ref{lem:rps} that 
\[
\| p_t \| \le \frac54 \| s_t \| = \frac54 \| v_t \|.
\]
By using this bound together with the definitions of
$[r_t]_{\Omega_t}$ and $[r_t]_{\Omega_t^c}$ in
Algorithm~\ref{grouse:partial} and Lemmas~\ref{lem:resid} and
\ref{lem:paren}, we obtain
\[
\frac{\|r_t\|^2}{\|p_t\|^2} 
\ge \frac{16}{25} \frac{\|[r_t]_{\Omega_t}\|^2}{\|s_t\|^2}
= \frac{16}{25} \frac{\|[v_t]_{\Omega_t} - [p_t]_{\Omega_t} \|^2}{\|v_t\|^2} 
\ge \frac{16}{25} \frac{q}{2n} 
\frac{\|v_t - U_t U_t^T v_t \|_2^2}{\|v_t\|^2}.
\]
Using orthonormality of the columns of $U_t$ and the definition of $\cos \theta_t$,
we obtain
\[
 \frac{\|v_t - U_t U_t^T v_t \|_2^2}{\|v_t\|^2} =
\frac{\| v_t\|^2 - v_t^TU_t U_t^T v_t}{\|v_t\|^2} =
1-\frac{[v_t^T(U_t U_t^T v_t)]^2}{\|v_t\|^2 \| U_t U_t^T v_t\|^2} =
1-\cos^2 \theta_t = \sin^2 \theta_t.
\]
We complete the proof by combining the last two expressions.
\end{proof}

\subsection{Expectation for the Angle Captured by $v_t$}
\label{sec:expsin}

Here we obtain an expected value for the quantity $\sin^2 \theta_t$,
where $\theta_t$ is the angle between the (full) random sample vector
and the subspace $R(U_t)$. Noting that $v_t = \bar{U} s_t$, where
$s_t$ is random, we have
\beq \label{eq:cos.theta}
\cos^2 \theta_t = \frac{[(\bar{U} s_t)^T (U_t U_t^T \bar{U} s_t)]^2}{\| \bar{U} s_t\|^2 \| U_t U_t^T \bar{U} s_t \|^2} = 
\frac{s_t^T \bar{U}^T U_t U_t^T \bar{U} s_t}{\| s_t \|^2}.
\eeq

We start with two elementary technical results.
\begin{lemma} \label{lem:esi}
Let $w \in \R^d$ be a random vector whose components $w_i$,
$i=1,2,\dotsc,d$ are independent and identically distributed. Then
\beq \label{eq:esi}
E_w \left( \frac{w_i^2}{\sum_{j=1}^d w_j^2} \right) = \frac{1}{d}.
\eeq
\end{lemma}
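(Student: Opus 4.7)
The plan is to exploit the symmetry afforded by the i.i.d.\ assumption on the components of $w$. Specifically, since the joint distribution of $(w_1, w_2, \dotsc, w_d)$ is invariant under permutations of the indices, the random variable $w_i^2 / \sum_{j=1}^d w_j^2$ has the same distribution for every $i = 1, 2, \dotsc, d$. In particular, the expected value in \eqnok{eq:esi} does not depend on $i$.

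Next, I would sum the identity over $i$ and pull the sum inside the expectation (by linearity of expectation), obtaining
\[
\sum_{i=1}^d E_w\!\left( \frac{w_i^2}{\sum_{j=1}^d w_j^2} \right) = E_w \!\left( \frac{\sum_{i=1}^d w_i^2}{\sum_{j=1}^d w_j^2} \right) = E_w(1) = 1.
\]
Combined with the fact that all $d$ summands on the left-hand side are equal, this immediately yields the claim that each summand equals $1/d$.

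The only real subtlety is the event $\sum_{j=1}^d w_j^2 = 0$, on which the ratio is undefined. For the intended application in \eqnok{eq:cos.theta} the components of $w=s_t$ are i.i.d.\ $\mathcal{N}(0,1)$, so this event has probability zero and can be neglected; more generally the lemma should be understood under the implicit assumption that $P(\sum_j w_j^2 = 0) = 0$, in which case the argument above goes through without modification. Beyond that caveat there is no real obstacle — the proof is a two-line symmetry-plus-linearity argument.
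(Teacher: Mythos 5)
Your proof is correct and follows essentially the same route as the paper: both use linearity of expectation to write $1 = \sum_{i=1}^d E\bigl( w_i^2 / \sum_j w_j^2 \bigr)$ and then invoke the identical distribution of the summands to conclude each equals $1/d$. Your added remark that one needs $P\bigl(\sum_j w_j^2 = 0\bigr) = 0$ is a reasonable refinement that the paper leaves implicit (and holds in the Gaussian application).
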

\begin{proof}
By the additive property of expectation, we have
\[
1 = E \left( \frac{\sum_{i=1}^d w_i^2}{\sum_{j=1}^d w_j^2} \right) 
= \sum_{i=1}^d E \left( \frac{w_i^2}{\sum_{j=1}^d w_j^2} \right) = 
d E \left( \frac{w_i^2}{\sum_{j=1}^d w_j^2} \right), \;\; 
i=1,2,\dotsc,d,
\]
since each of the $w_i$ is identically distributed.
\end{proof}

\begin{lemma} \label{lem:es}
Given any matrix $Q \in \R^{d \times d}$, suppose that $w \in \R^d$ is
a random vector whose components $w_i$, $i=1,2,\dotsc,d$ are all
i.i.d. $\mathcal{N}(0,1)$. Then
\[
E \left( \frac{w^TQw}{w^Tw} \right) =\frac{1}{d} \trace \, Q.
\]
\end{lemma}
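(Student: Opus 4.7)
The plan is to exploit the rotational invariance of the isotropic standard Gaussian together with Lemma~\ref{lem:esi}. Since the quadratic form $w^TQw$ depends only on the symmetric part $\tilde{Q} := (Q+Q^T)/2$, and since $\trace Q = \trace \tilde{Q}$, I will replace $Q$ by $\tilde{Q}$ at the outset, so that I may assume $Q$ is symmetric without loss of generality.

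Next, I would diagonalize the symmetric matrix as $\tilde{Q} = V \Lambda V^T$ with $V$ orthogonal and $\Lambda = \mbox{diag}(\lambda_1,\ldots,\lambda_d)$, and make the change of variables $u := V^T w$. Because $w \sim \mathcal{N}(0,I)$ and the standard Gaussian is invariant under orthogonal transformations, the vector $u$ also has i.i.d.\ $\mathcal{N}(0,1)$ components. Under this change of variables, $w^T\tilde{Q}w = \sum_i \lambda_i u_i^2$ while $w^Tw = u^Tu = \sum_j u_j^2$, so
\[
E\left( \frac{w^TQw}{w^Tw} \right) = \sum_{i=1}^d \lambda_i \, E\left( \frac{u_i^2}{\sum_{j=1}^d u_j^2} \right).
\]

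Finally, I would apply Lemma~\ref{lem:esi} directly to the right-hand side: since the $u_i$ are i.i.d., each expectation equals $1/d$, giving
\[
E\left( \frac{w^TQw}{w^Tw} \right) = \frac{1}{d}\sum_{i=1}^d \lambda_i = \frac{1}{d}\trace\tilde{Q} = \frac{1}{d}\trace Q,
\]
as required. There is no real obstacle here — the only point that needs a line of justification is the invariance of the isotropic Gaussian under the orthogonal map $V^T$, which is standard. The whole argument reduces the general statement to the scalar identity already established in Lemma~\ref{lem:esi}.
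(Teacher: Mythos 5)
Your proof is correct, but it follows a different route from the paper's. The paper expands the quadratic form entrywise, writing $E(w^TQw/w^Tw) = \sum_{i\neq j} E(w_iw_jQ_{ij}/\|w\|^2) + \sum_{i} Q_{ii}\, E(w_i^2/\|w\|^2)$, observes that the off-diagonal terms vanish because $E(w_iw_j/\|w\|^2)=0$ for $i\neq j$ (a consequence of the sign symmetry of the components, since flipping the sign of $w_i$ leaves $\|w\|^2$ unchanged), and then applies Lemma~\ref{lem:esi} to the diagonal terms --- no symmetrization or diagonalization is needed. You instead replace $Q$ by its symmetric part, diagonalize as $V\Lambda V^T$, and rotate $w$ to $u=V^Tw$ using the orthogonal invariance of the isotropic Gaussian, and only then invoke Lemma~\ref{lem:esi}. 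Both arguments are sound. The paper's version is more elementary and in fact uses less about the distribution: it would go through for any i.i.d.\ components symmetric about zero, since Lemma~\ref{lem:esi} needs only the i.i.d.\ property and the cross terms die by sign symmetry, whereas your rotation step genuinely requires Gaussianity (rotational invariance of the joint law). What your spectral route buys is transparency: it exhibits the expectation as the uniform average of the eigenvalues of the symmetric part of $Q$, and it isolates in a single, clearly flagged step the only distributional fact being used.
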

\begin{proof}
\begin{align*}
E \left( \frac{w^TQw}{w^Tw} \right) & = 
\sum_{i \neq j} E \left( \frac{w_i w_j Q_{ij}}{\|w\|^2} \right) +
\sum_{i=1}^n E \left( \frac{w_i^2 Q_{ii}}{\|w\|^2} \right) \\
&= \sum_{i=1}^n Q_{ii}  E \left( \frac{w_i^2}{\|w\|^2} \right) = \frac{1}{d} \trace \, Q,
\end{align*}
where the second equality follows from Lemma~\ref{lem:esi} and the
fact that $E(w_iw_j/\|w\|^2) = 0$ for $i \neq j$.
\end{proof}

The main result of this subsection follows.
\begin{lemma} \label{lem:esint}
Suppose that $s_t \in \R^d$ is a random vector whose components are
i.i.d. $\mathcal{N}(0,1)$.  Then
\[
E(\sin^2 \theta_t) = \epsilon_t/d,
\]
where $\epsilon_t$ is defined in \eqnok{eq:epst}.
\end{lemma}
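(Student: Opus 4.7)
The plan is to reduce this to a direct application of Lemma~\ref{lem:es}. Starting from the expression \eqref{eq:cos.theta} already derived, I would write
\[
\sin^2 \theta_t = 1 - \cos^2 \theta_t = \frac{s_t^T(I - \bar{U}^T U_t U_t^T \bar{U}) s_t}{s_t^T s_t}.
\]
Let $Q := I - \bar{U}^T U_t U_t^T \bar{U} \in \R^{d \times d}$. Since the components of $s_t$ are i.i.d. $\mathcal{N}(0,1)$, Lemma~\ref{lem:es} immediately gives
\[
E(\sin^2 \theta_t) = \frac{1}{d} \trace\, Q.
\]

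Next I would compute the trace. By linearity and the cyclic property,
\[
\trace\, Q = \trace(I_{d \times d}) - \trace(\bar{U}^T U_t U_t^T \bar{U}) = d - \trace(U_t^T \bar{U}\bar{U}^T U_t) = d - \|\bar{U}^T U_t\|_F^2.
\]
By the definition \eqref{eq:epst} of $\epsilon_t$, the right-hand side equals $\epsilon_t$, so that $E(\sin^2 \theta_t) = \epsilon_t/d$, as claimed.

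There is no real obstacle here; the lemma is essentially a packaging of \eqref{eq:cos.theta}, Lemma~\ref{lem:es}, and the Frobenius-norm characterization of $\epsilon_t$ in \eqref{eq:epst}. The only point worth being slightly careful about is ensuring that the rearrangement $1 - \cos^2 \theta_t = s_t^T Q s_t / \|s_t\|^2$ is applied correctly (noting $\|v_t\|^2 = s_t^T \bar{U}^T \bar{U} s_t = \|s_t\|^2$ by orthonormality of $\bar{U}$), and that Lemma~\ref{lem:es} is stated for general $Q$ so no symmetry or positive-semidefiniteness assumption is needed.
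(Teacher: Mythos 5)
Your proposal is correct and follows essentially the same route as the paper: the paper applies Lemma~\ref{lem:es} with $Q = \bar{U}^T U_t U_t^T \bar{U}$ to get $E(\cos^2\theta_t) = 1 - \epsilon_t/d$ and then subtracts from $1$, whereas you fold the complement into $Q$ first; the two are trivially equivalent by linearity of the quadratic form and of the trace.
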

\begin{proof}
From \eqnok{eq:cos.theta}, Lemma~\ref{lem:es}, and \eqnok{eq:epst} we
have
\[
E(\cos^2 \theta_t) 
= \frac{1}{d} \trace (  \bar{U}^T U_t U_t^T \bar{U}  ) 
= \frac{1}{d} \|  U_t^T \bar{U} \|_F^2   
= \frac{1}{d} (d-\epsilon_t)  
= 1-\frac{\epsilon_t}{d},
\]
giving the result.
\end{proof}

\subsection{Expected Linear Decrease} \label{sec:explin}

We now put the pieces of theory derived in the previous subsections
together, to demonstrate the expected decrease in $\epsilon_t$ over a
single iteration. 


%
\begin{theorem} \label{th:erate}
Suppose that $| \Omega_t | \ge q$ for all $t$, where $q$ and $C_1$
satisfy the bounds \eqnok{eq:28.2}. Suppose that $\epsilon_t$
satisfies the bounds \eqnok{eq:epsbound.1} and
\eqnok{eq:epsbound.2}. Suppose that there is a quantity $\bar{\delta}
\in (0,.6)$ such that the bounds \eqnok{eq:muxt} are satisfied by $x_t
= v_t - U_t U_t^T v_t$ with probability at least
$1-\bar{\delta}$. Suppose that at each iteration, $s_t$ in 
\eqnok{eq:vt} is a random
vector whose components are i.i.d. $\mathcal{N}(0,1)$. Then
\beq \label{eq:erate} 
E[\epsilon_{t+1} \, | \, \epsilon_t] \le
\epsilon_t - (.32) (.6-\bar{\delta}) \frac{q}{nd} \epsilon_t + 55
\sqrt{\frac{n}{q}} \epsilon_t^{3/2}.  
\eeq
\end{theorem}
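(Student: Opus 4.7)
The plan is to combine three tools already developed: the deterministic per-step upper bound \eqnok{eq:Udecrease.8}, the high-probability pointwise lower bound on $\|r_t\|^2/\|p_t\|^2$ from Lemma~\ref{lem:rplow}, and the expectation computation of Lemma~\ref{lem:esint}. First I would handle the two possible behaviors of the algorithm in a unified way. When \eqnok{step:checkCTC} holds a step is taken and \eqnok{eq:Udecrease.8} gives $\epsilon_{t+1} \le \epsilon_t - \|r_t\|^2/\|p_t\|^2 + 55\sqrt{n/q}\,\epsilon_t^{3/2}$; when the check fails, the algorithm leaves the iterate unchanged so $\epsilon_{t+1} = \epsilon_t$. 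In both cases one has the uniform bound $\epsilon_{t+1} \le \epsilon_t + 55\sqrt{n/q}\,\epsilon_t^{3/2}$, with the additional negative contribution $-\|r_t\|^2/\|p_t\|^2$ present exactly on iterations where a step is taken.

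Next I would introduce the ``good'' event $G_t$ under which all three hypotheses that combine in Lemma~\ref{lem:rplow} hold: the eigenvalue condition \eqnok{step:checkCTC}, the residual bound \eqnok{eq:incresidlower}, and the coherence condition \eqnok{eq:muxt}. By Lemma~\ref{lem:rplow}, $P(G_t \mid \epsilon_t) \ge .6 - \bar{\delta}$, and on $G_t$ the algorithm takes a step and satisfies
\[
\frac{\|r_t\|^2}{\|p_t\|^2} \;\ge\; (.32)\frac{q}{n}\sin^2 \theta_t.
\]
Therefore $\epsilon_{t+1} \le \epsilon_t + 55\sqrt{n/q}\,\epsilon_t^{3/2} - (.32)(q/n)\sin^2 \theta_t \,\mathbf{1}_{G_t}$, and taking conditional expectation yields
\[
E[\epsilon_{t+1}\mid \epsilon_t] \;\le\; \epsilon_t + 55\sqrt{\frac{n}{q}}\,\epsilon_t^{3/2} - (.32)\frac{q}{n}\, E\!\left[\sin^2 \theta_t\,\mathbf{1}_{G_t}\mid \epsilon_t\right].
\]

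The main obstacle, and the step requiring the most care, is showing that $E[\sin^2 \theta_t \,\mathbf{1}_{G_t} \mid \epsilon_t] \ge (.6-\bar{\delta})\,\epsilon_t/d$. The idea is to exploit the structure of $G_t$: the eigenvalue check and the residual bound are statements about the randomness of $\Omega_t$ with failure probabilities $.1$ and $.3$ respectively (contributing $.6$ to $G_t$ via the union bound), whereas the coherence bound on $x_t$ is a statement about $s_t$ with failure probability at most $\bar{\delta}$. Since $\sin^2 \theta_t$ is a function of $s_t$ only, one conditions on $s_t$ and uses the tower property: the inner conditional probability $P(G_t\mid s_t)$ factors across the $\Omega_t$-dependent conditions and the $s_t$-dependent coherence indicator. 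Combining this factorization with Lemma~\ref{lem:esint} and $\sin^2\theta_t\le 1$ extracts the multiplicative factor $.6-\bar{\delta}$ times $E[\sin^2\theta_t]=\epsilon_t/d$.

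Substituting this bound into the conditional expectation above immediately gives \eqnok{eq:erate}. The remaining work is purely bookkeeping: verifying that the hypotheses on $q$, $\epsilon_t$ and $\mu(x_t)$ assumed in Theorem~\ref{th:erate} are precisely those that have been used in establishing \eqnok{eq:Udecrease.8} (from Subsection~\ref{sec:epst1}) and Lemma~\ref{lem:rplow}, so that all invocations are legitimate.
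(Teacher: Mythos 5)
Your overall skeleton is exactly the paper's: the paper also combines \eqnok{eq:Udecrease.8} with Lemma~\ref{lem:rplow} on the good event, uses the weaker bound $\epsilon_{t+1}\le\epsilon_t+55\sqrt{n/q}\,\epsilon_t^{3/2}$ otherwise (noting $\epsilon_{t+1}=\epsilon_t$ when \eqnok{step:checkCTC} fails and no step is taken), and then invokes Lemma~\ref{lem:esint}. Up to the final combination step you are reproducing the paper's argument, and your bookkeeping about which hypotheses feed \eqnok{eq:Udecrease.8} and Lemma~\ref{lem:rplow} is correct.

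The gap is in the step you yourself flag as the crux: the claim that conditioning on $s_t$, factorizing $P(G_t\mid s_t)$, and using $\sin^2\theta_t\le 1$ yields $E[\sin^2\theta_t\,\mathbf{1}_{G_t}\mid\epsilon_t]\ge(.6-\bar\delta)\,\epsilon_t/d$. The factorization itself is fine: given $s_t$, the eigenvalue check and \eqnok{eq:incresidlower} are events in $\Omega_t$ with conditional probability at least $.6$, while the coherence condition \eqnok{eq:muxt} is an indicator in $s_t$. But then you must lower-bound $E\bigl[\sin^2\theta_t\,\mathbf{1}_{\{\mu(x_t)\,\mathrm{good}\}}\bigr]$, and the bound $\sin^2\theta_t\le 1$ only gives $E[\sin^2\theta_t]-\bar\delta=\epsilon_t/d-\bar\delta$, hence $E[\sin^2\theta_t\,\mathbf{1}_{G_t}]\ge .6(\epsilon_t/d-\bar\delta)$. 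Under \eqnok{eq:epsbound.1} the quantity $\epsilon_t/d$ is far smaller than any fixed $\bar\delta>0$, so this bound is typically negative and in any case strictly weaker than the target $(.6-\bar\delta)\epsilon_t/d$: an additive loss of $\bar\delta$ cannot be converted into the multiplicative factor you claim. Using instead the sharper deterministic bound $\sin^2\theta_t\le\max_i\sin^2\phi_i\le\epsilon_t$ (immediate from the representation $\sin^2\theta_t=\sum_i\tilde{s}_i^2\sin^2\phi_i/\sum_i\tilde{s}_i^2$ used in Theorem~\ref{lem:fullconv}) would salvage an inequality of the same shape, but with constant $.6(1-d\bar\delta)$ rather than $.6-\bar\delta$, so it still does not deliver \eqnok{eq:erate} as stated. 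For comparison, the paper does not attempt this bookkeeping at all: it simply weights the good-case bound by the probability $.6-\bar\delta$ and applies Lemma~\ref{lem:esint}, which implicitly treats the good event as independent of (or nonnegatively correlated with) $\sin^2\theta_t$. Your instinct that this step requires an argument is sound, but the argument you give does not close it; to obtain the stated constant one needs such an independence/correlation assertion for the event \eqnok{eq:muxt} (the only $s_t$-dependent part of $G_t$), or a reformulation of the hypothesis on $\mu(x_t)$ conditional on $\theta_t$.
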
 
\begin{proof}
Under the given assumptions, we have from \eqnok{eq:Udecrease.8} and
Lemma~\ref{lem:rplow} that
\[
\epsilon_{t+1} \le \epsilon_t - .32 \frac{q}{n} \sin^2 \theta_t
+ 55 \sqrt{\frac{n}{q}} \epsilon_t^{3/2}, \quad \makebox{with probability at least $.6-\bar{\delta}$,}
\]
while
\[
\epsilon_{t+1} \le \epsilon_t
+ 55 \sqrt{\frac{n}{q}} \epsilon_t^{3/2}, \quad \makebox{otherwise.}
\]
(``Otherwise'' includes iterations on which no step is taken because
condition \eqref{step:checkCTC} fails to hold; we have
$\epsilon_{t+1}=\epsilon_t$ on these iterations.)  We obtain the proof
by combining these two results and using Lemma~\ref{lem:esint}.
\end{proof}

\begin{corollary} \label{co:estep}
Suppose that the conditions of Theorem~\ref{th:erate} hold and that in
addition, $\epsilon_t$ satisfies the following bound:
\beq  \label{eq:epsbound.3}
\epsilon_t \le (8 \times 10^{-6}) (.6-\bar{\delta})^2 \frac{q^3}{n^3 d^2}.
\eeq
We then have
\beq \label{eq:rate}
E[ \epsilon_{t+1} \, | \, \epsilon_t ] \le \left( 1- (.16) (.6-\bar{\delta})
\frac{q}{nd} \right) \epsilon_t.
\eeq
\end{corollary}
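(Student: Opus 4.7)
The plan is to apply Theorem~\ref{th:erate} directly and then absorb the higher-order $\epsilon_t^{3/2}$ term into half of the linear decrease term, using the strengthened bound \eqnok{eq:epsbound.3} to justify the absorption. Since \eqnok{eq:epsbound.3} is a tighter bound than \eqnok{eq:epsbound.1} when $q \le n$, the hypotheses of Theorem~\ref{th:erate} still hold, and we inherit
\[
E[\epsilon_{t+1} \, | \, \epsilon_t] \le \epsilon_t - (.32)(.6-\bar\delta) \frac{q}{nd} \epsilon_t + 55 \sqrt{\frac{n}{q}} \epsilon_t^{3/2}.
\]

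The main step is to split the coefficient $(.32)(.6-\bar\delta)$ into two equal pieces of size $(.16)(.6-\bar\delta)$ and to show that the second piece dominates the $\epsilon_t^{3/2}$ term, i.e., that
\[
55 \sqrt{\frac{n}{q}} \epsilon_t^{3/2} \le (.16)(.6-\bar\delta) \frac{q}{nd} \epsilon_t.
\]
Dividing through by $\epsilon_t$ and squaring, this reduces to the clean sufficient condition
\[
\epsilon_t \le \left(\frac{.16}{55}\right)^2 (.6-\bar\delta)^2 \frac{q^3}{n^3 d^2},
\]
and a short numerical check shows $(.16/55)^2 \approx 8.5 \times 10^{-6}$, so the constant $8 \times 10^{-6}$ in \eqnok{eq:epsbound.3} is (barely) sufficient.

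After this absorption, the remaining piece of the linear decrease yields exactly the claimed bound
\[
E[\epsilon_{t+1} \, | \, \epsilon_t] \le \epsilon_t - (.16)(.6-\bar\delta) \frac{q}{nd} \epsilon_t = \left(1 - (.16)(.6-\bar\delta)\frac{q}{nd}\right)\epsilon_t.
\]
There is no real obstacle here; the only thing to verify carefully is that \eqnok{eq:epsbound.3} is indeed at least as restrictive as \eqnok{eq:epsbound.1} and \eqnok{eq:epsbound.2}, so that Theorem~\ref{th:erate} can legitimately be invoked. For \eqnok{eq:epsbound.1} this follows since $q/n \le 1$, and \eqnok{eq:epsbound.2} has already been used throughout the hypotheses of Theorem~\ref{th:erate}. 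The corollary is thus essentially a one-line algebraic tightening of Theorem~\ref{th:erate}.
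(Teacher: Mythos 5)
Your proposal is correct and matches the paper's proof: both absorb the higher-order term by showing $55\sqrt{n/q}\,\epsilon_t^{1/2} \le (.16)(.6-\bar{\delta})\,q/(nd)$ under \eqnok{eq:epsbound.3} (the paper uses $\sqrt{8\times 10^{-6}} \le .0029$ and $55 \times .0029 \le .16$, which is the same numerical check as your $(.16/55)^2 \approx 8.5\times 10^{-6} \ge 8\times 10^{-6}$), then subtract it from the $(.32)(.6-\bar{\delta})$ coefficient in \eqnok{eq:erate}. Your extra remark about \eqnok{eq:epsbound.1} and \eqnok{eq:epsbound.2} is harmless but unnecessary, since the corollary already assumes the hypotheses of Theorem~\ref{th:erate}.
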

\begin{proof}
Given the bound \eqnok{eq:epsbound.3}, we have
\[
55 \sqrt\frac{n}{q} \epsilon_t^{1/2}
\le
55 \sqrt\frac{n}{q} (.0029) (.6-\bar{\delta}) \frac{q^{3/2}}{n^{3/2}d}
\le (.16)  (.6-\bar{\delta}) \frac{q}{nd}.
\]
We obtain the result by combining this bound with \eqnok{eq:erate}.
\end{proof}

This result indicates that the rate of decrease in error metric
$\epsilon_t$ is more rapid for higher values of the sampling ratio
$q/n$, and becomes slower as subspace dimension $d$ increases. The
expected decrease in \eqnok{eq:rate} is consistent with the factor
$(1-1/d)$ that we prove in the next section for the full-data case
($q=n$), modulo the factor $(.16) (.6-\bar{\delta})$. The appearance
of the latter factor is of course due to the uncertainty caused by
sampling.



\section{The  Full-Data Case: $q=n$} \label{sec:full}



When a random full vector $v_t \in \cS$ is available at each iteration
of GROUSE (that is, $\Omega_t \equiv \{1,2,\dotsc,n\}$), the algorithm
and its analysis simplify considerably, as we show in this
section. The expected decrease factor in $\epsilon_t$ at each
iteration is asymptotically $(1-1/d)$.

While the ISVD algorithm is preferred for this no-noise, full-data
case, we note that gradient algorithms are more flexible than
algorithms based explicitly on linear algebra
when additional constraints or regularizers are present, such as a
sparsity regularizer on the data fit or factor weights. It may be
possible to build on the full-data analysis presented in this section
to obtain a convergence guarantee for a Grassmannian gradient-descent
algorithm on such regularized problems.

Algorithm~\ref{grouse:full} shown below is the specialization of
Algorithm~\ref{grouse:partial} to the full-data case. Since
$(U_t)_{\Omega_t}^T (U_t)_{\Omega_t} = U_t^TU_t = I$ for all $i$, the
eigenvalue check \eqnok{step:checkCTC} is no longer needed.
\begin{algorithm}
\caption{GROUSE: Full Data} \label{grouse:full}
\begin{algorithmic}
\STATE{Given $U_0$ and $n \times d$ matrix with orthonormal columns, with $0<d<n$;}
\FOR{$t=0,1,2,\dotsc$} 
\STATE{Take $v_t \in \cS$;}
\STATE{Define $w_t := \arg \min_w \|U_t w - v_t \|_2^2 = U_t^Tv_t$;}
\STATE{Define $p_t := U_t w_t$; $r_t := v_t-p_t$; $\sigma_t:= \|r_t\| \, \|p_t\|$;}
\STATE{Choose $\eta_t>0$ and set}
\begin{equation} \label{eq:gupdate}
U_{t+1} := U_t + \left[ \left(\cos (\sigma_t \eta_t)-1 \right) \frac{p_t}{\|p_t\|} + \sin(\sigma_t \eta_t) \frac{r_t}{\|r_t\|} \right] 
\frac{w_t^T}{\|w_t\|}.
\end{equation}
\ENDFOR
\end{algorithmic}
\end{algorithm}
The definitions of certain quantities above are simplified in the
full-data case, as we demonstrate here (with the introduction of
notation $A_t := U_t^T \bar{U}$):
\begin{subequations} \label{eq:defs}
\begin{align}
v_t &= \bar{U} s_t, \\
\label{eq:defA}
A_t &:= U_t^T \bar{U}, \\
w_t &= U_t^Tv_t = U_t^T \bar{U} s_t = A_t s_t, \\
p_t &= U_t w_t, \\
r_t &= v_t - U_t w_t = (I- U_t U_t^T) \bar{U} s_t.
\end{align}
\end{subequations}
We continue to use $\theta_t$ to denote the angle between $\cS$ and
$R(U_t)$ that is exposed by the update vector $v_t$. We have
\beq \label{eq:rtheta}
\cos \theta_t = \frac{\| w_t \|}{\| v_t \|} = \frac{\|p_t \|}{\|v_t\|}, \quad
\sin \theta_t = \frac{\sqrt{\|v_t\|^2-\|w_t\|^2}}{\|v_t\|} = \frac{\|r_t\|}{\|v_t\|}.
\eeq
Thus
\beq \label{eq:sigma}
\sigma_t = \|r_t \| \| p_t \| = \| v_t\|^2 \sin \theta_t \cos \theta_t
 = \frac12 \| v_t \|^2 \sin 2 \theta_t.
\eeq
By using $A_t$ defined in \eqnok{eq:defA} we have from \eqnok{eq:epst}
that
\beq \label{eq:UUTF.A}
\epsilon_t = d - \| A_t \|_F^2 = d - \trace (A_t A_t^T).
\eeq

Our first result provides an exact expression for the relationship
between $\epsilon_{t+1}$ and $\epsilon_t$.
It also motivates an ``optimal'' choice for $\eta_t$ consistent with
the one discussed in Subsection~\ref{sec:context}.  The proof of this
result is quite technical, involving various trigonometric identities
and elementary linear algebra manipulations, so we relegate it to
Appendix~\ref{app:lemA}.

\begin{lemma} \label{lem:A}
We have for all $t$ that
\beq
\label{eq:Aincr}
\epsilon_t - \epsilon_{t+1}
 = \frac{\sin (\sigma_t \eta_t)\sin(2 \theta_t - \sigma_t \eta_t)}{\sin^2 \theta_t}
  \left( 1- \frac{w_t^T A_t A_t^T w_t}{w_t^Tw_t} \right).
\eeq
Moreover, the right-hand side is nonnegative for $\sigma_t \eta_t \in
(0,2\theta_t)$, and zero if  $v_t \in R(U_t) = \cS_t$
or $v_t \perp \cS_t$ (that is, $\theta_t=0$ or $\theta_t = \pi/2$).
\end{lemma}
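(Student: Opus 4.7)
The plan is to compute $A_{t+1} = U_{t+1}^T\bar{U}$ directly from the rank-one update \eqnok{eq:gupdate}, then expand $\|A_{t+1}\|_F^2 = \trace(A_{t+1}A_{t+1}^T)$ and simplify until the stated trigonometric form emerges. Writing $a:=\cos(\sigma_t\eta_t)$, $b:=\sin(\sigma_t\eta_t)$, and $g := (a-1)\,p_t^T\bar{U}/\|p_t\| + b\,r_t^T\bar{U}/\|r_t\|$, the update gives $A_{t+1} = A_t + (w_t/\|w_t\|)\,g$, and since $\|w_t/\|w_t\|\|=1$ a short trace calculation yields
\[
\epsilon_t - \epsilon_{t+1} = 2\,\frac{w_t^T A_t g^T}{\|w_t\|} + \|g\|^2.
\]

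Next I would compute the ingredients of $g$ and of $w_t^TA_tg^T$ in closed form. Using $p_t = U_tw_t$ gives $p_t^T\bar{U} = w_t^TA_t$; using $r_t = (I - U_tU_t^T)\bar{U}s_t$ together with $A_ts_t = w_t$ gives $\bar{U}^Tr_t = s_t - A_t^Tw_t$. Setting $k := w_t^T A_t A_t^T w_t$ and $\|p_t\|=\|w_t\|$, these two identities reduce every inner product appearing above to a polynomial in $k$, $\|w_t\|^2$, $\|r_t\|^2$, and $\|s_t\|^2$. Collecting and cancelling (the coefficient of $k/\|w_t\|^2$ collapses to $-b^2$ via $2(a-1)+(a-1)^2=a^2-1$), I expect to arrive at
\[
\epsilon_t-\epsilon_{t+1} = (\|w_t\|^2 - k)\left[\frac{b^2}{\|w_t\|^2}+\frac{2ab}{\|w_t\|\|r_t\|}-\frac{b^2}{\|r_t\|^2}\right].
\]

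The main obstacle is then recognising the bracketed expression as the desired trigonometric form. Substituting $\|w_t\| = \|s_t\|\cos\theta_t$, $\|r_t\|=\|s_t\|\sin\theta_t$ from \eqnok{eq:rtheta} pulls a factor $1/\|s_t\|^2$ out and leaves $a\sin 2\theta_t - b\cos 2\theta_t$ inside, which is exactly $\sin(2\theta_t-\sigma_t\eta_t)$. The remaining factor $\|w_t\|^2/\|s_t\|^2 = \cos^2\theta_t$ cancels one of the two $\cos^2\theta_t$'s in the denominator, delivering \eqnok{eq:Aincr}. The bookkeeping here, not any conceptual step, is the trickiest part: the three fractions must be combined over $\sin^2\theta_t\cos^2\theta_t$ and the numerator must be recognised as an angle-subtraction identity.

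For the sign and boundary claims, nonnegativity of $1 - w_t^TA_tA_t^Tw_t/w_t^Tw_t$ follows because $A_tA_t^T = U_t^T(\bar{U}\bar{U}^T)U_t \preceq I$, so $k \le \|w_t\|^2$; nonnegativity of $\sin(\sigma_t\eta_t)\sin(2\theta_t-\sigma_t\eta_t)$ is immediate from both arguments lying in $(0,\pi)$ when $\sigma_t\eta_t\in(0,2\theta_t)$ and $\theta_t\in(0,\pi/2)$. Finally, $v_t\in R(U_t)$ forces $r_t=0$, hence $\sigma_t=0$, hence $b=0$, so the right-hand side vanishes; and $v_t\perp R(U_t)$ forces $w_t=0$, so $p_t=0$ and the update is the trivial $U_{t+1}=U_t$, which I would handle as a degenerate limiting case where the $\|w_t\|^2-k$ factor vanishes.
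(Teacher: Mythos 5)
Your proposal is correct and takes essentially the same route as the paper's Appendix~B proof: compute $A_{t+1}$ from the rank-one update, expand $\|A_{t+1}\|_F^2-\|A_t\|_F^2$, reduce everything to the scalars $w_t^TA_tA_t^Tw_t$, $\|w_t\|$, $\|r_t\|$, $\|s_t\|$, and finish with the angle-subtraction identity after substituting $\|w_t\|=\|s_t\|\cos\theta_t$, $\|r_t\|=\|s_t\|\sin\theta_t$, treating the degenerate cases $\theta_t\in\{0,\pi/2\}$ at the same informal level as the paper. The only difference is bookkeeping: you use the rank-one perturbation identity $2\,\hat{w}^TA_tg^T+\|g\|^2$ for the Frobenius-norm change instead of the paper's full expansion of $H^TH$ and $\mbox{\rm trace}(A_+A_+^T)$, which shortens the algebra but is not a different argument.
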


The expression \eqnok{eq:Aincr} immediately suggests the following
choice for $\eta_t$:
\beq \label{eq:eta.full}
\eta_t := \frac{\theta_t}{\sigma_t} = \frac{2 \theta_t}{\| v_t \|^2 \sin 2 \theta_t},
\eeq
for which $\sin \sigma_t \eta_t = \| r_t \| / \| v_t \|$. (In the
regime $\|r_t \| \ll \|p_t\|$, this choice is similar to
\eqnok{eq:eta.choice.nofudge} made for the general case, because of
\eqnok{eq:ptr}.)
Given \eqnok{eq:eta.full}, \eqnok{eq:Aincr}
simplifies to
\beq \label{eq:Aincr.eta}
\epsilon_t - \epsilon_{t+1}  =  \left( 1- \frac{w_t^T A_t A_t^T w_t}{w_t^Tw_t} \right)
\eeq

We now proceed with an expected convergence analysis for the choice of
$\eta_t$ in \eqnok{eq:eta.full}, for which the convergence bound is
\eqnok{eq:Aincr.eta}.
\begin{theorem} \label{lem:fullconv}
Suppose that in Algorithm~\ref{grouse:full}, $v_t = \bar{U} s_t$,
where the components of $s_t$ are chosen i.i.d. from
$\mathcal{N}(0,1)$, and that $\eta_t$ chosen as in \eqnok{eq:eta.full}
for each $t$. Suppose too that $\epsilon_t \le \bar{\epsilon}$ for
some $\bar{\epsilon} \in (0,1/3)$. Then 
\beq \label{eq:explin.full} 
E\left[ \epsilon_{t+1} \, | \, \epsilon_t \right] \le \left( 1- \frac{1-3 \epsilon_t}{d} \right) \epsilon_t.  \eeq
\end{theorem}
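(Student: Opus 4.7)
The plan is to take the exact per-step identity from Lemma~\ref{lem:A} and compute (a lower bound on) its expectation directly. Starting from \eqnok{eq:Aincr.eta}, I would substitute $w_t = A_t s_t$ from \eqnok{eq:defs} to rewrite the decrement as a ratio of quadratic forms in $s_t$:
\[
\epsilon_t - \epsilon_{t+1} \;=\; 1 - \frac{s_t^T M_t^2 s_t}{s_t^T M_t s_t} \;=\; \frac{s_t^T M_t(I - M_t) s_t}{s_t^T M_t s_t}, \qquad M_t := A_t^T A_t.
\]
Because the singular values of $A_t = U_t^T \bar{U}$ are $\cos \phi_i(\bar U, U_t)$, the eigenvalues $\lambda_i$ of $M_t$ all lie in $[0,1]$, and \eqnok{eq:epst} gives $\sum_i(1-\lambda_i) = \epsilon_t$. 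Set $\mu_i := 1-\lambda_i \in [0,1]$.

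Next I would diagonalize $M_t = Q_t \Lambda_t Q_t^T$ and change variables $\tilde s_t := Q_t^T s_t$, which is again standard Gaussian by orthogonal invariance and has $\|\tilde s_t\| = \|s_t\|$. The decrement becomes
\[
\epsilon_t - \epsilon_{t+1} \;=\; \frac{\sum_i \lambda_i \mu_i \tilde s_{t,i}^2}{\sum_j \lambda_j \tilde s_{t,j}^2}.
\]
The main obstacle is that the expectation of a ratio of correlated quadratic forms is not available in closed form. I would sidestep this by using $\lambda_j \le 1$ to bound the denominator pointwise by $\|\tilde s_t\|^2$, yielding
\[
\epsilon_t - \epsilon_{t+1} \;\ge\; \frac{1}{\|\tilde s_t\|^2}\sum_i \lambda_i \mu_i \tilde s_{t,i}^2 .
\]
By linearity and Lemma~\ref{lem:esi} applied to the i.i.d.\ components $\tilde s_{t,i}$, taking conditional expectation given $U_t$ gives
\[
E[\epsilon_t - \epsilon_{t+1} \mid \epsilon_t] \;\ge\; \frac{1}{d} \sum_i \lambda_i \mu_i \;=\; \frac{1}{d}\Bigl(\epsilon_t - \sum_i \mu_i^2\Bigr).
\]

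Finally, each individual $\mu_i$ is bounded by $\sum_j \mu_j = \epsilon_t$, so $\sum_i \mu_i^2 \le \epsilon_t \cdot \sum_i \mu_i = \epsilon_t^2$. Hence $\sum_i \lambda_i \mu_i \ge \epsilon_t(1-\epsilon_t) \ge \epsilon_t(1 - 3\epsilon_t)$ (the generous factor of $3$ absorbs any looser intermediate constants, consistent with the stated hypothesis $\bar\epsilon < 1/3$), and rearranging produces \eqnok{eq:explin.full}. The only delicate step is the one noted above: the pointwise estimate $\sum_j \lambda_j \tilde s_{t,j}^2 \le \|\tilde s_t\|^2$ decouples the ratio and makes Lemma~\ref{lem:esi} directly applicable, after which everything else is algebra on the eigenvalues $\lambda_i$ of $M_t$.
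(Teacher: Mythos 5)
Your proposal is correct, and it shares the skeleton of the paper's own proof: both start from the exact identity \eqnok{eq:Aincr.eta}, substitute $w_t = A_t s_t$, diagonalize $A_t^T A_t$ so that the decrement becomes a ratio of quadratic forms in an i.i.d.\ Gaussian vector $\tilde{s}_t$, and finish with $E[\tilde{s}_{t,i}^2/\|\tilde{s}_t\|^2] = 1/d$ from Lemma~\ref{lem:esi}. Where you genuinely diverge is in how the ratio is handled. The paper keeps the ratio intact, introduces $\psi_t = \sum_i \tilde{s}_{t,i}^2 \sin^2\phi_i / \|\tilde{s}_t\|^2$, and pushes through a chain of inequalities (using $\max_i \sin^2\phi_i \le \epsilon_t$ and $1/(1-u) \le 1+2u$) to reach the pointwise bound $\epsilon_t - \epsilon_{t+1} \ge (1-3\epsilon_t)\psi_t$ before taking expectations; you instead decouple the ratio at once via the pointwise estimate $\sum_j \lambda_j \tilde{s}_{t,j}^2 \le \|\tilde{s}_t\|^2$, which is legitimate because the numerator $\sum_i \lambda_i \mu_i \tilde{s}_{t,i}^2$ is nonnegative. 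Your route is shorter and in fact sharper: it gives $E[\epsilon_t - \epsilon_{t+1} \,|\, \epsilon_t] \ge \epsilon_t(1-\epsilon_t)/d$, which implies \eqnok{eq:explin.full} since $1-\epsilon_t \ge 1-3\epsilon_t$; the hypothesis $\bar{\epsilon} < 1/3$ is then needed only to make the stated rate nontrivial. The only detail worth stating explicitly is that the denominator $s_t^T A_t^T A_t s_t$ is positive almost surely (each $\lambda_i = \cos^2\phi_i > 0$ because $\epsilon_t < 1$, and $s_t \ne 0$ a.s.), so the ratio, and hence the identity you start from, is well defined.
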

\begin{proof}
From  Lemma~\ref{uTunearorthogonal}, using the $d
\times d$ orthogonal matrices $Y_t$ and $\bar{Y}$ defined in
\eqnok{eq:ss}, we have that
\[
A_t = U_t^T \bar{U} = Y_t \Gamma_t \bar{Y}^T,
\]
so that 
\[
A_t^T A_t = \bar{Y} \Gamma_t^2 \bar{Y}^T, \quad
A_t^T A_t A_t^T A_t = \bar{Y} \Gamma_t^4 \bar{Y}^T.
\]
Thus for the critical term in \eqnok{eq:Aincr.eta}, using $w_t = U_t^T
\bar{U} s_t = A_t s_t$, dropping the subscript $t$ freely, and
recalling the definition \eqnok{eq:defSG} of $\Gamma_t$, we can write
\beq \label{eq:elf.1}
\frac{w^T A A^T w}{w^Tw} = \frac{s^T A^T A A^T A s}{s^T A^T A s} =
\frac{\tilde{s}^T \Gamma^4 \tilde{s}}{\tilde{s}^T \Gamma^2 \tilde{s}} =
  \frac{\sum_{i=1}^d \tilde{s}_i^2 \cos^4 \phi_i}{\sum_{i=1}^d
    \tilde{s}_i^2 \cos^2 \phi_i},
\eeq
where $\tilde{s}=\bar{Y}^Ts$ and $\phi_i = \phi_i(\bar{U},U_t)$. Since
the components of $s$ are chosen i.i.d. from $\mathcal{N}(0,1)$, the
components of $\tilde{s}$ are also i.i.d. from $\mathcal{N}(0,1)$. 

We make two useful observations before proceeding. First, from the
definition of $\epsilon_t$ in \eqnok{eq:epst}, we have
\beq \label{eq:psibd}
0 \le \frac{\sum_{i=1}^d \tilde{s}_i^2 \sin^2 \phi_i}{\sum_{i=1}^d
  \tilde{s}_i^2} \le \max_{i=1,2,\dotsc,d} \sin^2 \phi_i \le \epsilon_t.
\eeq
Second, for any scalar $u$ with $0 \le u \le \bar\epsilon$, for any
$\bar\epsilon \in [0,1/2)$, we have
\[
\frac{1}{1-u} =  1 + \frac{u}{1-u} \le 1 + 2u.
\]

Returning to \eqnok{eq:elf.1}, dropping the indices on the
summation terms for clarity, introducing the notation
\begin{equation} \label{eq:psi}
\psi_t := \frac{\sum \tilde{s}_i^2 \sin^2 \phi_i}{\sum \tilde{s}_i^2},
\end{equation}
and noting from \eqnok{eq:psibd} that $\psi_t \in [0,\epsilon_t]$, 
we have
\begin{align*}
 \frac{\sum \tilde{s}_i^2 \cos^4 \phi_i}{\sum
    \tilde{s}_i^2 \cos^2 \phi_i} &=
\frac{\sum \tilde{s}_i^2 [1- 2 \sin^2 \phi_i + \sin^4 \phi_i]}{\sum \tilde{s}_i^2 (1-\sin^2 \phi_i)} \\
& \le 
\frac{\sum \tilde{s}_i^2 - (2-\epsilon_t) \sum \tilde{s}_i^2 \sin^2 \phi_i}{\sum \tilde{s}_i^2 -\sum \tilde{s}_i^2 \sin^2 \phi_i}   && 
\mbox{from \eqref{eq:psibd}} \\
&=
\frac{1-  (2-\epsilon_t) 
\psi_t}{1- \psi_t} && \mbox{from \eqref{eq:psi}} \\
& \le
\left[ 1-  (2-\epsilon_t)  \psi_t  \right]
\left[
1 + \frac{1}{1-\epsilon_t} \psi_t
\right] \\
& = 1- \left[ 2 - \epsilon_t - \frac{1}{1-\epsilon_t} \right] \psi_t
- \frac{2-\epsilon_t}{1-\epsilon_t} \psi_t^2 \\
& \le  1- \left[ 2 - \epsilon_t - \frac{1}{1-\epsilon_t} \right] \psi_t \\
& \le 1 - \left[ 2 - \epsilon_t - (1+2\epsilon_t) \right] \psi_t \\
&= 1-(1-3 \epsilon_t) \psi_t.
\end{align*}
We have too from Lemma~\ref{lem:es} that
\[
E(\psi_t) = \frac{1}{d} \sum_{i=1}^d \sin^2 \phi_i = \frac{\epsilon_t}{d},
\]
where the expectation is taken over  $s_t$. 
Assembling these results, we have that
\[
E \left( 1- \frac{w_t^T A_t A_t^T w_t}{w_t^Tw_t} \right)
= E \left( 1-  \frac{\sum \tilde{s}_i^2 \cos^4 \phi_i}{\sum
    \tilde{s}_i^2 \cos^2 \phi_i} \right) 
\ge (1-3 \epsilon_t)  E(\psi_t) =
(1-3 \epsilon_t)  \frac{\epsilon_t}{d}.
\]
The result follows by taking the conditional expectation of both sides
in \eqnok{eq:Aincr.eta}, and using the bound just derived.
\end{proof}

This result shows that the sequence $\{ \epsilon_t \}$ converges
linearly in expectation with an asymptotic rate of $(1-1/d)$. This
rate allows for some interesting observations. First, if $d=1$, it
suggests convergence in a single step --- as indeed we would expect,
as the full vector $v_t \in \cS$ would in this case reveal the 
solution in one step. More generally, we have from the bound
\[
(1-1/d)^d \le \frac{1}{e}
\]
that a decrease factor of about $e$ can be expected over each set of
$d$ consecutive iterations. By comparison, the same amount of
information --- $d$ full vectors randomly drawn from $\cS$ --- is
sufficient to reveal the subspace completely (with probability
$1$). We could obtain an orthonormal basis by assembling these $d$
vectors into a $n \times d$ matrix and performing a singular value
decomposition (SVD). (Of course, extension of an SVD-based approach to
the case of partial data is not straightforward.)


\section{Computational Results} \label{sec:computations}

We present some computational results on random problems to illustrate
the convergence behavior of GROUSE in both the partial-data and
full-data cases. 

For the full-data case, we implemented Algorithm~\ref{grouse:full} in
Matlab on a problem for which the $n \times d$ subspace $\cS$ was
chosen randomly, as the range space of an $n \times d$ matrix whose
elements are i.i.d. in $\mathcal{N}(0,1)$. 
We used a random starting matrix $U_0$ whose columns were orthonormalized. 
Figure~\ref{fig:full} shows results for $n=10000$ and $d=4$,
$d=6$. The straight line in this semilog plot ($t$ vs $\log
\epsilon_t$) represents the predicted asymptotic convergence rate
$(1-1/d)$, while the irregular line represents the actual error. There
is a close correspondence between these results and the predictions
of Theorem~\ref{lem:fullconv}. On early iterations, when $\epsilon_t$
is large, convergence is slower than the asymptotic rate, as predicted
by the presence of the factor $1-3 \epsilon_t$ in the expression
\eqnok{eq:explin.full}. On later iterations, this factor approaches
$1$, and the asymptotic rate emerges --- the curve of actual errors
becomes parallel to the straight line.

\begin{figure} 
\begin{tabular}{cc}
\includegraphics[width=2.2in]{./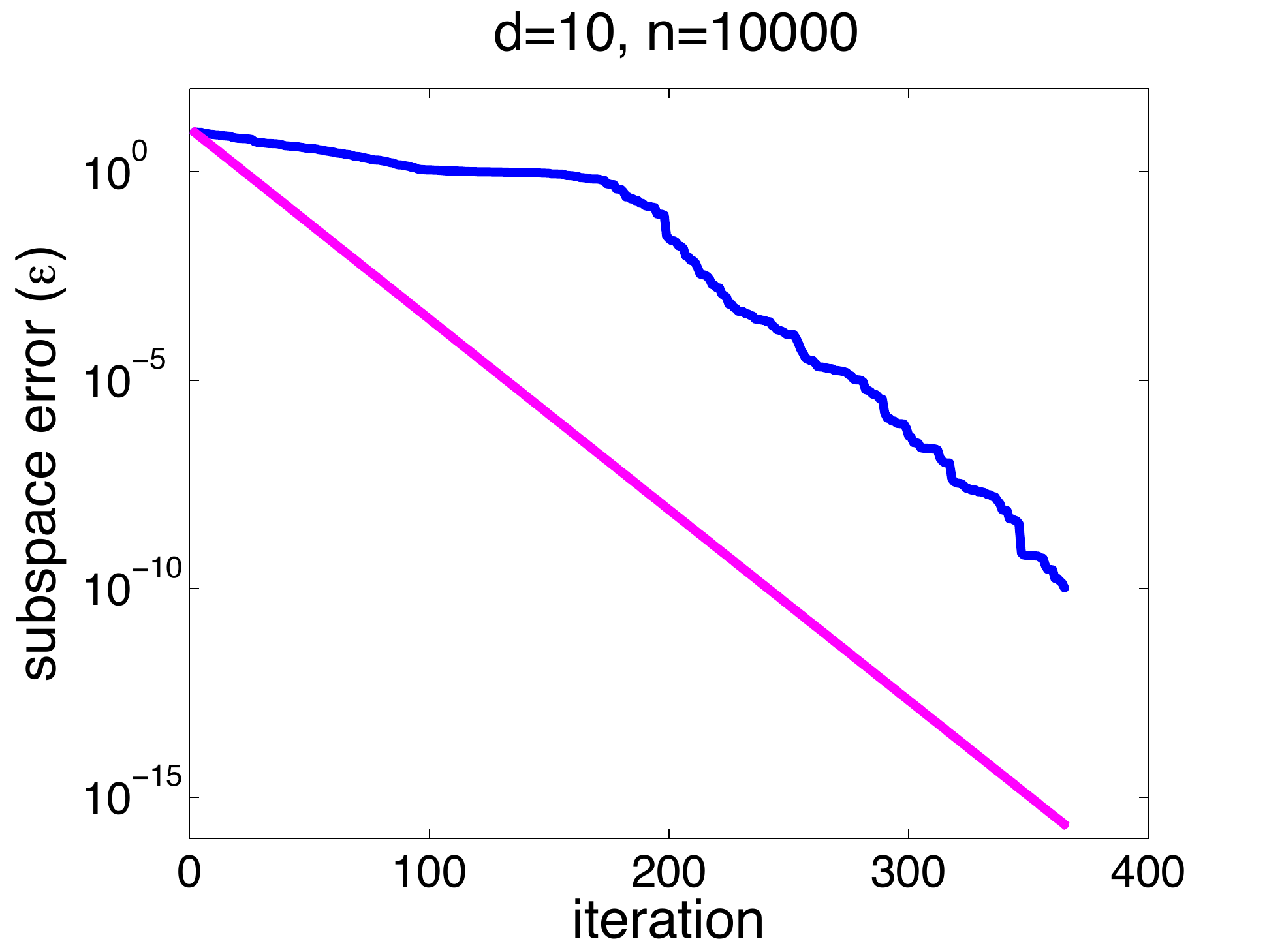} & 
\includegraphics[width=2.2in]{./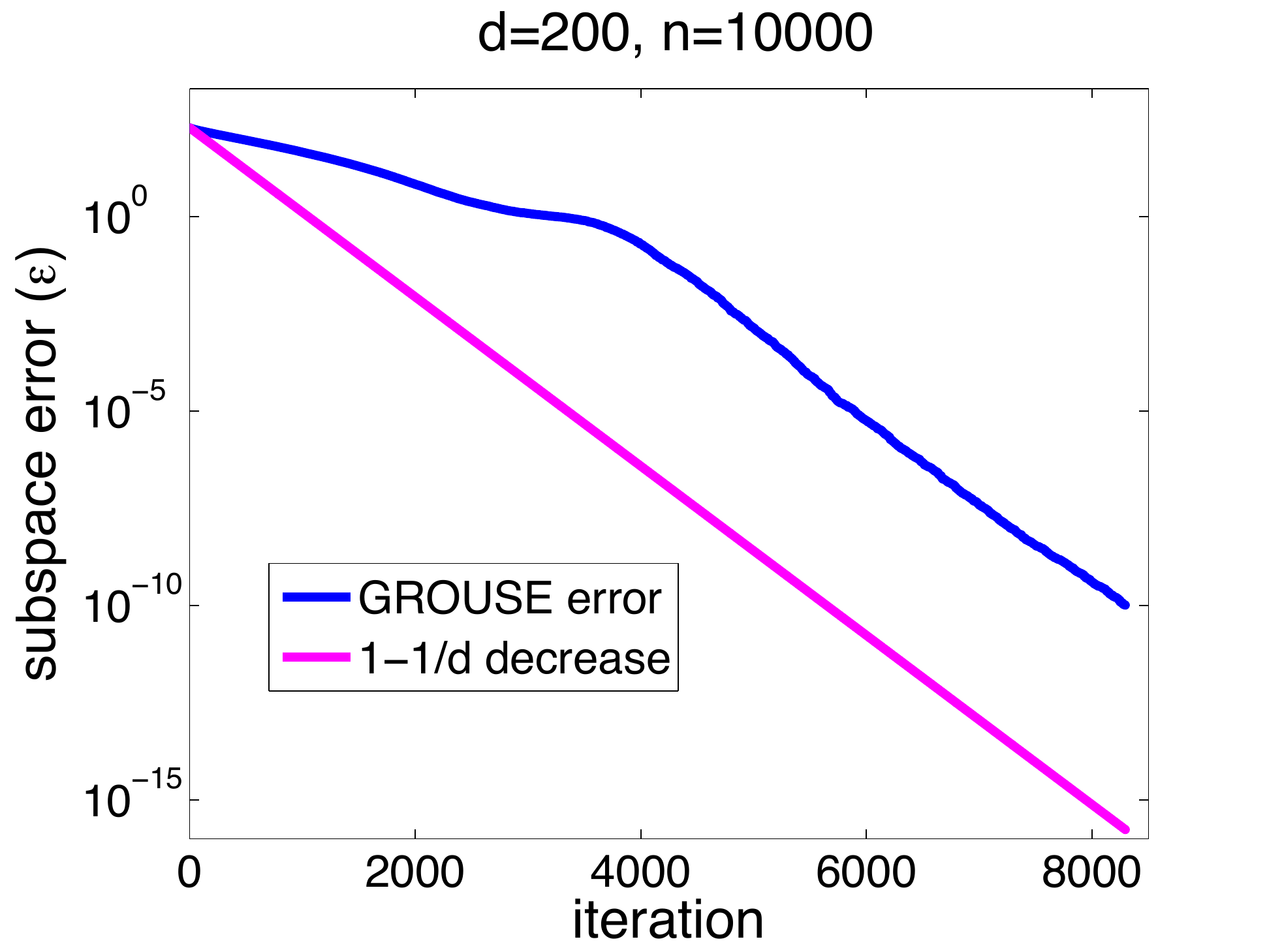} 
\end{tabular}
\caption{Illustration of convergence on Full-Data Case for $n=10000$ with $d=10$ (left)
and $d=200$ (right).\label{fig:full}}
\end{figure}

For the general case, we chose various values of the dimensions $n$
and $d$ and the sampling cardinality $q$, and ran a number of trials
that were constructed in the following manner.  The target space $\cS$
was defined to be the range space of an $n \times d$ matrix $T$ whose
entries were chosen i.i.d. from $\mathcal{N}(0,1)$, and $\bar{U}$ was
obtained by orthonormalizing the columns of $T$. To obtain a starting
matrix $U_0$, we added to $T$ an $n \times d$ matrix whose elements
were chosen i.i.d. from $\mathcal{N}(0,1/4)$, and orthonormalized the
resulting matrix. We then generated vectors from $\bar{U}$ using
Gaussian vectors $s_t$, and updated $U_t$ using the GROUSE
algorithm. In the computational experiments we did not check whether
condition \eqnok{step:checkCTC} was satisfied, and instead took every
step. When $|\Omega|$ was sufficiently large, in alignment with the
theory, this bound was almost always satisfied. As a specific example,
for $n=10,000$ and $|\Omega|=d \log d \log n$, out of 1000 trials, the
bounds of \eqnok{step:checkCTC} were satisfied 98.5\% of the time for
$d=10$ and 100\% of the time for $d=100$.

After running each trial for a large enough number of iterations $N$
to establish an asymptotic convergence rate, we computed the value $X$
to satisfy the following expression:
\beq \label{eq:X} \epsilon_N = \epsilon_0 \left(
1-X\frac{q}{nd} \right)^N, 
\eeq
By comparing with \eqnok{eq:rate}, we see that $X$ absorbs the factor
$(.16) (.6-\bar{\delta})$ that is independent of $n$, $d$, and $q$,
and that arises because of the errors introduced by sampling.  

The values of $X$ for various values of $n$, $d$, and $q$ are shown in
Figure~\ref{fig:phase}. For all $q$ larger than some modest multiple
of $d$, $X$ is not too far from $1$, showing that the actual
convergence rate is not too much different from $(1-q/(nd))$ and that
indeed the analysis is somewhat conservative. 


\begin{figure} 
\begin{tabular}{cc}
\includegraphics[width=2.3in]{./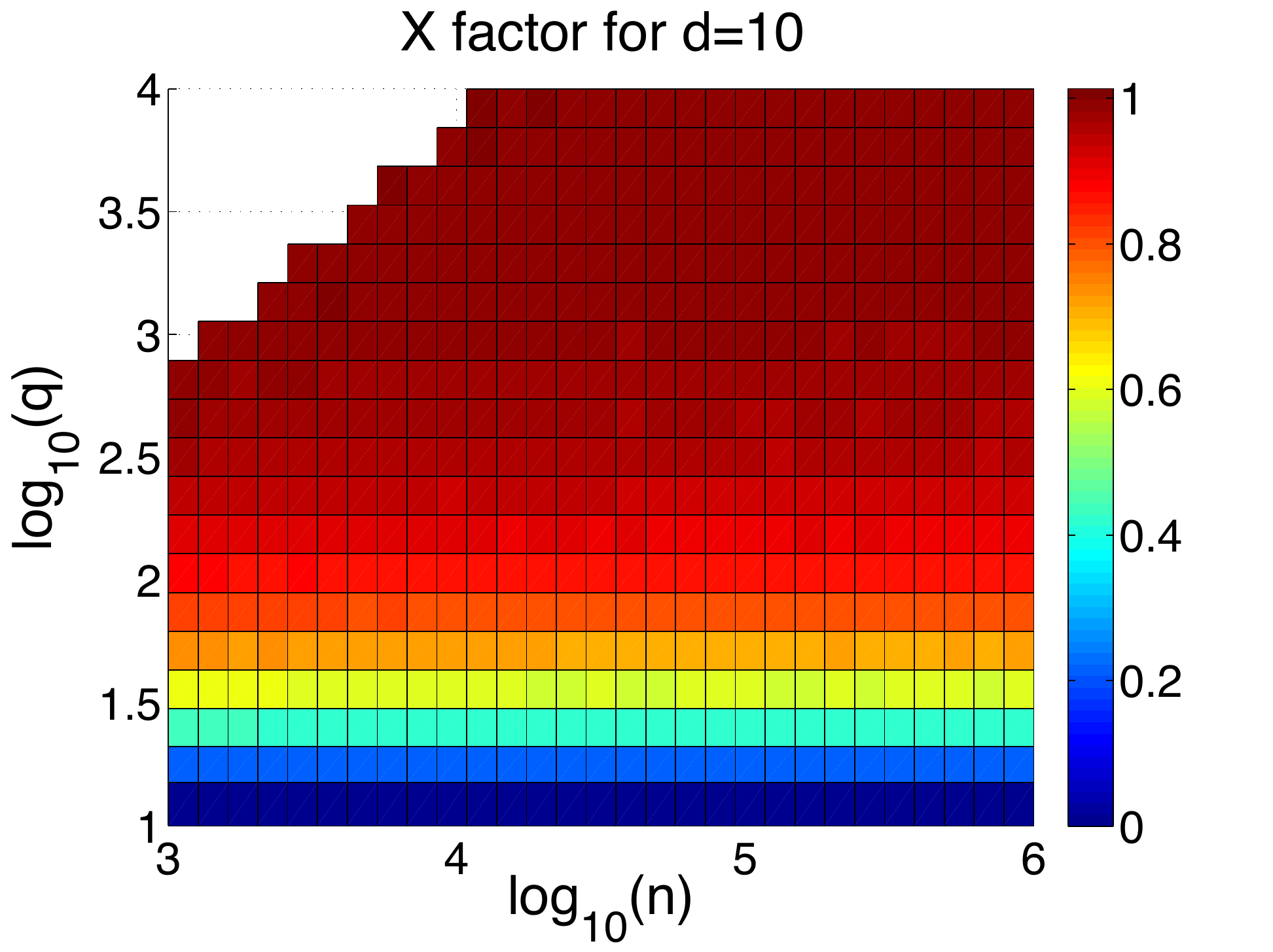} & 
\includegraphics[width=2.3in]{./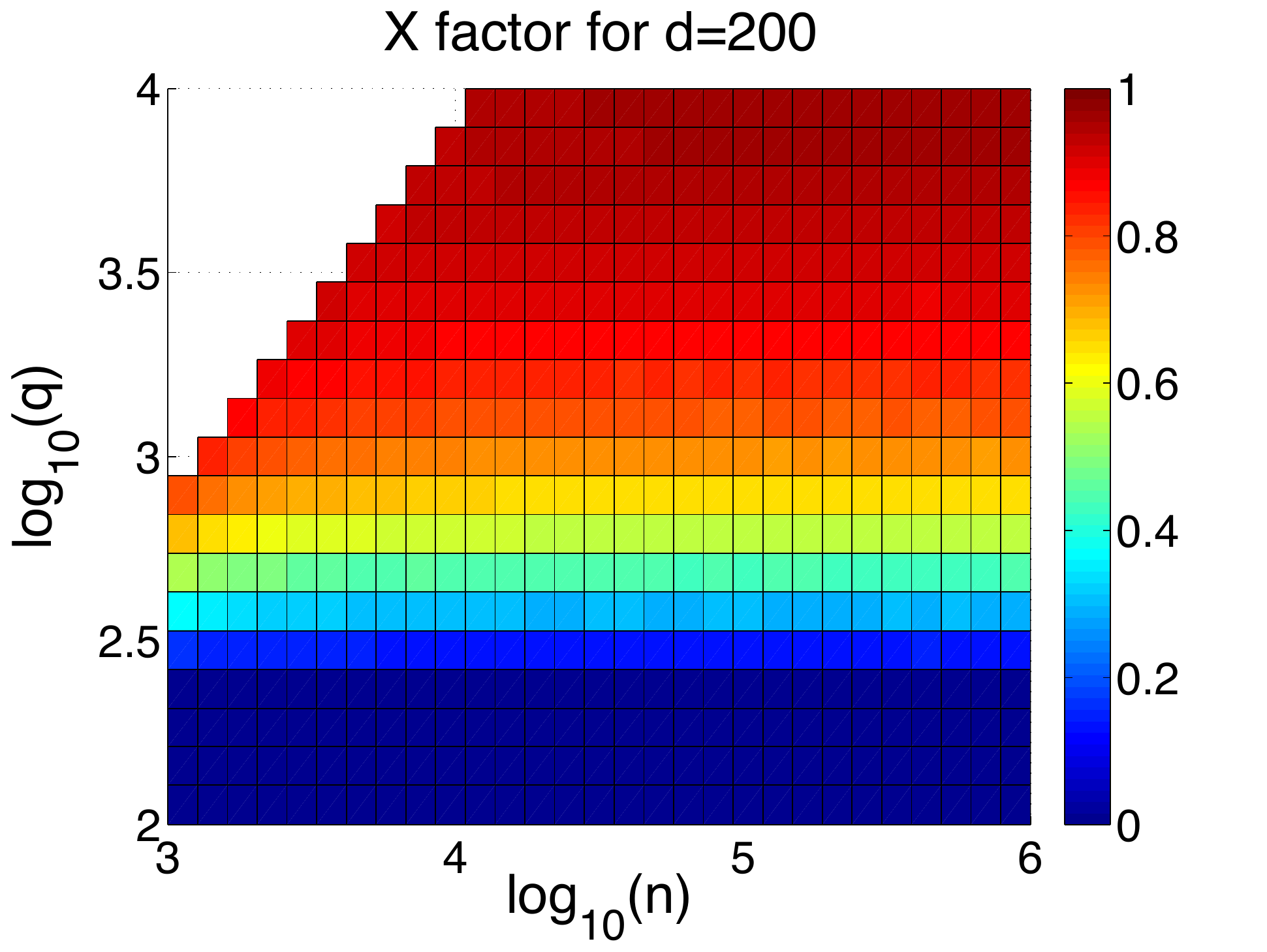}  \\
(a) & (b) \\ \\
\includegraphics[width=2.3in]{./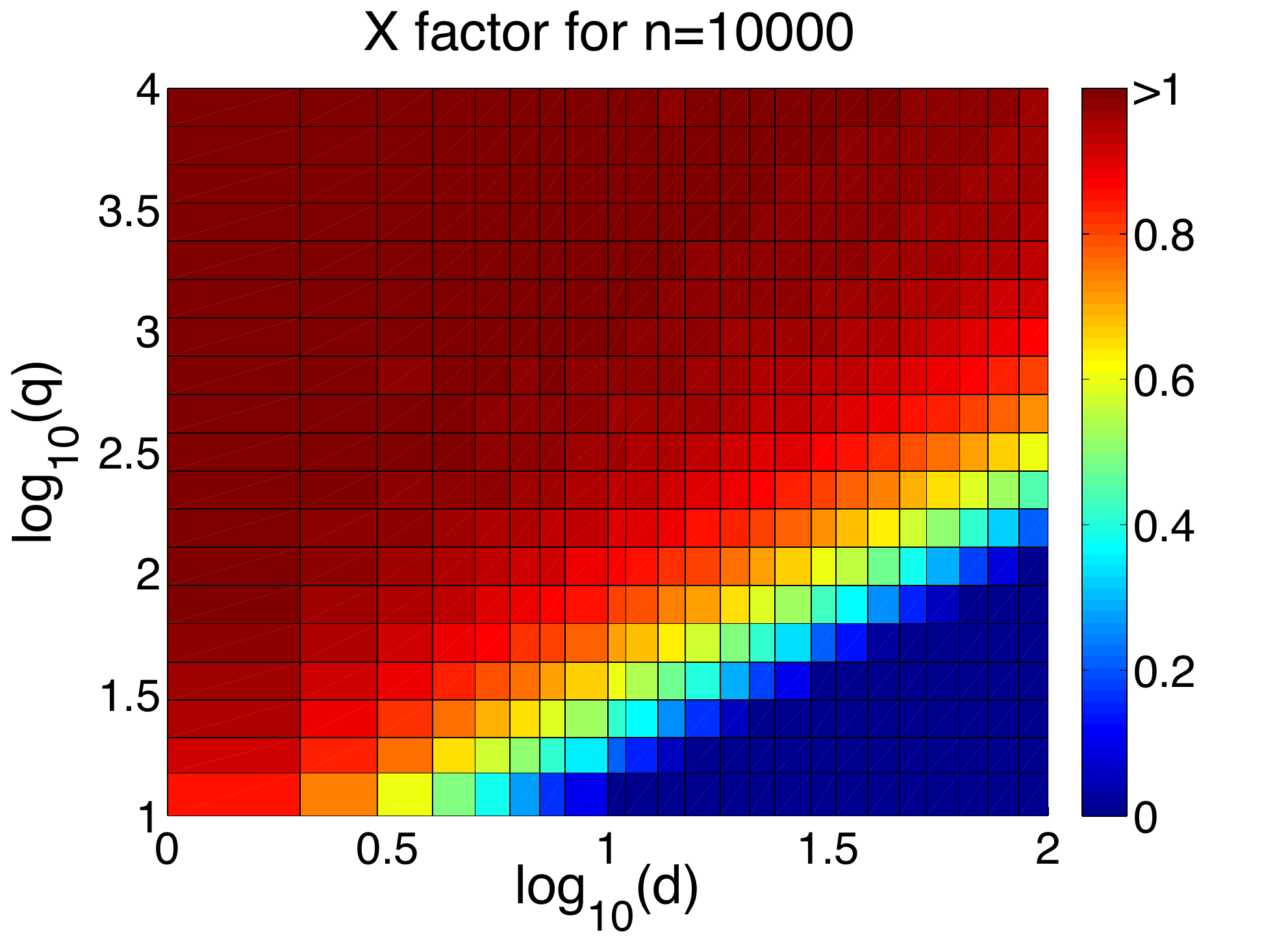} & 
\includegraphics[width=2.3in]{./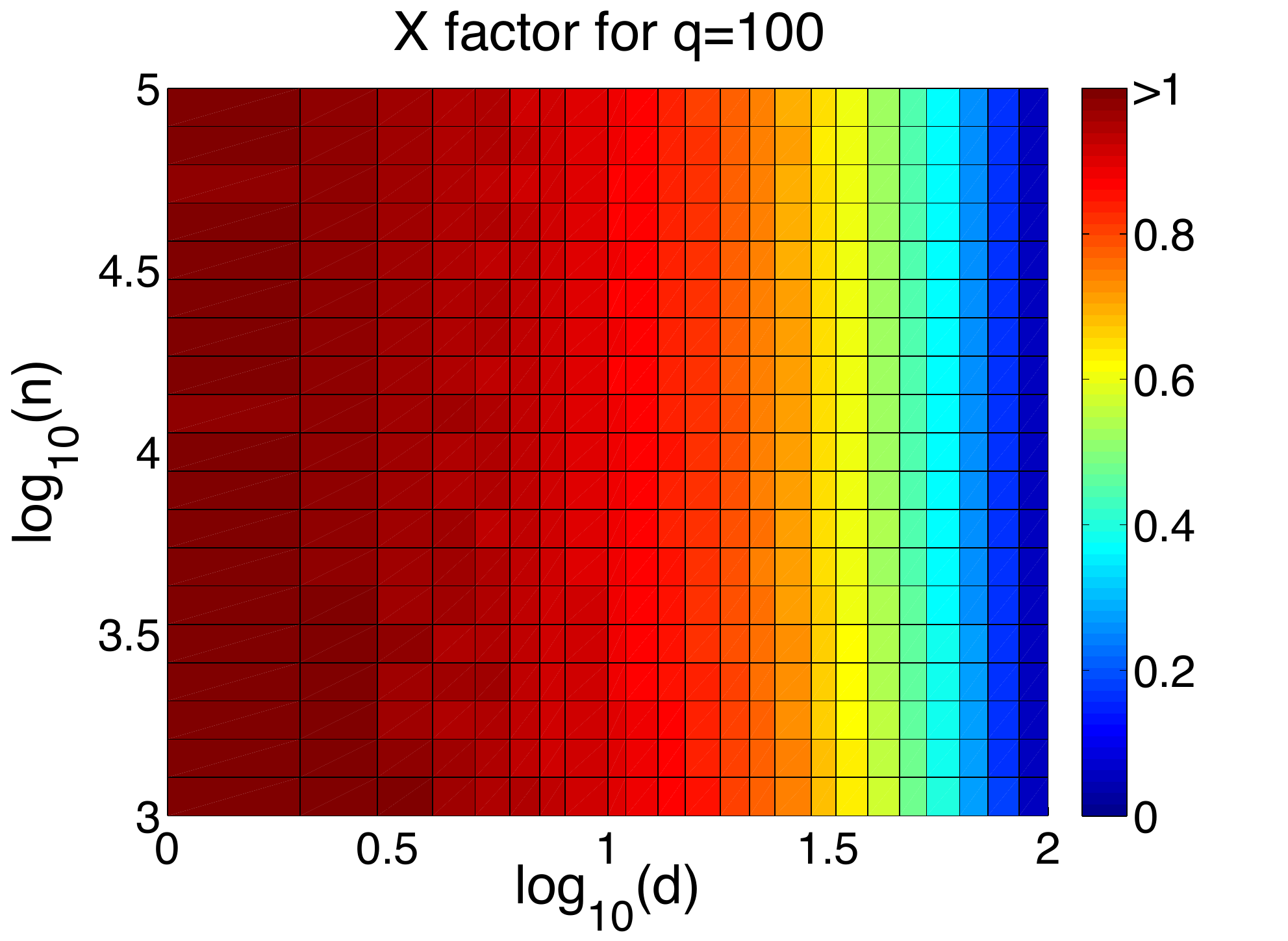}  \\
(c) & (d) 
\end{tabular}
\caption{Observed convergence factor $X$ for various values of $d$,
  $n$, and $q$.
$X$ is computed using \eqnok{eq:X}, using $N=500$ iterations of
  Algorithm~\ref{grouse:partial} per trial. (a) $d=10$, showing $X$
  averaged over ten trials for each $n$ and $q$. (b) $d=200$, with $X$
  plotted over one trial for each $n$ and $q$. (c) $n=10000$, with $X$
  plotted for $d$ and $q$, averaged over ten trials.  (d) $q=100$, with
  $X$ plotted for $n$ and $d$, averaged over ten trials. In (a) and (b), the darkest red takes value $1$; 
in (c) and (d), the darkest red is from $1$ to $1.4$. The plots all
  clearly show a phase transition in $X$, occurring when $q$ is some
  modest multiple of $d$, but with $X$ otherwise independent of $q$,
  $n$, and $d$, as suggested by the analysis. \label{fig:phase}}
\end{figure}

\section{Conclusion}

We have analyzed the GROUSE algorithm to find that near a solution,
GROUSE decreases subspace error at a linear rate, in expectation. Our
estimate of the linear rate depends on the problem dimensions and the
number of entries observed per vector, and matches well with our
computational observations.

We believe that there are deep connections between our analysis and
recent important work on randomized linear algebra (see, for example,
\cite{halko2011finding}). Often this work seeks to find a low-rank
approximation to a matrix or an approximation to its leading
eigenspace, and randomized column or row sampling is used to make
algorithms more efficient on large matrices. Our work randomly samples
matrix entries instead. A deeper understanding of how these approaches
are related is an interesting area for future investigation. GROUSE's
connections to the ISVD algorithm, which are explored further
in~\cite{grouseisvd}, may be a step in this direction.

Several other questions concerning GROUSE's convergence behavior
warrant further investigation.  We have observed empirically that
convergence to a solution occurs from any random starting point, given
a sufficiently large number of observed elements. This motivates us to
pursue a better mathematical understanding of the global convergence
properties.  Moreover, we observe convergence even for coherent
subspaces, and would like to understand why. Also of interest is the
behavior of GROUSE in the case of noisy observations. A diminishing
step size is needed here, leading to slower convergence. We would like
a better analytical understanding of this case under different noise
models of interest.

\section*{Acknowledgments} 

We are grateful to two referees for helpful and constructive comments
on the original version of this manuscript.

\appendix

\section{Proof of Theorem~\ref{th:sampledsingvals}} \label{app:sampledsingvals}

We start with a key result on matrix concentration.

\begin{theorem}[Noncommutative Bernstein Inequality~\cite{Gross09b, RechtImprovedMC09}]
\label{bernstein}
Let $X_1, \dots, X_m$ be independent zero-mean square $d \times d$
random matrices. Suppose 
\[
\rho_k^2 := \max \, \{\|\mathbb{E}[X_k X_k^T]\|_2,
\| \mathbb{E}[X_k^T X_k] \|_2 \}
\]
and $\|X_k\|_2 \leq M$ almost surely for all $k$. Then for any $\tau >
0$,
\[
\mathbb{P} \left[ \left\| \sum_{k=1}^m X_k \right\|_2 > \tau \right]
\leq 2d \exp \left( \frac{-\tau^2 / 2}{\sum_{k=1}^m \rho_k^2 + M\tau /
  3} \right).
\]
\end{theorem}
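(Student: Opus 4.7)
The plan is to use the matrix Laplace transform approach (Ahlswede--Winter, refined by Oliveira and Tropp). First, I would reduce to the Hermitian case via the Hermitian dilation: set
\[
Y_k := \begin{pmatrix} 0 & X_k \\ X_k^T & 0 \end{pmatrix},
\]
a $2d \times 2d$ Hermitian matrix satisfying $\|Y_k\|_2 = \|X_k\|_2 \le M$, $E[Y_k]=0$, and $\|E[Y_k^2]\|_2 = \rho_k^2$. Since $\|\sum_k X_k\|_2 = \lambda_{\max}(\sum_k Y_k)$, it suffices to bound the upper tail of $\lambda_{\max}(S)$ with $S := \sum_k Y_k$; the factor $2d$ (rather than $d$) in the final bound comes from the dilation having dimension $2d$.

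Next, I would apply the matrix Chernoff (Markov) bound: for any $\lambda > 0$,
\[
P(\lambda_{\max}(S) > \tau) \le e^{-\lambda \tau}\, E[\mathrm{tr}\, e^{\lambda S}].
\]
To decouple the expectation over the independent summands inside the exponential of a sum of non-commuting matrices, I would invoke Lieb's concavity theorem (in the iterative Tropp form) to obtain
\[
E[\mathrm{tr}\, e^{\lambda S}] \le \mathrm{tr}\, \exp\!\Bigl(\sum_k \log E[e^{\lambda Y_k}]\Bigr),
\]
followed by a scalar-to-matrix MGF transfer bound
\[
\log E[e^{\lambda Y_k}] \preceq \frac{\lambda^2}{2(1-\lambda M/3)}\, E[Y_k^2], \quad 0 < \lambda < 3/M,
\]
which is obtained by applying the scalar estimate $e^{\lambda y} - 1 - \lambda y \le \tfrac{\lambda^2 y^2/2}{1-\lambda M/3}$ eigenvalue-wise (valid on $|y|\le M$), then using $E[Y_k]=0$ and operator monotonicity of the logarithm.

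Combining these pieces and bounding $\mathrm{tr}\,\exp(\cdot) \le 2d \cdot e^{\lambda_{\max}(\cdot)}$ yields
\[
P(\lambda_{\max}(S) > \tau) \le 2d \cdot \exp\!\Bigl(-\lambda\tau + \frac{\lambda^2 \sigma^2}{2(1-\lambda M/3)}\Bigr),
\]
with $\sigma^2 := \sum_k \rho_k^2 \ge \|\sum_k E[Y_k^2]\|_2$. Optimizing over $\lambda$ by taking $\lambda = \tau/(\sigma^2 + M\tau/3)$ produces the stated Bernstein tail bound. The main obstacle is the step that pushes the expectation through the matrix exponential of a sum of non-commuting random matrices; naive multiplicativity of the exponential fails, and Lieb's concavity theorem (or Golden--Thompson for the case of two summands) is the essential nontrivial ingredient. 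Once that tool is in hand, the remaining work is careful bookkeeping in the semidefinite order and an elementary scalar Bernstein moment estimate.
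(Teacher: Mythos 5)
Your proof is correct, but there is nothing in the paper to compare it against: the paper states this theorem as an imported result, citing Gross and Recht, and gives no proof of its own. The route you take --- Hermitian dilation to reduce to the symmetric case (which is where the prefactor $2d$ comes from), the matrix Laplace-transform/Markov step, subadditivity of the matrix cumulant generating function via Lieb's concavity theorem, the Bernstein moment bound $\log E[e^{\lambda Y_k}] \preceq \frac{\lambda^2/2}{1-\lambda M/3}\, E[Y_k^2]$ for $0<\lambda<3/M$, and the choice $\lambda = \tau/(\sigma^2 + M\tau/3)$, which makes the exponent exactly $-\frac{\tau^2/2}{\sigma^2 + M\tau/3}$ --- is the standard Tropp-style derivation and closes without gaps. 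It is worth noting that the references the paper cites instead follow the Ahlswede--Winter argument, peeling off one summand at a time with the Golden--Thompson inequality; that argument naturally produces the variance parameter $\sum_k \rho_k^2$ appearing in the statement, whereas the Lieb route yields the generally smaller parameter $\bigl\| \sum_k E[Y_k^2] \bigr\|_2$, which you then correctly relax to $\sum_k \rho_k^2$ by the triangle inequality. Either way the stated tail bound follows; your version is, if anything, slightly stronger before that final relaxation.
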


We proceed with the proof of Theorem~\ref{th:sampledsingvals}.
\begin{proof} 
We start by defining the notation
\[
u_k := U_{\Omega(k) \cdot}^T \in \R^d,
\] 
that is, $u_k$ is the transpose of the row of the row of $U$ that
corresponds to the $k$th element of $\Omega$. We thus define
\[
X_k := u_k u_k^T - \frac{1}{n}  I_d, 
\]
where $I_d$ is the $d \times d$ identity matrix. Because of orthonormality of
the columns of $U$, this random variable has zero mean.

To apply Theorem~\ref{bernstein}, we must compute the values of
$\rho_k$ and $M$ that correspond to this definition of $X_k$.  Since
$\Omega(k)$ is chosen uniformly with replacement, the $X_k$ are
distributed identically for all $k$, and $\rho_k$ is independent of
$k$ (and can thus be denoted by $\rho$).

Using the fact that 
\beq \label{eq:ABpsd}
\|A-B\|_2 \leq \max\{\|A\|_2,\|B\|_2\} \;\; \mbox{for
positive semidefinite matrices $A$ and $B$},
\eeq
and recalling that $\|u_k\|^2_2 = \|U_{\Omega(k) \cdot}\|^2_2 \leq
d\mu(U)/n$, we have
\[
\left\| u_k u_k^T - \frac{1}{n} I_d \right\|_2 \leq \max\left\{
\frac{d\mu(U)}{n}, \frac{1}{n} \right\}.
\]
Thus we can define $M := d\mu(U) / n$.
For $\rho$, we note by symmetry of $X_k$ that 
\begin{align}
\rho^2 =  \left\| \mathbb{E} \left[ X_k^2 \right] \right\|_2 
& =  \left\| \mathbb{E} \left[ u_k u_k^T u_k u_k^T - \frac{2}{n} u_k u_k^T + \frac{1}{n^2} I_d \right] \right\|_2 \nonumber \\
& =  \left\| \mathbb{E}\left[u_k (u_k^T u_k) u_k^T\right] -  \frac{1}{n^2} I_d  \right\|_2, \label{midtermreduce.alt}
\end{align}
where the last step follows from linearity of expectation,
and $E(u_k u_k^T) = (1/n) I_d$.

For the next step, we define $S$ to be the $n \times n$ diagonal
matrix with diagonal elements $\|U_{i \cdot}\|_2^2$,
$i=1,2,\dotsc,n$. We thus have
\[
\|E[u_k (u_k^T u_k) u_k^T]\|_2 = 
\left\| \frac{1}{n} U^T S U \right\| \le
\frac{1}{n} \| U \|_2^2 \|S\|_2 = \frac{1}{n} \frac{d \mu(U)}{n} =
\frac{d \mu(U)}{n^2}.
\]
Using \eqnok{eq:ABpsd}, we have from \eqnok{midtermreduce.alt} that
\[
\rho^2  \le 
\max \left( \left\| \mathbb{E}\left[u_k (u_k^T u_k) u_k^T\right] \right\|,
\frac{1}{n^2} \right) 
\le \max \left( \frac{d \mu(U)}{n^2},
\frac{1}{n^2} \right)  =
\frac{d \mu(U)}{n^2},
\]
since $d \mu(U) \ge d \ge 1$.

We now apply Theorem~\ref{bernstein}. First, we restrict $\tau$ to be
such that $M\tau \leq |\Omega| \rho^2$ to simplify the denominator of
the exponent. We obtain
\[
2d \exp \left( \frac{-\tau^2 / 2}{|\Omega| \rho^2 + M\tau/3}\right) \leq 2d\exp \left( \frac{-\tau^2 / 2}{\frac{4}{3} |\Omega| \frac{d\mu(U)}{ n^2}}\right),
\]
and thus
\[
\mathbb{P} \left[ \left\| \sum_{k \in \Omega} \left(u_k u_k^T -
  \frac{1}{n} I_d \right) \right\| > \tau \right] \leq 2d \exp \left(
\frac{-3 n^2 \tau^2}{8 |\Omega|d\mu(U)}\right).
\]
Now take $\tau = \gamma |\Omega|/n$ with $\gamma$ defined in the
statement of the lemma. Since $\gamma<1$ by assumption, $M\tau \leq
|\Omega| \rho^2$ holds and we have
\beq \label{eq:bni}
\mathbb{P} \left[ \left\| \sum_{k \in \Omega} \left(u_k u_k^T -
  \frac{1}{n} I_d \right) \right\|_2 \leq \frac{|\Omega|}{n} \gamma
  \right] \geq 1 - \delta.
\eeq
We have, by symmetry of $\sum_{k \in \Omega} u_k u_k^T$ and the fact
that 
\[
\lambda_i \left( \sum_{k \in \Omega} u_k u_k^T 
- \frac{|\Omega|}{n} I\right) = \lambda_i \left( \sum_{k \in \Omega} u_k
u_k^T \right) - \frac{|\Omega|}{n},
\]
that
\begin{align*}
\left\| \sum_{k \in \Omega} \left(u_k u_k^T -
  \frac{1}{n} I_d \right) \right\|_2  & =
\left\| \left( \sum_{k \in \Omega} u_k u_k^T \right) -
  \frac{|\Omega|}{n} I_d  \right\|_2 \\
& = 
\max_{i=1,2,\dotsc,n}  \left| \lambda_i \left( \sum_{k \in \Omega} u_k u_k^T  \right)  - \frac{|\Omega|}{n} \right|,
\end{align*}
  From \eqnok{eq:bni}, we have with probability $1-\delta$ that
\[
\lambda_i \left( \sum_{k \in \Omega} u_k
u_k^T \right)  \in \left[ (1-\gamma) \frac{|\Omega|}{n}, 
(1+\gamma) \frac{|\Omega|}{n} \right] \quad \mbox{for all $i=1,2,\dotsc,n$},
\]
completing the proof.
\end{proof}

\section{Proof of Lemma~\ref{lem:A}} \label{app:lemA}

 We drop the subscript ``$t$'' throughout the proof and use $A_+$ in
 place of $A_{t+1}$. From \eqref{eq:gupdate}, and using the
 definitions \eqref{eq:defs}, we have
\begin{align*}
A_+^T &= \bar{U}^TU_+ \\
&= \bar{U}^TU + \left\{
(\cos (\sigma \eta) - 1) \frac{\bar{U}^T UU^T \bar{U} s}{\|w\|} +
\sin (\sigma \eta) \frac{(I-\bar{U}^T UU^T \bar{U})s}{\|r\|} 
\right\} \frac{s^T\bar{U}^TU}{\|w\|} \\
&= \left\{
I + (\cos (\sigma \eta) - 1)  \frac{A^TAss^T}{\|w\|^2} +
\sin (\sigma \eta) \frac{(I-A^TA)ss^T}{\|r\| \|w\|}
\right\} A^T = HA^T,
\end{align*}
where the matrix $H$ is defined in an obvious way.
Thus
\[
\|A_+\|_F^2 = \trace (A_+A_+^T) = \trace (AH^THA^T).
\]
Focusing initially on $H^TH$ we obtain
\begin{align*}
H^TH &= I + (\cos (\sigma \eta)-1)^2 \frac{ss^T A^TAA^TA ss^T}{\|w\|^4} \\
& \quad +
(\cos (\sigma \eta)-1) \frac{ss^TA^TA + A^TAss^T}{\|w\|^2} \\
& \quad + \sin(\sigma \eta) \frac{2ss^T - ss^TA^TA - A^TAss^T}{\|r\|\|w\|} \\
& \quad +
2 \sin(\sigma \eta) (\cos(\sigma \eta)-1) \frac{ss^T A^TA ss^T - ss^T A^TAA^TAss^T}{\|r\| \|w\|^3} \\
& \quad + \sin^2 (\sigma \eta) \frac{s(s^Ts - 2s^TA^TAs+s^TA^TAA^TAs)s^T}{\|r\|^2 \|w\|^2}.
\end{align*}
It follows immediately that
\begin{align*}
A_+ A_+^T &= AA^T + (\cos (\sigma \eta)-1)^2 \frac{Ass^T A^TAA^TA ss^TA^T}{\|w\|^4} \\
& \quad +
(\cos (\sigma \eta)-1) \frac{Ass^TA^TAA^T + AA^TAss^TA^T}{\|w\|^2} \\
& \quad + \sin(\sigma \eta) \frac{2Ass^TA^T - Ass^TA^TAA^T - AA^TAss^TA^T}{\|r\|\|w\|} \\
& \quad +
2 \sin(\sigma \eta) (\cos(\sigma \eta)-1) \frac{Ass^T A^TA ss^TA^T - Ass^T A^TAA^TAss^TA^T}{\|r\| \|w\|^3} \\
& \quad + \sin^2 (\sigma \eta) \frac{As(s^Ts - 2s^TA^TAs+s^TA^TAA^TAs)s^TA^T}{\|r\|^2 \|w\|^2}.
\end{align*}
We now use repeatedly the fact that $\trace \, ab^T= a^Tb$ to deduce that
\begin{align*}
\trace (A_+A_+^T) &= \trace (AA^T) + (\cos (\sigma \eta)-1)^2 \frac{(s^TA^TAs)s^T A^TAA^TA s}{\|w\|^4} \\
& \quad +
(\cos (\sigma \eta)-1) \frac{2s^TA^TAA^TAs}{\|w\|^2} \\
& \quad + \sin(\sigma \eta) \frac{2s^TA^TAs - 2 s^TA^TAA^TAs}{\|r\|\|w\|} \\
& \quad +
2 \sin(\sigma \eta) (\cos(\sigma \eta)-1) \frac{(s^T A^TA s)^2 - (s^T A^TAA^TAs)(s^TA^TAs)}{\|r\| \|w\|^3} \\
& \quad + \sin^2 (\sigma \eta) \frac{\|s\|^2 s^TA^TAs -  2(s^TA^TAs)^2 + (s^TA^TAA^TAs)(s^TA^TAs)}{\|r\|^2 \|w\|^2}.
\end{align*}
Now using $w=As$ (and hence $s^TA^TAs=\|w\|^2$), we have
\begin{align*}
\trace (A_+A_+^T) &= \trace (AA^T) + (\cos (\sigma \eta)-1)^2 \frac{s^T A^TAA^TA s}{\|w\|^2} \\
& \quad +
(\cos (\sigma \eta)-1) \frac{2s^TA^TAA^TAs}{\|w\|^2} \\
& \quad + 2 \sin(\sigma \eta) \frac{\|w\|^2 - s^TA^TAA^TAs}{\|r\|\|w\|} \\
& \quad +
2 \sin(\sigma \eta) (\cos(\sigma \eta)-1) \frac{\|w\|^2 - s^T A^TAA^TAs}{\|r\| \|w\|} \\
& \quad + \sin^2 (\sigma \eta) \frac{\|s\|^2-  2\|w\|^2 + (s^TA^TAA^TAs)}{\|r\|^2},
\end{align*}
For the second and third terms on the right-hand side, we use the
identity
\[
(\cos (\sigma \eta)-1)^2 + 2(\cos (\sigma \eta)-1) = \cos^2 (\sigma
\eta)-1 = -\sin^2 (\eta \sigma),
\]
allowing us to combine these terms with the final $\sin^2(\sigma
\eta)$ term. Using also the identity $\|r\|^2 = \|s\|^2-\|w\|^2$, we
obtain for the combination of these three terms that 
\begin{align*}
& \sin^2 (\sigma \eta) \left[ 1 - \frac{\|w\|^2}{\|r\|^2} + s^TA^TAA^TAs
\left( \frac{1}{\|r\|^2} - \frac{1}{\|w\|^2} \right) \right] \\
 & \qquad = \sin^2 (\sigma \eta) \left( 1 - \frac{\|w\|^2}{\|r\|^2}\right)
\left(1- \frac{s^TA^TAA^TAs}{\|w\|^2} \right).
\end{align*}
We can also combine the third and fourth terms in the right-hand side
above to yield a combined quantity
\[
2 \sin(\sigma \eta) \cos(\sigma \eta) \frac{\|w\|}{\|r\|} 
\left(1- \frac{s^TA^TAA^TAs}{\|w\|^2} \right).
\]
By substituting these two compressed terms into the expression above,
we obtain
\begin{align*}
& \trace (A_+A_+^T)  = \trace (AA^T)  \\
& \quad + \sin(\sigma \eta)
\left(1- \frac{s^TA^TAA^TAs}{\|w\|^2} \right) \left[
\left(1-\frac{\|w\|^2}{\|r\|^2} \right) \sin (\sigma \eta) + 2 \cos
(\sigma \eta) \frac{\|w\|}{\|r\|} \right].
\end{align*}
We now use the relations \eqnok{eq:rtheta} to deduce that
\[
\frac{\|w\|}{\|r\|} = \frac{\cos \theta}{\sin \theta}, \quad
1-\frac{\|w\|^2}{\|r\|^2} = -\frac{\cos (2 \theta)}{\sin^2 \theta},
\]
and thus the increment $\trace (A_+A_+^T)  - \trace (AA^T)$ becomes
\begin{align*}
& \sin (\sigma \eta) 
\left(1- \frac{s^TA^TAA^TAs}{\|w\|^2} \right) 
\left[ -\frac{\cos (2 \theta)}{\sin^2 \theta} \sin (\sigma \eta) + 2 \cos (\sigma \eta) \frac{\cos \theta}{\sin \theta} \right] \\
& \quad =  \frac{\sin (\sigma \eta)  \sin(2 \theta - \sigma \eta)}{\sin^2 \theta}
\left(1- \frac{s^TA^TAA^TAs}{\|w\|^2} \right).
\end{align*}
The result \eqref{eq:Aincr} follows by substituting $w=As$ and
\eqref{eq:sigma}.

Nonnegativity of the right-hand side follows from $\theta_t \ge 0$,
$2\theta_t - \sigma_t \eta_t \ge 0$, and $\| A_t^T w_t \| \le \| 
\bar{U}^T U_t \| \| w_t \| \le \| w_t \|$.

To prove that the right-hand side of \eqref{eq:Aincr} is zero when
$v_t \in \cS$ or $v_t \perp \cS$, we take the former case first.
Here, there exists $\hat{s}_t \in \R^d$ such that
\[
v_t = \bar{U} s_t = U_t \hat{s}_t.
\]
Thus
\[
w_t = A_t s_t = U_t^T \bar{U} s_t = U_t^T U_t \hat{s}_t = \hat{s}_t, 
\]
so that $\| v_t \| = \|w_t \|$ and thus $\theta_t=0$, from
\eqnok{eq:rtheta}. This implies that the right-hand side of
\eqref{eq:Aincr} is zero. When $v_t \perp \cS_t$, we have $w_t =
U_t^Tv_t=0$ and so $\theta_t = \pi/2$ and $\sigma_t=0$, implying again
that the right-hand side of \eqref{eq:Aincr} is zero.

\bibliographystyle{siam} 
\bibliography{grouserefs}

\begin{thebibliography}{10}

\bibitem{Ardekani99}
{\sc B.~A. Ardekani, J.~Kershaw, K.~Kashikura, and I.~Kanno}, {\em Activation
  detection in functional {MRI} using subspace modeling and maximum likelihood
  estimation}, IEEE Transactions on Medical Imaging, 18 (1999).

\bibitem{Bal12}
{\sc L.~Balzano}, {\em Handling Missing Data in High-Dimensional Subspace
  Modeling}, PhD thesis, University of Wisconsin-Madison, May 2012.

\bibitem{BalRN10b}
{\sc L.~Balzano, R.~Nowak, and B.~Recht}, {\em Online identification and
  tracking of subspaces from highly incomplete information}, in Proceedings of
  Allerton, September 2010.
\newblock Available at \url{http://arxiv.org/abs/1006.4046}.

\bibitem{BalRN10a}
{\sc L.~Balzano, B.~Recht, and R.~Nowak}, {\em High-dimensional matched
  subspace detection when data are missing}, in Proceedings of the
  International Symposium on Information Theory, June 2010.

\bibitem{grouseisvd}
{\sc L.~Balzano and S.~J. Wright}, {\em On {GROUSE} and incremental {SVD}}, in
  Proceedings of the 5th International Workshop on Computational Advances in
  Multi-Sensor Adaptive Processing ({CAMSAP}), 2013, pp.~1--4.

\bibitem{Basri03}
{\sc R.~Basri and D.~Jacobs}, {\em Lambertian reflectance and linear
  subspaces}, IEEE Transactions on Pattern Analysis and Machine Intelligence,
  25 (2003), pp.~218--233.

\bibitem{CanR07a}
{\sc E.~Cand{\`e}s and J.~Romberg}, {\em Sparsity and incoherence in
  compressive sampling}, Inverse Problems, 23 (2007), p.~969.

\bibitem{Costeira98}
{\sc J.~P. Costeira and T.~Kanade}, {\em A multibody factorization method for
  independently moving objects}, International Journal of Computer Vision, 29
  (1998).

\bibitem{Gross09b}
{\sc D.~Gross}, {\em Recovering low-rank matrices from few coefficients in any
  basis}, IEEE Transactions on Information Theory, 57 (2011), pp.~1548 --1566.

\bibitem{gupchup07}
{\sc J.~Gupchup, R.~Burns, A.~Terzis, and A.~Szalay}, {\em Model-based event
  detection in wireless sensor networks}, in Proceedings of the Workshop on
  Data Sharing and Interoperability (DSI), 2007.

\bibitem{halko2011finding}
{\sc Nathan Halko, Per-Gunnar Martinsson, and Joel~A Tropp}, {\em Finding
  structure with randomness: Probabilistic algorithms for constructing
  approximate matrix decompositions}, SIAM review, 53 (2011), pp.~217--288.

\bibitem{KrimViberg}
{\sc H.~Krim and M.~Viberg}, {\em Two decades of array signal processing
  research: the parametric approach}, Signal Processing Magazine, IEEE, 13
  (1996), pp.~67--94.

\bibitem{lakhina04}
{\sc A.~Lakhina, M.~Crovella, and C.~Diot}, {\em Diagnosing network-wide
  traffic anomalies}, in Proceedings of SIGCOMM, 2004.

\bibitem{Manolakis02}
{\sc D.~Manolakis and G.~Shaw}, {\em Detection algorithms for hyperspectral
  imaging applications}, Signal Processing Magazine, IEEE, 19 (2002),
  pp.~29--43.

\bibitem{NocW06}
{\sc J.~Nocedal and S.~J. Wright}, {\em {Numerical Optimization}}, Springer,
  New York, second~ed., 2006.

\bibitem{papadimitriou05}
{\sc S.~Papadimitriou, J.~Sun, and C.~Faloutsos}, {\em Streaming pattern
  discovery in multiple time-series}, in Proceedings of VLDB Conference, 2005.

\bibitem{RechtImprovedMC09}
{\sc B.~Recht}, {\em A simpler approach to matrix completion}, Journal of
  Machine Learning Research, 12 (2011), pp.~3413--3430.

\bibitem{SteS90}
{\sc G.~W. Stewart and J.~Sun}, {\em Matrix Perturbation Theory}, Computer
  Science and Scientific Computing, Academic Press, New York, 1990.

\bibitem{Tong98multichannel}
{\sc L.~Tong and S.~Perreau}, {\em Multichannel blind identification: From
  subspace to maximum likelihood methods}, Proceedings of the IEEE, 86 (1998),
  pp.~1951--1968.

\bibitem{sysid1996}
{\sc P.~{van Overschee} and B.~{de Moor}}, {\em Subspace Identification for
  Linear Systems}, Kluwer Academic Publishers, Norwell, Massachusetts, 1996.

\bibitem{vandenberghe12}
{\sc L.~Vandenberghe}, {\em Convex optimization techniques in system
  identification}, in Proceedings of the IFAC Symposium on System
  Identification, July 2012.

\bibitem{Wagner96}
{\sc G.~S. Wagner and T.~J. Owens}, {\em Signal detection using multi-channel
  seismic data}, Bulletin of the Seismological Society of America, 86 (1996),
  pp.~221--231.

\end{thebibliography}

\newpage

\end{document}